\title[]{Bound  on the maximal function associated to the law of the iterated logarithms for Bernoulli random fields}
\keywords{random fields, bounded law of the iterated logarithms}
\subjclass[2010]{60G60; 60G10}
\date{\today}
\author{Davide Giraudo}
\numberwithin{equation}{subsection}
\renewcommand{\leq}{\leqslant}
\renewcommand{\geq}{\geqslant}
\newtheorem{Theorem}{Theorem}[section]
\newtheorem{Th\'eor\`eme}{Th\'eor\`eme}[section]
\newtheorem{Proposition}[Theorem]{Proposition}
\newtheorem{Lemma}[Theorem]{Lemma}
\newtheorem{Definition}[Theorem]{Definition}
\newtheorem{D\'efinition}[Th\'eor\`eme]{D\'efinition}
\newtheorem{Corollary}[Theorem]{Corollary}
\theoremstyle{remark}
\tikzstyle{Vertex}=[circle,draw=LimeGreen!80,fill=LimeGreen!8,
\tikzstyle{Node}=[Vertex,draw=RoyalBlue!80,fill=RoyalBlue!8,inner sep=1.5pt]
\tikzstyle{Leaf}=[rectangle,draw=Black!70,fill=Black!16,
\tikzstyle{Edge}=[Maroon!80,cap=round,line width=1pt]
\tikzstyle{Mark1}=[draw=BrickRed!80,fill=BrickRed!8]
\tikzstyle{Mark2}=[draw=BurntOrange!80,fill=BurntOrange!8]
\tikzstyle{EdgeRew}=[->,RedOrange!80,cap=round,thick]
\newcommand{\Fca}{\mathcal{F}}
\newcommand{\Gca}{\mathcal{G}}
\newcommand{\Hca}{\mathcal{H}}
\newcommand \ens[1]{\left\{ #1\right\}}
\newcommand \R{\mathbb R}
\newcommand \N{\mathbb N}
\newcommand \PP{\mathbb P}
\newcommand{\el}{\mathbb L}
\newcommand{\E}[1]{\mathbb E\left[#1\right]}
\newcommand \Z{\mathbb Z}
\newcommand \abs[1]{\left|#1\right|}
\newcommand \eps{\varepsilon}
\newcommand{\f}{\mathcal F}
\newcommand{\pr}[1]{\left(#1\right)}
\newcommand{\norm}[1]{\left\lVert #1 \right\rVert}
\newcommand{\gr}[1]{\bm{#1}}
\newcommand{\gri}{\bm{i}}
\newcommand{\grj}{\bm{j}}
\newcommand{\grn}{\bm{n}}
\newcommand{\gru}{\bm{u}}
\newcommand{\gra}{\bm{a}}
\newcommand{\grb}{\bm{b}}
\newcommand{\imd}{\preccurlyeq}
\newcommand{\smd}{\succcurlyeq }
\begin{document}
\begin{abstract}
We provide a sufficient condition for the bounded law of the iterated logarithms for
strictly stationary random fields expressable as a functional of i.i.d.
random fields when the summation is done on rectangles. The study is done via the control of the moments of an appropriated maximal function.  Applications to functionals of linear random fields, functions of a Gaussian linear 
random field and Volterra process are given.  
\end{abstract} 
\maketitle

\section{Goal of the paper and main results}

 \subsection{Bounded law of the iterated logarithms for random fields} 

Let $d\geq 1$ be an integer and let $\pr{X_{\gri}}_{\gri\in\Z^d}$ be a random 
field and denote the partial sums 
\begin{equation}\label{eq:defn_sommes_part}
 S_{\grn}:=\sum_{\gr{1}\imd\gri \imd\grn}X_{\gri}, \quad  \grn\smd \gr{1} , 
\end{equation}
where $\imd$ denotes the coordinatewise order on 
the elements of $\Z^d$, that is,  for $\gri=\pr{i_q}_{q=1}^d$ 
and $\grj=\pr{j_q}_{q=1}^d$, 
$\gri\imd\grj$ if $i_q\leq j_q$ for all $q\in [d]:=\ens{1,\dots,d}$ (and similarly, 
we write $\gri\smd\grj$ if $i_q\geq j_q$ for all $q\in [d]$) and $\gr{1}=\pr{1,\dots,1}$.
The understanding of the behavior of such partial sums has received attention 
in the past years. When the partial sums \eqref{eq:defn_sommes_part} are 
normalized by $\abs{\grn}=\prod_{i=1}^dn_i$ and the random field $\pr{X_{\gri}}_{\gri \in \Z^d}$ is strictly stationary, functional central
 limit theorems have been established under various dependence structures: martingale differences with respect to the 
 lexicographic order  \cite{MR1875665,MR3473103}, martingale differences \cite{MR542479,MR1629903}, orthomartingale 
 differences and via orthomartingale approximation (see \cite{MR3264437,MR3504508,MR3427925,MR3769664,MR3798239}). 

Concerning the law of the iterated logarithms, it has been shown in \cite{MR0394894}
that for an i.i.d. collection of centered random variables 
$\pr{X_{\gri}}_{\gri\in\Z^d}$, (with $d>1$)  the following equivalence holds:
\begin{multline}\label{eq:LLI_gen_iid}
\E{X_{\gr{0}}^2\pr{L\pr{\abs{X_{\gr{0}}}     }     }^{d-1}/LL
\pr{\abs{X_{\gr{0}}}   }}<+\infty  \\
\Leftrightarrow
\limsup_{\grn \to +\infty}\frac{1}{\sqrt{\abs{\grn}L L\pr{\abs{\grn}    }  }}
S_{\grn}=\norm{X_{\gr{0}}}_2\sqrt d=
-\liminf_{\grn \to +\infty}\frac{1}{\sqrt{\abs{\grn}L L\pr{\abs{\grn}    }  }}
S_{\grn},
\end{multline}
where $L\colon \pr{0,+\infty}\to \R$ is defined by 
$L\pr{x}=\max\ens{\ln x,1}$ and $LL\colon \pr{0,+\infty}\to \R$ 
by $LL\pr{x}=L\circ L\pr{x}$,
and 
for a family of numbers $\pr{a_{\grn}}_{\grn\smd \gr{1}}$,  
$\limsup_{\grn\to +\infty}a_{\grn}:=\lim_{m\to +\infty}\sup_{\grn\smd m\gr{1}}a_{\grn}$ 
and similarly for $\liminf$.

In particular, the moment condition as well as the $\limsup$/$\liminf$ depend on the 
dimension $d$ and the normalization by $ \sqrt{\abs{\grn}  LL\pr{\abs{\grn}}  }$ 
is the best possible among those guaranting the finiteness of the $\limsup$/$\liminf$
in \eqref{eq:LLI_gen_iid}. Some related results have been obtained under other 
dependence structures; see \cite{MR3438530,MR2261554,MR1788546,MR1674964} for instance. 

In general, finding $\limsup_{\grn \to +\infty}\frac{1}{\sqrt{\abs{\grn}L L\pr{\abs{\grn}    }  }}
S_{\grn}$ is a hard task. Another one consists in establishing finiteness of 
$\sup_{\grn\in\N^d}\frac{1}{\sqrt{\abs{\grn}L L\pr{\abs{\grn}    }  }}
\abs{S_{\grn}}$ (where $\N$ denotes the set of positive integers) or integrability of this random variable.
More precisely, we would like to find a sufficient condition on the moments and 
the dependence of a stationary random field such that the quantity

\begin{equation}\label{eq:dnf_supremum_somme_normalisee}
\norm{\sup_{\grn\in\N^d}\frac 1{ \sqrt{\abs{\grn}  LL\pr{\abs{\grn}}  }}
\abs{S_{\grn}}}_p<+\infty, 1< p<2,
\end{equation}
is finite. When $d=1$ and $\pr{X_i}_{i\geq 1}$ is i.i.d. centered, it has been shown in 
\cite{MR0501237} that for $1<p<2$,
\begin{equation}
\norm{\sup_{n\geq 1}\frac 1{ \sqrt{n  LL\pr{n}  }}\abs{\sum_{i=1}^nX_i}}_p\leq c_p\norm{X_1}_2.
\end{equation}

This has been extended to martingales in \cite{MR3322323}, and to higher moments in 
\cite{MR2277288}.

In this paper, we will be concentrated in the following questions. First, 
we would like to give bound on the quantity
involved in \eqref{eq:dnf_supremum_somme_normalisee} in the i.i.d. case. Results 
in the one dimensional case are known, but to the best of
 our knowledge, it seems that no results are available in dimension 
 greater than one. Once this is done for i.i.d. random fields, a similar 
 question can be treated for a strictly stationary random field
 which can be expressed as a functional of finitely many i.i.d. 
 random variables, and then extend this to more general random fields, 
 which are functionals of an i.i.d. collection of random variable indexed by 
 $\Z^d$.

\subsection{Main results}

 We assume that 
$X_{\gr{i}}$ has the form $f\pr{\pr{\eps_{\gr{i}-\gr{j}}}_{\gr{j}\in\Z^d}}$ where 
$f\colon \R^{\Z^d}\to \R$ is measurable (with $\R^{\Z^d}$ endowed with the 
product topology) and $\pr{\eps_{\gr{u}}}_{\gr{u}\in\Z^d}$ is an
 independent identically distributed random field. This class 
 of random fields has been studied during the previous years, including a lot of 
 work in the area of functional central limit theorems \cite{MR2988107,MR3256190, 
 MR3483738}. When the summation on rectangles is considered, moments of order two 
 are sufficient and moment of order $p>2$ can be required for other types of partial 
 sum process.

The condition for the control of the maximal function associated to the law of the iterated logarithms 
involved in \eqref{eq:dnf_supremum_somme_normalisee}
 will require slightly more than finite moments of order $2$. In order to state it,
we define  for $p>1$ and $r\geq 0$, the function $\varphi_{p,r}\colon 
[0,+\infty)$ by $\varphi_{p,r}\pr{x}:=x^p\pr{1+\log\pr{1+x}    }^r$ and 
 denote by 
$\el_{p,r}$ the Orlicz space associated to this function. We define the 
norm $\norm{\cdot}_{p,r}$ of an element $X$ of $\el_{p,r}$ by 
\begin{equation}
 \norm{X}_{p,r}:=\inf\ens{\lambda>0\mid 
 \E{\varphi_{p,r}\pr{\frac X\lambda}  }\leq 1}.
\end{equation}
 
Denote also for $\gri=\pr{i_q}_{q=1}^d$ the quantity $\norm{\gri}_\infty:=\max_{1\leq q\leq d}
\abs{i_q}$ and  $\gr{0}:=\pr{0,\dots,0}$.

\begin{Theorem}\label{thm:cas_fonctionnelle_iid}
Let $\pr{X_{\gr{i}}}_{\gr{i}\in \Z^d}$ be a centered random field such that there exist 
an i.i.d. collection of random variables $\ens{\eps_{\gr{u}},\gr{u}\in\Z^d}$ and 
a measurable function $f\colon \R^{\Z^d}\to \R$ such that $X_{\gr{i}}= 
f\pr{\pr{\eps_{\gr{i}-\gr{j}}}_{\gr{j}\in\Z^d}}$. For all $1< p<2$, the following inequality holds:
\begin{equation}
\norm{ 
\sup_{\grn\in \N^d}\frac 1{ \sqrt{\abs{\grn} L L\pr{\abs{\grn}}   }    }
\abs{\sum_{\gr{1}\imd\gri \imd\grn  }X_{\gr{i}}   }
}_p\leq c_{p,d}\sum_{j\geq 0} \pr{j+1}^{d/2}\norm{X_{\gr{0},j  }}_{2,d-1},
\end{equation}
where $c_{p,d}$ depends only on $p$ and $d$ and 
\begin{equation}\label{eq:definitionde_X0j}
X_{\gr{0},j}=\E{X_{\gr{0}}\mid \sigma\ens{\eps_{\gr{u}},\norm{\gr{u}}_\infty\leq 
 j   }}-\E{X_{\gr{0}}\mid \sigma\ens{\eps_{\gr{u}},\norm{\gr{u}}_\infty\leq 
 j -1  }} , \quad j\geq 1;
\end{equation}
 \begin{equation}\label{eq:definitionde_X00}
 X_{\gr{0},0}:=\E{X_{\gr{0}}\mid \sigma\ens{\eps_{\gr{0}} }}.
 \end{equation}
\end{Theorem}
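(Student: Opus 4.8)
The proof will combine a martingale-type telescoping decomposition of $X_{\gr{0}}$ with a blocking argument that reduces each resulting layer to the maximal inequality for i.i.d. random fields (the base case the paper establishes first, per the roadmap of the introduction).

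First I would decompose $X_{\gr{0}}=\sum_{j\geq 0}X_{\gr{0},j}$, the series converging in $L^2$ by the martingale convergence theorem applied to the filtration $\pr{\sigma\ens{\eps_{\gru}:\norm{\gru}_\infty\leq j}}_{j\geq 0}$, whose union generates the $\sigma$-field with respect to which $X_{\gr{0}}$ is measurable. Writing $X_{\gri,j}$ for the shift of $X_{\gr{0},j}$ to the site $\gri$ (legitimate by strict stationarity) and $S^{(j)}_{\grn}=\sum_{\gr{1}\imd\gri\imd\grn}X_{\gri,j}$, the bound
\[
\norm{\sup_{\grn\in\N^d}\frac{\abs{\sum_{\gr{1}\imd\gri\imd\grn}X_{\gri}}}{\sqrt{\abs{\grn}LL\pr{\abs{\grn}}}}}_p\leq \sum_{j\geq 0}\norm{\sup_{\grn\in\N^d}\frac{\abs{S^{(j)}_{\grn}}}{\sqrt{\abs{\grn}LL\pr{\abs{\grn}}}}}_p
\]
follows from the triangle inequality after bounding the supremum of the sum by the sum of the suprema. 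It therefore suffices to bound the $j$-th summand by $c_{p,d}\pr{j+1}^{d/2}\norm{X_{\gr{0},j}}_{2,d-1}$. The crucial structural fact is that $X_{\gri,j}$ is measurable with respect to $\sigma\ens{\eps_{\gru}:\norm{\gri-\gru}_\infty\leq j}$, so that $X_{\gri,j}$ and $X_{\gri',j}$ are independent as soon as $\norm{\gri-\gri'}_\infty>2j$; in other words the $j$-th layer is a centered, strictly stationary, $2j$-dependent field.

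To exploit this finite range of dependence I would partition $\Z^d$ into cubes of side $2j+1$ and sum the field over each cube, obtaining block variables whose supports in the innovations are the corresponding enlarged cubes. Two blocks whose cube-indices differ by at least $2$ in the sup-norm have disjoint supports, hence are independent; colouring the cubes by the parity of their index in each coordinate thus splits the blocks into $2^d$ subfamilies, each of which, after reindexing, is an i.i.d. random field distributed as the sum $B$ of the field over a single cube. By the triangle inequality for the Orlicz norm and stationarity, $\norm{B}_{2,d-1}\leq \pr{2j+1}^d\norm{X_{\gr{0},j}}_{2,d-1}$. Writing $S^{(j)}_{\grn}$ as the sum of the block sums of the complete cubes contained in $\ens{\gr{1}\imd\gri\imd\grn}$ plus a boundary remainder, I would apply the i.i.d. maximal inequality to each of the $2^d$ colour classes to control the bulk part: for a fixed colour the number of complete cubes in $\ens{\gr{1}\imd\gri\imd\grn}$ is of order $\abs{\grn}/\pr{2j+1}^d$, so the i.i.d. bound naturally appears in a normalization $\sqrt{\abs{\grm}LL\pr{\abs{\grm}}}$ with $\abs{\grm}\asymp\abs{\grn}/\pr{2j+1}^d$.

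The heart of the estimate is then the change of normalization. Since $\abs{\grm}\leq\abs{\grn}$ and $LL$ is nondecreasing, passing from $\sqrt{\abs{\grm}LL\pr{\abs{\grm}}}$ to $\sqrt{\abs{\grn}LL\pr{\abs{\grn}}}$ produces a factor comparable to $\pr{2j+1}^{-d/2}$, which, combined with the per-block bound $\pr{2j+1}^d\norm{X_{\gr{0},j}}_{2,d-1}$ and summation over the $2^d$ colours, yields precisely the target $c_{p,d}\pr{j+1}^{d/2}\norm{X_{\gr{0},j}}_{2,d-1}$; summing over $j$ then finishes the argument. The main obstacle I anticipate is the treatment of the boundary remainder, namely the contribution of the incomplete cubes along the faces of $\ens{\gr{1}\imd\gri\imd\grn}$: these form thinner slabs of thickness $2j+1$ that are not absorbed by the blocking and must be estimated separately, either by an induction on the dimension $d$ or by a direct, slightly cruder maximal estimate. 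Care is also needed to compare $LL\pr{\abs{\grm}}$ with $LL\pr{\abs{\grn}}$ uniformly at small scales, where the monotonicity of $LL$ must be supplemented by a crude bound over the finitely many small $\grn$.
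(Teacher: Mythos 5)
Your overall skeleton coincides with the paper's: the martingale layer decomposition $X_{\gri}=\sum_{j\geq 0}X_{\gri,j}$, the triangle inequality, the observation that the $j$-th layer has dependence range $2j$, and a reduction of each layer to the i.i.d. maximal inequality (Theorem~\ref{thm:iid}), with the weight $\pr{j+1}^{d/2}$ arising from trading a per-block loss $\pr{2j+1}^{d}$ against a normalization gain $\pr{2j+1}^{-d/2}$. That bookkeeping is sound for the bulk. The proof breaks at the boundary remainder, which you flag as the main obstacle but whose difficulty you underestimate. The paper avoids it altogether: instead of partitioning $\Z^d$ into cubes of side $2j+1$ and summing inside them, Lemma~\ref{lem:etape_inter_chamP-m_dep} \emph{decimates}, i.e.\ splits $\sum_{\gr{1}\imd\gri\imd\grn}X_{\gri,j}$ according to the residues $\grb$ of the coordinates modulo $k_0=4j+1$. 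This decomposition is exact for every $\grn$ simultaneously: each residue class contributes a complete rectangular sum of the field $\pr{X_{k_0\gra+\grb,j}}_{\gra\in\Z^d}$, which is i.i.d.\ because $k_0\geq 2j+1$; there are no incomplete cells, and the comparison $c_{k_0\gr{N}}\geq k_0^{d/2}c_{\gr{N}}$ yields the factor $k_0^{-d/2}\cdot k_0^{d}\asymp \pr{j+1}^{d/2}$ directly.

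Here is why your boundary terms cannot be repaired by either of the fixes you suggest. The slab attached to direction $q$ occupies $\ens{i_q: m_q<i_q\leq n_q}$ with $m_q=\pr{2j+1}\lfloor n_q/\pr{2j+1}\rfloor$, so it \emph{slides} with $n_q$; moreover, when $n_q\leq 2j$ the whole rectangle is a slab, and such $\grn$ form an infinite family (the other coordinates are unbounded), so this is not a matter of "finitely many small $\grn$". Controlling the supremum over $\grn$ of the normalized slab sums amounts to controlling $\sup_{k\geq 1}V_k/\sqrt{k\pr{2j+1}}$, where $V_k$ is the (identically distributed, essentially $\pr{d-1}$-dimensional) maximal function of the slab at position $k$. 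A union bound requires $\sum_{k}\PP\ens{V_k>t\sqrt{k\pr{2j+1}}}<+\infty$; but every estimate at your disposal — Theorem~\ref{thm:iid}, or the statement being proved in dimension $d-1$ — controls $V_k$ only in $\el^p$ for $p<2$, i.e.\ tails of order $t^{-p}$, and $\sum_k k^{-p/2}$ diverges for every $p<2$; even genuine weak-$\el^2$ tails produce the divergent series $\sum_k \pr{t^2k}^{-1}$. Hence the "direct, slightly cruder maximal estimate" fails, and the induction on the dimension would have to carry a strictly stronger induction hypothesis (an Orlicz bound, say in $\el_{2,r}$ with $r>1$, on the maximal function itself), which is more than the theorem asserts and would feed back into stronger assumptions on $X_{\gr{0},j}$. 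This is a genuine gap, not a technicality: the exactness (absence of boundary) of the residue decomposition is precisely what your blocking construction lacks.
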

 
Observe that by the martingale convergence theorem, the sequence
 $\pr{\norm{X_{\gr{0},j  }}_{2,d-1}}_{j\geq 1}$ converges to $0$ provided that 
$X_{\gr{0}}\in \mathbb L_{2,d-1}$.

We will now formulate a result in the spirit of Theorem~\ref{thm:cas_fonctionnelle_iid} 
for subsets which can be expressed as a finite union of disjoint rectangles. We first present 
a result for i.i.d. random fields where the summation is done on subset of $\Z^d$ under 
an assumption on the cardinal.
A central limit theorem for linear random fields has been established in \cite{MR2832921} and 
a local central limit theorem in \cite{FPS20}. The case of short and long range dependent random field 
were adressed. In the short range dependent case, that is, $\sum_{\gri\in\Z^d}\abs{a_{\gri}}<\infty$, 
no condition on the size of the summation subset was made. However, one needs a bound on the size 
of the sets for the control of the maximal function, even in the case $d=1$. Indeed, 
taking $\Lambda_n$ pairwise disjoint and and i.i.d. $\pr{X_{\gri}}_{\gri\in\Z^d}$ where $X_{\gri}$ is standard 
normal, the random variable $\sup_{n\geq 1}\frac 1{ \sqrt{\ell_n LL\pr{\ell_n}  }    }
\abs{\sum_{\gri\in \Lambda_n }X_{\gr{i}}   }$ has the same distribution as 
$\sup_{n\geq 1}\frac 1{ \sqrt{  LL\pr{\ell_n}  }    }
\abs{ N_n    }$, where $\pr{N_n}_{n\geq 1}$ is an i.i.d. sequence of standard normal random variables. Therefore, 
by the second Borel-Cantelli lemma, the random variable 
$\sup_{n\geq 1}\frac 1{ \sqrt{\ell_n LL\pr{\ell_n}  }    }
\abs{\sum_{\gri\in \Lambda_n }X_{\gr{i}}   }$ is almost surely finite if and only if for some $M$, 
$\sum_{n\geq 1}\PP\ens{\abs{N_n}>M \sqrt{ LL\pr{\ell_n}  }}$ is finite. Using a lower tail inequality for the distribution function of a normal 
random variable, this imposes the convergence of the series $\sum_{n\geq 1}  
\exp\pr{-M^2L\pr{\ell_n}/2}/\sqrt{ LL\pr{\ell_n}  }$.

\begin{Theorem}\label{thm:linear_process_arbitrary_subsets}
 Let $\pr{\eps_{\gru}}_{\gru\in\Z^d}$ be an i.i.d. centered random field having a finite variance, let 
 $\pr{a_{\gri}}_{\gri\in\Z^d}$ be an absolutely summable sequence of integers and denote by $\pr{X_{\gri}}_{\gri\in\Z^d}$ the 
 linear process given by $X_{\gri}:=\sum_{\grj\in \Z^d}a_{\grj}\eps_{\gri-\grj}$. 
 Let $\pr{\Lambda_n}_{n\geq 1}$ be a 
sequence of finite subsets of $\Z^d$ and denote by $\ell_n$ the cardinal of $\Lambda_n$. Suppose that 
there exists $\delta>0$ such that $\ell_{n+1}\geq \ell_n\geq \exp\pr{n^\delta}$ and $C>0$ such that 
$\sum_{k=1}^n\sqrt{\ell_k/LL\pr{\ell_k}}\leq C\ell_n/LL\pr{\ell_n}$.
For each $p\in\pr{1,2}$, there exists 
 a constant $K_p$ depending  only on $p$ such that 
 \begin{equation}\label{eq:control_fct_max_somme_sur_ens_processus_lineaire}
\norm{ 
\sup_{n\geq 1}\frac 1{ \sqrt{\ell_n LL\pr{\ell_n}  }    }
\abs{\sum_{\gri\in \Lambda_n }X_{\gr{i}}   }
}_p\leq K\pr{p}C^{1/p}\delta^{-1/2}\norm{\eps_{\gr{0}}}_2\sum_{\grj\in\Z^d}\abs{a_{\grj}}.
\end{equation}
\end{Theorem}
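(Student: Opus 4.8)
The plan is to reduce the statement to a maximal inequality for the underlying i.i.d.\ field and then to prove that inequality by a truncation argument. First I would use linearity to write, for every $n$,
\[
\sum_{\gri\in\Lambda_n}X_{\gri}=\sum_{\grj\in\Z^d}a_{\grj}\sum_{\gri\in\Lambda_n}\eps_{\gri-\grj},
\]
the interchange being justified by the absolute summability of $(a_{\grj})$. Writing $S_n(\grj):=\sum_{\gri\in\Lambda_n}\eps_{\gri-\grj}$ and $c_n:=\sqrt{\ell_n LL(\ell_n)}$, the triangle inequality in $\el_p$ gives
\[
\norm{\sup_{n\geq1}\frac1{c_n}\abs{\sum_{\gri\in\Lambda_n}X_{\gri}}}_p\leq\sum_{\grj\in\Z^d}\abs{a_{\grj}}\,\norm{\sup_{n\geq1}\frac1{c_n}\abs{S_n(\grj)}}_p .
\]
Since $(\eps_{\gru})_{\gru\in\Z^d}$ is i.i.d., its law is invariant under the shift $\gru\mapsto\gru-\grj$; hence for each fixed $\grj$ the whole family $(S_n(\grj))_{n\geq1}$ has the same distribution as $(S_n(\gr{0}))_{n\geq1}$, so all these $\el_p$ norms coincide with $A:=\norm{\sup_{n}c_n^{-1}\abs{S_n}}_p$, where $S_n:=\sum_{\gri\in\Lambda_n}\eps_{\gri}$. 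This reduces the theorem to the i.i.d.\ maximal bound $A\leq K(p)C^{1/p}\delta^{-1/2}\norm{\eps_{\gr{0}}}_2$.

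To bound $A$ I would use the layer-cake formula $A^p=\int_0^\infty pt^{p-1}\PP(\sup_n c_n^{-1}\abs{S_n}>t)\,dt$ together with the union bound $\PP(\sup_n c_n^{-1}\abs{S_n}>t)\leq\min\{1,\sum_n\PP(\abs{S_n}>tc_n)\}$, which is the only device available here since the $\Lambda_n$ are arbitrary and, in particular, not nested, so that only the one-dimensional marginals of the $S_n$ are under control. For each $n$ I would split $\eps_{\gru}$ by truncating at a level proportional to $\norm{\eps_{\gr{0}}}_2\sqrt{\ell_n/LL(\ell_n)}$, and accordingly $S_n=S_n'+S_n''$ into its bounded and its (centred) unbounded part. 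For the bounded part a Bernstein-type estimate yields $\PP(\abs{S_n'}>tc_n/2)\leq 2\exp(-\kappa\,t^2 LL(\ell_n)/\norm{\eps_{\gr{0}}}_2^2)$; inserting $LL(\ell_n)\geq\delta\log n$, which is exactly what $\ell_n\geq\exp(n^\delta)$ provides, turns the union sum into $\sum_n n^{-\kappa\delta t^2/\norm{\eps_{\gr{0}}}_2^2}$, summable as soon as $t$ exceeds a fixed multiple of $\norm{\eps_{\gr{0}}}_2/\sqrt\delta$. Integrating $t^{p-1}$ over this range produces a contribution of order $(\norm{\eps_{\gr{0}}}_2/\sqrt\delta)^p$, that is, the factor $\delta^{-1/2}\norm{\eps_{\gr{0}}}_2$.

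The contribution of the unbounded pieces $S_n''$ is the delicate point, and I expect it to be the main obstacle. The naive routes fail here: with only a finite second moment and with arbitrary $\Lambda_n$, neither the union bound combined with Chebyshev's inequality nor the elementary estimate $\sup_n\leq(\sum_n(\cdot)^p)^{1/p}$ can be used, since both lead to the divergent series $\sum_n 1/LL(\ell_n)$ (indeed $\ell_n=\exp(n^\delta)$ satisfies both hypotheses yet makes this series diverge). This is precisely where the second assumption must be used. Setting $b_k:=\sqrt{\ell_k/LL(\ell_k)}=\norm{S_k}_2/(\norm{\eps_{\gr{0}}}_2\sqrt{LL(\ell_k)})$, the hypothesis reads $\sum_{k=1}^n b_k\leq C b_n^2$, so it is exactly a control of the partial sums of the $\el_2$-normalisations of the $S_k$. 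I would therefore organise the large-value contributions by weighting them against the $b_k$ and exploit $\sum_{k\le n}b_k\leq Cb_n^2$ to telescope the resulting series, so that after taking $p$-th roots only $C^{1/p}$ and $\norm{\eps_{\gr{0}}}_2$ survive; the whole difficulty is to perform this estimate so that the dependence on $\eps_{\gr{0}}$ is only through its $\el_2$ norm, which rules out any termwise tail bound and forces the use of the summability hypothesis. Combining the bounded and unbounded parts and reinstating the factor $\sum_{\grj}\abs{a_{\grj}}$ from the reduction step then yields the announced inequality.
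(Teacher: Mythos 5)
Your reduction to the i.i.d.\ case is exactly the paper's first step, and your negative observations (failure of Chebyshev plus union bound, failure of bounding the sup by an $\ell^p$ sum) are correct. But there are two genuine gaps. First, the Bernstein estimate you claim for the truncated part, $\PP\ens{\abs{S_n'}>tc_n/2}\leq 2\exp\pr{-\kappa t^2 LL\pr{\ell_n}/\norm{\eps_{\gr{0}}}_2^2}$, is false in precisely the regime where you invoke it: with a truncation level $\lambda\sqrt{\ell_n/LL\pr{\ell_n}}$ that does not depend on $t$, the Bernstein exponent is of order $t^2LL\pr{\ell_n}/\pr{\norm{\eps_{\gr{0}}}_2^2+\lambda t}$, hence only \emph{linear} in $t$ once $t\gtrsim\norm{\eps_{\gr{0}}}_2$, and summability of $\sum_n n^{-\kappa\delta t}$ then requires $t\gtrsim 1/\delta$, not $1/\sqrt{\delta}$; your route therefore yields at best a factor $\delta^{-1}$ instead of the stated $\delta^{-1/2}$. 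The paper avoids this by letting the truncation level scale with $t$, namely $at\sqrt{\ell_n/LL\pr{\ell_n}}$ with $a=\pr{2-p}h\pr{1}\delta/2$, and applying the Bennett--Freedman inequality (Proposition~\ref{prop:Freedman}); the resulting bound $2\exp\pr{-\tfrac{h\pr{1}}{a}LL\pr{\ell_n}}\leq 2n^{-2/\pr{2-p}}$ is uniform in $t\geq t_0\asymp\delta^{-1/2}$, and the required $t^{-p}$ decay is recovered not from the exponential but from splitting the index at $N=\lfloor t^{2-p}\rfloor+1$: Chebyshev for $n\leq N$ gives $Nt^{-2}\asymp t^{-p}$, while $\sum_{n>N}n^{-2/\pr{2-p}}\asymp N^{-p/\pr{2-p}}\asymp t^{-p}$. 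This splitting device is absent from your scheme and, as explained, the sub-Gaussian decay cannot substitute for it.

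Second, the unbounded part, which you yourself flag as the main obstacle, is left as a plan rather than a proof, and this is the heart of the argument. The missing step is in fact short: by Markov's inequality with \emph{first} moments, the union sum is at most $\tfrac{2}{t}\sum_n b_n\E{\abs{X_{\gr{1}}}\mathbf 1\ens{\abs{X_{\gr{1}}}>atb_n}}$ with $b_n=\sqrt{\ell_n/LL\pr{\ell_n}}$, and since the $b_n$ are nondecreasing, the hypothesis $\sum_{k=1}^n b_k\leq Cb_n^2$ yields the pointwise bound $\sum_n b_n\mathbf 1\ens{\abs{x}>atb_n}\leq\sum_{n\,:\,b_n<\abs{x}/\pr{at}}b_n\leq C\pr{\abs{x}/\pr{at}}^2$, so the expectation is controlled by $C\E{X_{\gr{1}}^2}/\pr{at}^2$ and only the second moment of $\eps_{\gr{0}}$ survives. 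Your idea of weighting against the $b_k$ and telescoping is the right intuition for exactly this computation, but you do not carry it out, and without it (and without the $t$-scaled truncation plus index splitting above) the proof is incomplete. A further small point: the paper converts the weak-type bound $\sup_{t\geq t_0}t^{p'}\PP\ens{\cdot>t}<\infty$, proved for every $p'\in\pr{1,2}$, into the $\el^p$ inequality via weak $\el^p$ spaces (Lemma~\ref{lem:lp_faibles}) applied with $p'>p$, whereas your direct layer-cake integration at exponent $p$ only converges thanks to the quantitative tail decay that your sketch has not established.
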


Note that the condition on $\ell_n$ is satisfied for instance when there exists positive constants 
$c_1$ and $c_2$ such that for each $n\geq 1$, $c_12^n\leq \ell_n\leq c_22^n$, or more generally, 
if $c_1\left\lfloor a^n\right\rfloor\leq \ell_n\leq c_2\left\lfloor a^n\right\rfloor$ for some $a>1$, where 
$\left\lfloor x\right\rfloor$ is the unique integer for which $\left\lfloor x\right\rfloor\leq x<\left\lfloor x\right\rfloor+1$.

We now present a result for arbitrary functionals of a linear random field, with the counterpart 
that the summation sets are assumed to be finite disjoint unions of rectangles.

\begin{Theorem}\label{thm:cas_fonctionnelle_union_rectangles}
 Let $\pr{X_{\gr{i}}}_{\gr{i}\in \Z^d}$ be a centered random field such that there exist 
an i.i.d. collection of random variables $\ens{\eps_{\gr{u}},\gr{u}\in\Z^d}$ and 
a measurable function $f\colon \R^{\Z^d}\to \R$ such that $X_{\gr{i}}= 
f\pr{\pr{\eps_{\gr{i}-\gr{j}}}_{\gr{j}\in\Z^d}}$. Let $\pr{\Gamma_n}_{n\geq 1}$ be a 
sequence of subsets of $\Z^d$ such that for each $n\geq 1$, $\Gamma_n$ is the disjoint union 
of $\Gamma_{n}\pr{w}, 1\leq w\leq J_n$ and 
\begin{equation}
 \Gamma_{n}\pr{w}=\Z^d\cap \prod_{q=1}^d \left[\underline{n}_q\pr{w,n},\overline{n}_q\pr{w,n}\right], \overline{n}_q\pr{w,n},\underline{n}_q\pr{w,n}\in\Z.
\end{equation}
Assume that $\ell_n:=\operatorname{Card}\pr{\Gamma_n}$ satisfies $\ell_{n+1}\geq \ell_n\geq \exp\pr{n^\delta}$,
$\sum_{k=1}^n\sqrt{\ell_k/LL\pr{\ell_k}}\leq C\ell_n/LL\pr{\ell_n}$, where $C>0$ and $\delta>0$ are independent of $n$, and 
$\overline{n}_q\pr{w,n}-\underline{n}_q\pr{w,n}\geq 4$ for each $n\geq 1$, $1\leq w\leq J_n$, $1\leq q\leq d$.

Then for all $1<p<2$, the following inequality holds:
\begin{equation}\label{eq:control_fct_max_somme_sur_ens}
\norm{ 
\sup_{n\geq 1}\frac 1{ \sqrt{\ell_n LL\pr{\ell_n}  }    }
\abs{\sum_{\gri\in \Gamma_n }X_{\gr{i}}   }
}_p
\leq K\pr{p,d,C,\delta}\sum_{j\geq 0} \pr{j+1}^{d}L\pr{j}^{1/p}\norm{X_{\gr{0},j  }}_{2},
\end{equation}
where $X_{\gr{0},j}$ and $X_{\gr{0},0}$ are defined respectively by \eqref{eq:definitionde_X0j} 
and \eqref{eq:definitionde_X00} and $K\pr{p,d,C,\delta}$ depends on $p$, $d$, $C$ and $\delta$.
\end{Theorem}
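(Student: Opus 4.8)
The plan is to reduce Theorem~\ref{thm:cas_fonctionnelle_union_rectangles} to the already-established results by decomposing the functional into its martingale-type increments and treating the summation over disjoint rectangles via the single-rectangle estimate of Theorem~\ref{thm:cas_fonctionnelle_iid} combined with the arbitrary-subset estimate of Theorem~\ref{thm:linear_process_arbitrary_subsets}. Concretely, for each $\grj\in\Z^d$ write $X_{\gri}=\sum_{j\geq 0}X_{\gri,j}$, where $X_{\gri,j}$ is the shift by $\gri$ of the increment $X_{\gr{0},j}$ defined in \eqref{eq:definitionde_X0j}--\eqref{eq:definitionde_X00}. Each field $\pr{X_{\gri,j}}_{\gri\in\Z^d}$ depends only on the innovations $\eps_{\gru}$ with $\norm{\gru-\gri}_\infty\leq j$, so it has finite range of dependence $2j+1$ in each coordinate. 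By the triangle inequality in $\mathbb L_p$ it suffices to bound, for each fixed $j$,
\begin{equation}\label{eq:plan_fixed_j}
\norm{\sup_{n\geq 1}\frac1{\sqrt{\ell_n LL\pr{\ell_n}}}\abs{\sum_{\gri\in\Gamma_n}X_{\gri,j}}}_p
\leq K\pr{p,d,C,\delta}\pr{j+1}^{d}L\pr{j}^{1/p}\norm{X_{\gr{0},j}}_2,
\end{equation}
and then sum over $j\geq 0$.

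To prove \eqref{eq:plan_fixed_j} I would exploit the finite-range structure. Because each $\Gamma_n\pr{w}$ is a rectangle with all side-lengths at least $4$, I would partition the integer lattice into $\pr{2j+1}^d$ (or a comparable number of) residue classes modulo a grid of mesh proportional to $j+1$, so that within each class the variables $X_{\gri,j}$ become independent. On each rectangle $\Gamma_n\pr{w}$ this regrouping is exactly the device that lets one pass from the dependent sum to an i.i.d.\ sum of block-variables, at the cost of the combinatorial factor $\pr{j+1}^d$ coming from the number of classes and the $\pr{j+1}^{d/2}$ appearing in Theorem~\ref{thm:cas_fonctionnelle_iid}. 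The hypothesis that each side-length exceeds $4$ guarantees that the boundary blocks, which are not full, contribute only lower-order terms controlled by the same norm. I would then sum the contributions over the $J_n$ rectangles composing $\Gamma_n$; the key point is that the $\mathbb L_2$-orthogonality of the increments across disjoint rectangles (for fixed $j$, the block-variables in different rectangles that are far apart are independent and centered) keeps the variance additive, so the total is governed by $\sqrt{\ell_n}$ rather than by $J_n\sqrt{\max_w \operatorname{Card}\Gamma_n\pr{w}}$.

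The role of the growth hypotheses $\ell_{n+1}\geq \ell_n\geq\exp\pr{n^\delta}$ and $\sum_{k=1}^n\sqrt{\ell_k/LL\pr{\ell_k}}\leq C\ell_n/LL\pr{\ell_n}$ is to convert an almost-sure/maximal bound over the continuum of rectangle-sums into a bound over the discrete sequence $\pr{\Gamma_n}_{n\geq1}$, precisely as in the proof of Theorem~\ref{thm:linear_process_arbitrary_subsets}. I would first establish a moment bound of the form $\norm{\sum_{\gri\in\Gamma_n}X_{\gri,j}}_p\leq c\,\pr{j+1}^{d/2}\sqrt{\ell_n}\,\norm{X_{\gr{0},j}}_2$ uniformly in $n$, then use the superexponential lower bound on $\ell_n$ to produce the extra logarithmic factor: summing $p$-th moments against the normalization $\pr{\ell_n LL\pr{\ell_n}}^{-p/2}$ and invoking the summability condition yields a geometric-type series whose sum contributes the $L\pr{j}^{1/p}$ factor after accounting for the $\exp\pr{n^\delta}$ spacing (the number of indices $n$ with $\ell_n$ in a given dyadic range is of order $L\pr{j}^{1/\delta}$-controlled). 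Passing from the fixed-$j$ bound \eqref{eq:plan_fixed_j} to the final inequality is then the $\mathbb L_p$ triangle inequality.

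The main obstacle I anticipate is the interaction between the block-decoupling (which produces the combinatorial factor $\pr{j+1}^d$) and the summation over the $J_n$ disjoint rectangles while keeping the dependence on $\ell_n$, and not on $J_n$ or on the individual rectangle sizes. The delicate point is that the number $J_n$ of rectangles and their shapes are unconstrained beyond the side-length lower bound, so a naive union bound over $w$ would lose a factor of $J_n$; the argument must instead assemble the block-variables across all rectangles into a single i.i.d.\ family per residue class so that Theorem~\ref{thm:cas_fonctionnelle_iid} or its one-dimensional analogue applies to the aggregated sum. Handling the incomplete boundary blocks uniformly in the shape of $\Gamma_n\pr{w}$, and verifying that the loss is absorbed into the $\pr{j+1}^d$ prefactor rather than growing with $J_n$, is where I expect the technical heart of the proof to lie.
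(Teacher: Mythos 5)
Your skeleton coincides with the paper's: decompose $X_{\gri}=X_{\gri,0}+\sum_{j\geq1}X_{\gri,j}$, observe that for fixed $j$ the field $\pr{X_{\gri,j}}_{\gri\in\Z^d}$ has finite dependence range, decimate $\Gamma_n$ along residue classes modulo a mesh of order $j$ so that each class carries a single i.i.d.\ family aggregated across all $J_n$ rectangles (this is indeed how a loss of $J_n$ is avoided), and use the side-length hypothesis to compare cardinalities. The genuine gap is in the one step that is the heart of the matter: how you control the maximal function over $n$ for fixed $j$. You propose the uniform moment bound $\norm{\sum_{\gri\in\Gamma_n}X_{\gri,j}}_p\leq c\pr{j+1}^{d/2}\sqrt{\ell_n}\norm{X_{\gr{0},j}}_2$ and then ``summing $p$-th moments against the normalization $\pr{\ell_n LL\pr{\ell_n}}^{-p/2}$'', expecting a geometric-type series. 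This cannot work. Writing $Z_{n,j}:=\pr{\ell_n LL\pr{\ell_n}}^{-1/2}\sum_{\gri\in\Gamma_n}X_{\gri,j}$, your scheme gives
\begin{equation*}
\E{\sup_{n\geq1}\abs{Z_{n,j}}^p}\leq\sum_{n\geq1}\E{\abs{Z_{n,j}}^p}
\lesssim\pr{j+1}^{pd/2}\norm{X_{\gr{0},j}}_2^p\sum_{n\geq1}\frac1{\pr{LL\pr{\ell_n}}^{p/2}},
\end{equation*}
and the hypotheses permit $\ell_n$ of order $\exp\pr{n^\delta}$, for which $LL\pr{\ell_n}=\max\pr{\delta\ln n,1}$; the last series then diverges for every $p<2$ (even doubly exponential $\ell_n$ gives $\sum_n n^{-p/2}=+\infty$). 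A union bound with Chebyshev at the level of tails fails identically, since $\sum_n 1/LL\pr{\ell_n}=+\infty$. This is not a repairable detail: fixed-order moment bounds are structurally insufficient for an $LL$-normalization. What is needed, for each $n$, is a sub-Gaussian tail after truncation — Freedman's inequality (Proposition~\ref{prop:Freedman}) — giving $\PP\ens{\abs{Z_{n,j}}>t}\lesssim\exp\pr{-c\,LL\pr{\ell_n}}\leq n^{-c\delta}$ with $c\delta>1$ for a suitable truncation level, together with a separate Chebyshev bound for the first $O\pr{t^{2-p}}$ indices. That is exactly the content of Theorem~\ref{thm:linear_process_arbitrary_subsets}, and the paper's proof never re-derives a maximal inequality: it applies Theorem~\ref{thm:linear_process_arbitrary_subsets} as a black box, in the i.i.d.\ case $a_{\gr{0}}=1$ and $a_{\gri}=0$ otherwise, to each decimated set.

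Two further corrections would be needed to make your outline complete. First, you invoke Theorem~\ref{thm:cas_fonctionnelle_iid} for the aggregated residue-class sums, but that theorem bounds suprema of rectangular partial sums $\sum_{\gr{1}\imd\gri\imd\grn}$ only; it says nothing about sums over the arbitrary finite sets produced by decimation, so it is the wrong black box. Second, once Theorem~\ref{thm:linear_process_arbitrary_subsets} is substituted, the real technical work — untouched in your proposal — is to verify its hypotheses for the sets $\Gamma_n^{\gra,j}:=\ens{\gri\in\Z^d\,:\,\pr{4j+2}\gri+\gra\in\Gamma_n}$ with constants explicit in $j$: the rectangle structure and the condition $\overline{n}_q\pr{w,n}-\underline{n}_q\pr{w,n}\geq4$ yield
\begin{equation*}
\pr{4j+2}^{-d}4^{-d}\,\ell_n\leq\ell_n^{\gra,j}\leq\pr{4j+2}^{-d}\,\ell_n,
\end{equation*}
from which admissible parameters $\delta_j$ and $C_j\lesssim C\,4^d\,L\pr{\pr{4j+1}^d}$ are extracted. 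The weight $\pr{j+1}^dL\pr{j}^{1/p}$ in \eqref{eq:control_fct_max_somme_sur_ens} is then pure bookkeeping: $\pr{4j+2}^d$ residue classes, times the renormalization $\pr{4j+2}^{-d/2}$ from replacing $\ell_n$ by $\ell_n^{\gra,j}$, times the constant $K\pr{p}C_j^{1/p}\delta_j^{-1/2}\asymp L\pr{j}^{1/p}\pr{j+1}^{d/2}$ of Theorem~\ref{thm:linear_process_arbitrary_subsets} — not, as you suggest, a count of indices $n$ in dyadic ranges.
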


Notice that unlike the case of the supremum on rectangles addressed in Theorem~\ref{thm:cas_fonctionnelle_iid}, 
the $\mathbb L^2$-norm of the random variables $X_{\gr{0},j  }$ is involded instead of the 
norm $\norm{\cdot}_{2,d-1}$. However, we need a condition on the size of the sets. Moreover, the weight term 
is $\pr{j+1}^{d}L\pr{j}^{1/p}$ which is stronger than the term $\pr{j+1}^{d/2}$ appearing in 
Theorem~\ref{thm:cas_fonctionnelle_iid}.

The terms $\norm{X_{\gr{0},j  }}_{2,d-1}$ and $\norm{X_{\gr{0},j  }}_{2}$  can be estimated by the so-called
physical measure of dependence, introduced in \cite{MR2172215}. 

\begin{Definition}
Let $\pr{X_{\gri}}_{\gri\in \Z^d}$ be a strictly stationary random field which can 
be expressed as a functional of an i.i.d. random field, that is, there exist an i.i.d. random field 
$\pr{\eps_{\gr{u}}}_{\gru\in \Z^d}$ and a measurable function $f\colon \R^{\Z^d}\to\R$ such that 
$X_{\gri}=f\pr{ \pr{\eps_{\gri -\gr{u}}}_{\gru\in \Z^d}   } $. Let $\eps'_{\gr{0}}$ be a random variable 
independent of $\pr{\eps_{\gr{u}}}_{\gru\in \Z^d}$. Denote by $\eps^*_{\gru}$ the random variable 
$\eps_{\gru}$ if $\gru\neq \gr{0}$ and $\eps'_{\gr{0}}$ if $\gru=\gr{0}$. 

For $r\geq 0$, we define the physical measure of dependence of $\pr{X_{\gri}}_{\gri\in \Z^d}$ by 
\begin{equation}\label{eq:definition_phy_dep_meas}
\delta_{2,r}\pr{\gri}:=\norm{f\pr{ \pr{\eps_{\gri -\gr{u}}}_{\gru\in \Z^d}   }-
f\pr{ \pr{\eps^*_{\gri -\gr{u}}}_{\gru\in \Z^d}   }        }_{2,r}.
\end{equation}

\end{Definition}

When the Orlicz norm $\norm{\cdot}_{2,r}$ is replaced by the $\el^p$-norm, there are various 
examples of random fields where the measure of dependence is estimated (see Section~2 in 
\cite{MR3256190} and \cite{MR2988107}). By using an appropriated version of Burkholder and Rosenthal's inequality 
in these spaces (see for instance Corollary~\ref{cor:Burlholder_Orlicz}), we can also 
estimate $\delta_r\pr{\gri}$. This approach also allows to bound $\norm{X_{\gr{0},j  }}_{2,d-1}$ 
by the coefficients $\delta_{d-1}\pr{\gri}$, like in the proof of Corollary~1 in \cite{MR3963881}.
This leads to the following result.

\begin{Corollary}\label{cor:coeff_phys_dep}
Let $\pr{X_{\gr{i}}}_{\gr{i}\in \Z^d}$ be a centered random field such that there exist 
an i.i.d. collection of random variables $\ens{\eps_{\gr{u}},\gr{u}\in\Z^d}$ and 
a measurable function $f\colon \R^{\Z^d}\to \R$ such that $X_{\gr{i}}= 
f\pr{\pr{\eps_{\gr{i}-\gr{j}}}_{\gr{j}\in\Z^d}}$. For all $1< p<2$, the following inequality holds:
\begin{equation}
\norm{ 
\sup_{\grn\in \N^d}\frac 1{ \sqrt{\abs{\grn} L L\pr{\abs{\grn}}   }    }
\abs{\sum_{\gr{1}\imd\gri \imd\grn  }X_{\gr{i}}   }
}_p\leq c_{p,d}\sum_{j\geq 0} \pr{j+1}^{d/2}\sqrt{ \sum_{\gri\in \Z^d, \norm{\gri}_\infty=j}
\delta_{2,d-1}\pr{\gri}^2
    } ,
\end{equation}
where $c_{p,d}$ depends only on $p$ and $d$ and $\delta_{2,d-1}\pr{\gri}$ is 
defined by \eqref{eq:definition_phy_dep_meas}.
\end{Corollary}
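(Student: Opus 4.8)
The plan is to derive Corollary~\ref{cor:coeff_phys_dep} directly from Theorem~\ref{thm:cas_fonctionnelle_iid} by bounding each Orlicz norm $\norm{X_{\gr{0},j}}_{2,d-1}$ appearing on the right-hand side of that theorem in terms of the physical dependence coefficients $\delta_{2,d-1}(\gri)$. Indeed, once we show
\begin{equation}
\norm{X_{\gr{0},j}}_{2,d-1}\leq c_d\sqrt{\sum_{\gri\in\Z^d,\,\norm{\gri}_\infty=j}\delta_{2,d-1}(\gri)^2},
\end{equation}
plugging this into the conclusion of Theorem~\ref{thm:cas_fonctionnelle_iid} and absorbing the constant $c_d$ into $c_{p,d}$ yields the Corollary. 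So the entire content is the martingale-difference estimate relating $\norm{\cdot}_{2,d-1}$ of the projection increment to the dependence coefficients, exactly as announced in the paragraph preceding the statement (\emph{``like in the proof of Corollary~1 in \cite{MR3963881}''}).

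First I would fix $j\geq 1$ and recall the definition \eqref{eq:definitionde_X0j}, namely $X_{\gr{0},j}=\E{X_{\gr{0}}\mid\Fca_j}-\E{X_{\gr{0}}\mid\Fca_{j-1}}$ where $\Fca_j:=\sigma\ens{\eps_{\gru},\norm{\gru}_\infty\leq j}$. The key observation is that $X_{\gr{0},j}$ is a sum of martingale-difference contributions indexed by the ``new'' coordinates entering at level $j$, i.e. those $\gru$ with $\norm{\gru}_\infty=j$. The standard device is to enumerate the finitely many sites $\gru$ on the shell $\ens{\norm{\gru}_\infty=j}$ in some fixed order and telescope the conditional expectation of $X_{\gr{0}}$ as one reveals these coordinates one at a time, producing a martingale-difference decomposition of $X_{\gr{0},j}$ whose individual terms are controlled by the coupling variables $\delta_{2,d-1}(\gri)$. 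Concretely, replacing a single $\eps_{\gru}$ by its independent copy $\eps'_{\gru}$ and using stationarity to recenter, each increment is dominated in $\norm{\cdot}_{2,d-1}$ by $\delta_{2,d-1}(\gri)$ for the corresponding $\gri$ on the shell.

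To combine the shell contributions I would invoke the version of the Burkholder–Rosenthal inequality valid in the Orlicz space $\el_{2,d-1}$ alluded to in the text (Corollary~\ref{cor:Burlholder_Orlicz}): for a martingale-difference sequence the $\norm{\cdot}_{2,d-1}$-norm of the sum is bounded, up to a constant depending only on $d$, by the square root of the sum of squared $\norm{\cdot}_{2,d-1}$-norms of the increments. Applying this to the telescoping decomposition over the shell $\ens{\norm{\gru}_\infty=j}$ gives precisely
\begin{equation}
\norm{X_{\gr{0},j}}_{2,d-1}\leq c_d\pr{\sum_{\gri\in\Z^d,\,\norm{\gri}_\infty=j}\delta_{2,d-1}(\gri)^2}^{1/2},
\end{equation}
with the case $j=0$ handled identically using \eqref{eq:definitionde_X00}. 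I expect the main obstacle to be purely technical: verifying that Burkholder's inequality holds with the correct constant in the non-$\el^p$ Orlicz space $\el_{2,d-1}$ and that the Orlicz norm behaves well under the convexity/martingale arguments, rather than any conceptual difficulty. Once Corollary~\ref{cor:Burlholder_Orlicz} is granted in the stated form, the remaining bookkeeping—matching each coupling increment with its lattice site on the correct shell and summing—is routine, and the final constant $c_{p,d}$ absorbs both the Theorem~\ref{thm:cas_fonctionnelle_iid} constant and the dimensional constant $c_d$.
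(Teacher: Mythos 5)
Your proposal is correct and takes essentially the same route as the paper: the paper's (sketched) proof of Corollary~\ref{cor:coeff_phys_dep} is exactly to apply Theorem~\ref{thm:cas_fonctionnelle_iid} and bound each $\norm{X_{\gr{0},j}}_{2,d-1}$ by $c_d\pr{\sum_{\gri\in\Z^d,\,\norm{\gri}_\infty=j}\delta_{2,d-1}\pr{\gri}^2}^{1/2}$ via the shell-by-shell telescoping martingale decomposition and the Orlicz-space Burkholder inequality of Corollary~\ref{cor:Burlholder_Orlicz}, following Corollary~1 of \cite{MR3963881}. Your description of the increment bound (replace one innovation by an independent copy, apply conditional Jensen to the resulting conditional expectation of the difference, and use stationarity to identify the coupling norm with $\delta_{2,d-1}$ at the reflected shell site) is precisely the mechanism the paper relies on, including the analogous treatment of the $j=0$ term.
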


Let us also mention the following consequence of Theorem~\ref{thm:cas_fonctionnelle_union_rectangles}.

\begin{Corollary}\label{cor:coeff_phys_dep_union_of_rectangles}
Let $\pr{X_{\gr{i}}}_{\gr{i}\in \Z^d}$ be a centered random field such that there exist 
an i.i.d. collection of random variables $\ens{\eps_{\gr{u}},\gr{u}\in\Z^d}$ and 
a measurable function $f\colon \R^{\Z^d}\to \R$ such that $X_{\gr{i}}= 
f\pr{\pr{\eps_{\gr{i}-\gr{j}}}_{\gr{j}\in\Z^d}}$. Let $\pr{\Gamma_n}_{n\geq 1}$ be a 
sequence of subsets of $\Z^d$ such that for each $n\geq 1$, $\Gamma_n$ is the disjoint union 
of $\Gamma_{n}\pr{w}, 1\leq w\leq J_n$ and 
\begin{equation}
 \Gamma_{n}\pr{w}=\Z^d\cap \prod_{q=1}^d \left[\underline{n}_q\pr{w,n},\overline{n}_q\pr{w,n}\right].
\end{equation}
Assume that $\ell_n:=\operatorname{Card}\pr{\Gamma_n}$ satisfies $\ell_{n+1}\geq \ell_n\geq \exp\pr{n^\delta}$,
$\sum_{k=1}^n\sqrt{\ell_k/LL\pr{\ell_k}}\leq C\ell_n/LL\pr{\ell_n}$, where $C>0$ and $\delta>0$ are independent of $n$, and 
$\overline{n}_q\pr{w,n}-\underline{n}_q\pr{w,n}\geq 4$ for each $n\geq 1$, $1\leq w\leq J_n$, $1\leq q\leq d$.

Then for all $1<p<2$, the following inequality holds:
\begin{multline}\label{eq:control_fct_max_somme_sur_ens_phys_dep}
\norm{ 
\sup_{n\geq 1}\frac 1{ \sqrt{\ell_n LL\pr{\ell_n}  }    }
\abs{\sum_{\gri\in \Gamma_n }X_{\gr{i}}   }
}_p\\
\leq K\pr{p,d,C,\delta}\sum_{j\geq 0} \pr{j+1}^{d}L\pr{j}^{1/p}\sqrt{ \sum_{\gri\in \Z^d, \norm{\gri}_\infty=j}
\delta_{2,0}\pr{\gri}^2
    },
\end{multline}
where $\delta_{2,0}\pr{\gri}$ is 
defined by \eqref{eq:definition_phy_dep_meas}.
\end{Corollary}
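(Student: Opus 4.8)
The plan is to derive the corollary directly from Theorem~\ref{thm:cas_fonctionnelle_union_rectangles}: since that theorem already controls the maximal function and carries out the summation over $j$, the only task left is the single-shell estimate
\[
\norm{X_{\gr{0},j}}_2 \leq \sqrt{\sum_{\gri\in\Z^d,\ \norm{\gri}_\infty=j}\delta_{2,0}\pr{\gri}^2},
\]
after which one substitutes this bound into \eqref{eq:control_fct_max_somme_sur_ens}. I would stress at the outset that here the relevant norm is the plain $\el^2$-norm (indeed $\varphi_{2,0}\pr{x}=x^2$), so that the Hilbert-space orthogonality of martingale differences is available; this is precisely what makes the present argument lighter than the one behind Corollary~\ref{cor:coeff_phys_dep}, where the Orlicz norm $\norm{\cdot}_{2,d-1}$ forces the use of a Burkholder--Rosenthal inequality (Corollary~\ref{cor:Burlholder_Orlicz}).

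For the shell estimate, write $\Fca_k:=\sigma\ens{\eps_{\gru},\norm{\gru}_\infty\leq k}$, so that $X_{\gr{0},j}=\E{X_{\gr{0}}\mid\Fca_j}-\E{X_{\gr{0}}\mid\Fca_{j-1}}$ is the increment of the onion-peeling martingale. Enumerate the finite shell $\ens{\gru\in\Z^d : \norm{\gru}_\infty=j}=\ens{\gru_1,\dots,\gru_m}$ in any fixed order and interpolate between $\Fca_{j-1}$ and $\Fca_j$ by the sigma-fields $\Hca_0:=\Fca_{j-1}$ and $\Hca_t:=\sigma\pr{\Hca_{t-1},\eps_{\gru_t}}$, so that $\Hca_m=\Fca_j$. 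Telescoping gives $X_{\gr{0},j}=\sum_{t=1}^m D_t$ with $D_t:=\E{X_{\gr{0}}\mid\Hca_t}-\E{X_{\gr{0}}\mid\Hca_{t-1}}$, and since the $D_t$ lie in the mutually orthogonal subspaces $\el^2\pr{\Hca_t}\ominus\el^2\pr{\Hca_{t-1}}$, Pythagoras yields $\norm{X_{\gr{0},j}}_2^2=\sum_{t=1}^m\norm{D_t}_2^2$.

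It then remains to show $\norm{D_t}_2\leq\delta_{2,0}\pr{-\gru_t}$. I would introduce $X_{\gr{0}}^{\pr{\gru_t}}$, the random variable obtained from $X_{\gr{0}}=f\pr{\pr{\eps_{-\gru}}_{\gru\in\Z^d}}$ by replacing the innovation $\eps_{\gru_t}$ by an independent copy; because $X_{\gr{0}}^{\pr{\gru_t}}$ no longer depends on $\eps_{\gru_t}$ while being independent of it, conditioning on $\Hca_t$ and on $\Hca_{t-1}$ produce the same result, so its projection onto $\el^2\pr{\Hca_t}\ominus\el^2\pr{\Hca_{t-1}}$ vanishes. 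Consequently $D_t$ equals the $\el^2$-orthogonal projection of $X_{\gr{0}}-X_{\gr{0}}^{\pr{\gru_t}}$ onto that same subspace, whence $\norm{D_t}_2\leq\norm{X_{\gr{0}}-X_{\gr{0}}^{\pr{\gru_t}}}_2$; by strict stationarity the latter equals $\delta_{2,0}\pr{-\gru_t}$. Summing the squares over the shell and using that $\gru\mapsto-\gru$ is a bijection of $\ens{\norm{\gru}_\infty=j}$ gives the claimed estimate, the term $j=0$ being treated identically with $\Hca_0$ the trivial sigma-field and $\E{X_{\gr{0}}}=0$.

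The only genuinely delicate point is the vanishing of the projection of the perturbed variable, together with the stationarity bookkeeping that matches $\norm{X_{\gr{0}}-X_{\gr{0}}^{\pr{\gru_t}}}_2$ to the coefficient $\delta_{2,0}$ indexed by a shell point; everything else is routine Hilbert-space manipulation followed by the direct invocation of Theorem~\ref{thm:cas_fonctionnelle_union_rectangles}.
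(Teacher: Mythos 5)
Your proposal is correct and follows essentially the route the paper intends: invoke Theorem~\ref{thm:cas_fonctionnelle_union_rectangles} and then bound $\norm{X_{\gr{0},j}}_{2}$ by the shell sum of physical dependence coefficients, exactly the estimate the paper attributes to the argument of Corollary~1 of \cite{MR3963881}, where in the plain $\el^2$ setting the Burkholder--Rosenthal step degenerates into the orthogonality-of-martingale-increments argument you give (a simplification the paper itself points out in its treatment of Corollary~\ref{cor:LLI_Volterra_sum_on_rect}). Your telescoping over the shell, the vanishing projection of the perturbed variable, and the stationarity identification $\norm{X_{\gr{0}}-X_{\gr{0}}^{\pr{\gru_t}}}_2=\delta_{2,0}\pr{-\gru_t}$ are all sound.
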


\section{Applications}

In some particular cases of functionals of independent random fields, one can  
estimate $\norm{X_{\gr{0},j  }}_{2,d-1}$ in terms of the parameters of the 
considered model. We will focus on the case of linear and Volterra random fields and 
functions of a Gaussian linear random field.

\subsection{Functional of linear random fields}

We say that the random fields $\pr{X_{\gri}}_{\gri\in\Z^d}$ is a linear 
random field if there exists an i.i.d. centered random field $\pr{\eps_{\gri}}_{\gri
\in\Z^d}$ of square integrable random variables and a family of real numbers 
$\pr{a_{\gri}}_{\gri\in\Z^d}$ such that $\sum_{\gri\in\Z^d}a_{\gri}^2$ is finite and 
\begin{equation}\label{eq:dfn_linear_random_field}
X_{\grj}=\sum_{\gri\in\Z^d}a_{\gri}\eps_{\grj-\gri} \mbox{ a.s.}.
\end{equation}

We will give a sufficient condition for the control of the maximal function of 
a random field which can be expressed as a Lipschitz function of a 
linear random field.

\begin{Corollary}\label{cor:LLI_linear}
Let $\pr{X_{\gri}}_{\gri\in\Z^d}$ be a linear random field defined by 
\eqref{eq:dfn_linear_random_field}, where  $\pr{\eps_{\gri}}_{\gri
\in\Z^d}$ is i.i.d.. 
Let $g\colon \R\to \R$ be a $\gamma$-Hölder continuous function where $0<\gamma\leq 1$
and assume that $\eps_{\gr{0}}\in \mathbb L_{2/\gamma,d-1}$ and $\E{g\pr{X_{\gr{0}}}}=0$. Then for  
$1<p<2$, 
\begin{multline}
\norm{ 
\sup_{\grn\in \N^d}\frac 1{ \sqrt{\abs{\grn} L L\pr{\abs{\grn}}
}    }
\abs{\sum_{\gr{1}\imd\gri \imd\grn  }g\pr{X_{\gr{i}}   }}
}_p\\
\leq c_{p,d,\gamma}\sum_{j=0}^{+\infty} \pr{j+1}^{d/2} \pr{
\sum_{\gri\in\Z^d,\norm{\gri}_\infty=j}  \abs{a_{\gri }}^{2\gamma}}^{1/2}\norm{\eps_{\gr{0}}}_{2\gamma,d-1}  ,
\end{multline}
where $c_{p,d}$ depends only on $p$, $d$ and $\gamma$.
\end{Corollary}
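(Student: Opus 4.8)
The plan is to deduce Corollary~\ref{cor:LLI_linear} from Corollary~\ref{cor:coeff_phys_dep} applied to the random field $Y_{\gri}:=g\pr{X_{\gri}}$. First I would check that $\pr{Y_{\gri}}_{\gri\in\Z^d}$ is a centered functional of the \emph{same} i.i.d. field $\pr{\eps_{\gru}}_{\gru\in\Z^d}$: since $X_{\gri}=\sum_{\grj\in\Z^d}a_{\grj}\eps_{\gri-\grj}$ converges in $\el^2$ (because $\sum_{\grj}a_{\grj}^2<\infty$ and $\eps_{\gr{0}}$ has a finite variance), the map $\pr{x_{\gru}}_{\gru}\mapsto g\pr{\sum_{\gru}a_{\gru}x_{\gru}}$ is measurable on its domain of convergence, so $Y_{\gri}=\tilde g\pr{\pr{\eps_{\gri-\gru}}_{\gru\in\Z^d}}$ for a measurable $\tilde g$, and $\E{Y_{\gr{0}}}=\E{g\pr{X_{\gr{0}}}}=0$ by assumption. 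Consequently Corollary~\ref{cor:coeff_phys_dep} yields
\begin{equation*}
\norm{\sup_{\grn\in\N^d}\frac 1{\sqrt{\abs{\grn}LL\pr{\abs{\grn}}}}\abs{\sum_{\gr{1}\imd\gri\imd\grn}Y_{\gri}}}_p\leq c_{p,d}\sum_{j\geq 0}\pr{j+1}^{d/2}\sqrt{\sum_{\gri\in\Z^d,\norm{\gri}_\infty=j}\delta_{2,d-1}\pr{\gri}^2},
\end{equation*}
so the whole problem reduces to estimating the physical dependence coefficients $\delta_{2,d-1}\pr{\gri}$ of $\pr{Y_{\gri}}$.

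Then I would compute these coefficients using the linear structure. By definition $\delta_{2,d-1}\pr{\gri}=\norm{g\pr{X_{\gri}}-g\pr{X^*_{\gri}}}_{2,d-1}$, where $X^*_{\gri}=\sum_{\gru}a_{\gru}\eps^*_{\gri-\gru}$ and $\eps^*$ coincides with $\eps$ except that $\eps_{\gr{0}}$ is replaced by the independent copy $\eps'_{\gr{0}}$. Because the filter is linear, only the index $\gru=\gri$ contributes to the difference, giving $X_{\gri}-X^*_{\gri}=a_{\gri}\pr{\eps_{\gr{0}}-\eps'_{\gr{0}}}$. The $\gamma$-Hölder continuity of $g$ then yields the pointwise bound $\abs{g\pr{X_{\gri}}-g\pr{X^*_{\gri}}}\leq C_g\abs{a_{\gri}}^\gamma\abs{\eps_{\gr{0}}-\eps'_{\gr{0}}}^\gamma$, where $C_g$ is the Hölder seminorm, whence
\begin{equation*}
\delta_{2,d-1}\pr{\gri}\leq C_g\abs{a_{\gri}}^\gamma\norm{\abs{\eps_{\gr{0}}-\eps'_{\gr{0}}}^\gamma}_{2,d-1}.
\end{equation*}

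The remaining, and most delicate, point is to convert the Orlicz norm of the power $\abs{\eps_{\gr{0}}-\eps'_{\gr{0}}}^\gamma$ into a norm of $\eps_{\gr{0}}$ itself. Here I would first use the subadditivity of $t\mapsto t^\gamma$ for $\gamma\in(0,1]$ together with the triangle inequality for $\norm{\cdot}_{2,d-1}$ (a genuine norm since the leading exponent $2\geq 1$) to get $\norm{\abs{\eps_{\gr{0}}-\eps'_{\gr{0}}}^\gamma}_{2,d-1}\leq 2\norm{\abs{\eps_{\gr{0}}}^\gamma}_{2,d-1}$; this sidesteps the fact that $\norm{\cdot}_{2\gamma,d-1}$ is only a quasi-norm when $2\gamma<1$. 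Then I would prove the elementary comparison $\norm{W^\gamma}_{2,d-1}\leq c_{d,\gamma}\norm{W}_{2\gamma,d-1}^\gamma$ for a nonnegative $W$, which rests on the pointwise inequality $\varphi_{2,d-1}\pr{w^\gamma/\mu^\gamma}\leq \pr{1+\log 2}^{d-1}\varphi_{2\gamma,d-1}\pr{w/\mu}$, itself coming from $1+x^\gamma\leq 2\pr{1+x}$ and hence $1+\log\pr{1+x^\gamma}\leq\pr{1+\log 2}\pr{1+\log\pr{1+x}}$ for $x\geq 0$; this is exactly where the hypothesis $\eps_{\gr{0}}\in\el_{2\gamma,d-1}$ (implied by $\eps_{\gr{0}}\in\el_{2/\gamma,d-1}$, since $2\gamma\leq 2/\gamma$) is used, and it produces the factor $\norm{\eps_{\gr{0}}}_{2\gamma,d-1}^\gamma$.

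Finally I would assemble the pieces: substituting the bound on $\delta_{2,d-1}\pr{\gri}$ into the right-hand side of Corollary~\ref{cor:coeff_phys_dep} and factoring out the $\gri$-independent quantity gives $\sqrt{\sum_{\norm{\gri}_\infty=j}\delta_{2,d-1}\pr{\gri}^2}\leq C_g\norm{\abs{\eps_{\gr{0}}-\eps'_{\gr{0}}}^\gamma}_{2,d-1}\pr{\sum_{\norm{\gri}_\infty=j}\abs{a_{\gri}}^{2\gamma}}^{1/2}$, and after the Orlicz conversion all model-independent constants (the Hölder constant $C_g$, the numerical factors, and $c_{p,d}$) are collected into $c_{p,d,\gamma}$, yielding the claimed inequality. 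I expect the main obstacle to be the Orlicz comparison $\norm{W^\gamma}_{2,d-1}\leq c_{d,\gamma}\norm{W}_{2\gamma,d-1}^\gamma$ and, to a lesser extent, the bookkeeping needed to handle the logarithmic weight $\pr{1+\log\pr{1+\cdot}}^{d-1}$ uniformly in the regime $2\gamma<1$ where $\norm{\cdot}_{2\gamma,d-1}$ fails to be a norm.
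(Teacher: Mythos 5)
Your proposal is correct and follows essentially the same route as the paper's proof: reduce to Corollary~\ref{cor:coeff_phys_dep} applied to $g\pr{X_{\gri}}$, observe that by linearity only the index $\gru=\gri$ contributes so that $X_{\gri}-X^*_{\gri}=a_{\gri}\pr{\eps_{\gr{0}}-\eps'_{\gr{0}}}$, invoke the $\gamma$-Hölder property, and pass from $\norm{\cdot}_{2,d-1}$ of the $\gamma$-th power to $\norm{\cdot}_{2\gamma,d-1}$ via the pointwise comparison $\varphi_{2,d-1}\pr{t^\gamma}\leq c_{d,\gamma}\varphi_{2\gamma,d-1}\pr{t}$ (your preliminary use of the subadditivity of $t\mapsto t^\gamma$ together with the genuine triangle inequality in $\el_{2,d-1}$ is only a minor, clean variation that sidesteps the quasi-norm issue for $2\gamma<1$). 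One remark: your dimensionally correct bound produces $\norm{\eps_{\gr{0}}}_{2\gamma,d-1}^{\gamma}$ rather than the first power written in the statement, but this homogeneity discrepancy is inherited from the paper itself, whose own derivation of $\delta_{d-1}\pr{\gri}\leq K_{d,\gamma}\abs{a_{\gri}}^\gamma\norm{\eps_{\gr{0}}}_{2\gamma,d-1}$ actually yields the exponent $\gamma$ on the norm, so your version is if anything the more careful one.
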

 A similar result can be derived when the summation sets are finite disjoint unions of rectangles. 
 
 \begin{Corollary}\label{cor:LLI_linear_sum_on_disjoint_unions_of_rectangles}
Let $\pr{X_{\gri}}_{\gri\in\Z^d}$ be a linear random field defined by 
\eqref{eq:dfn_linear_random_field}, where  $\pr{\eps_{\gri}}_{\gri
\in\Z^d}$ is i.i.d.. 
Let $g\colon \R\to \R$ be a $\gamma$-Hölder continuous function where $0<\gamma\leq 1$
and assume that $\eps_{\gr{0}}\in \mathbb L_{2/\gamma,d-1}$ and $\E{g\pr{X_{\gr{0}}}}=0$. 
Let $\pr{\Gamma_n}_{n\geq 1}$ be a 
sequence of subsets of $\Z^d$ such that for each $n\geq 1$, $\Gamma_n$ is the disjoint union 
of $\Gamma_{n}\pr{w}, 1\leq w\leq J_n$ and 
\begin{equation}
 \Gamma_{n}\pr{w}=\Z^d\cap\prod_{q=1}^d \left[\underline{n}_q\pr{w,n},\overline{n}_q\pr{w,n}\right].
\end{equation}
Assume that $\ell_n:=\operatorname{Card}\pr{\Gamma_n}$ satisfies $\ell_{n+1}\geq \ell_n\geq \exp\pr{n^\delta}$,
$\sum_{k=1}^n\sqrt{\ell_k/LL\pr{\ell_k}}\leq C\ell_n/LL\pr{\ell_n}$, where $C>0$ and $\delta>0$ are independent of $n$, and 
$\overline{n}_q\pr{w,n}-\underline{n}_q\pr{w,n}\geq 4$ for each $n\geq 1$, $1\leq w\leq J_n$, $1\leq q\leq d$.

Then for all $1<p<2$, the following inequality holds:
\begin{multline} 
\norm{ 
\sup_{n\geq 1}\frac 1{ \sqrt{\ell_n LL\pr{\ell_n}  }    }
\abs{\sum_{\gri\in \Gamma_n }X_{\gr{i}}   }
}_p\\ 
\leq K\pr{p,d,C,\delta}\sum_{j\geq 0} \pr{j+1}^{d}L\pr{j}^{1/p}\pr{
\sum_{\gri\in\Z^d,\norm{\gri}_\infty=j}  \abs{a_{\gri }}^{2\gamma}}^{1/2}\norm{\eps_{\gr{0}}}_{2\gamma,0}.
\end{multline}
\end{Corollary}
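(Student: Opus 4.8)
The plan is to regard $\pr{g\pr{X_{\gri}}}_{\gri\in\Z^d}$ as a centered Bernoulli functional of the \emph{same} i.i.d.\ field $\pr{\eps_{\gru}}_{\gru\in\Z^d}$, so that the whole analytic core is already supplied by Corollary~\ref{cor:coeff_phys_dep_union_of_rectangles} and only the physical measures of dependence remain to be estimated. Writing $X_{\gri}=\sum_{\grj\in\Z^d}a_{\grj}\eps_{\gri-\grj}$ as in \eqref{eq:dfn_linear_random_field} and $h\pr{\pr{x_{\gru}}_{\gru\in\Z^d}}:=g\pr{\sum_{\grj\in\Z^d}a_{\grj}x_{-\grj}}$, one has $g\pr{X_{\gri}}=h\pr{\pr{\eps_{\gri-\gru}}_{\gru\in\Z^d}}$. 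This field is centered by the assumption $\E{g\pr{X_{\gr{0}}}}=0$ and lies in $\el^2$: indeed $X_{\gr{0}}\in\el^2$ since $\sum_{\gri}a_{\gri}^2<\infty$ and $\eps_{\gr{0}}\in\el^2$, and the $\gamma$-Hölder bound $\abs{g\pr{x}}\leq\abs{g\pr{0}}+c\abs{x}^\gamma$ with $2\gamma\leq2$ gives $\E{g\pr{X_{\gr{0}}}^2}<\infty$. Thus Corollary~\ref{cor:coeff_phys_dep_union_of_rectangles} applies and reduces the claim to controlling $\pr{\sum_{\norm{\gri}_\infty=j}\delta_{2,0}\pr{\gri}^2}^{1/2}$.

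The core computation is the estimate of the coefficients $\delta_{2,0}\pr{\gri}$ from \eqref{eq:definition_phy_dep_meas}. Let $\eps'_{\gr{0}}$ be an independent copy of $\eps_{\gr{0}}$, let $\pr{\eps^*_{\gru}}_{\gru\in\Z^d}$ be the coupled field, and set $X_{\gri}^*:=\sum_{\grj\in\Z^d}a_{\grj}\eps^*_{\gri-\grj}$. Since $\eps_{\gr{0}}$ enters the expansion of $X_{\gri}$ only through the index $\grj=\gri$,
\begin{equation*}
X_{\gri}-X_{\gri}^*=a_{\gri}\pr{\eps_{\gr{0}}-\eps'_{\gr{0}}}.
\end{equation*}
The $\gamma$-Hölder continuity of $g$ then gives $\abs{g\pr{X_{\gri}}-g\pr{X_{\gri}^*}}\leq c\,\abs{a_{\gri}}^\gamma\abs{\eps_{\gr{0}}-\eps'_{\gr{0}}}^\gamma$, and taking the $\el_{2,0}=\el^2$ norm, using the identity $\norm{\abs{W}^\gamma}_{2,0}=\norm{W}_{2\gamma,0}^\gamma$ and then the (quasi-)triangle inequality $\norm{\eps_{\gr{0}}-\eps'_{\gr{0}}}_{2\gamma,0}\leq c'\norm{\eps_{\gr{0}}}_{2\gamma,0}$ (legitimate because $\eps_{\gr{0}}$ and $\eps'_{\gr{0}}$ are i.i.d.), yields
\begin{equation*}
\delta_{2,0}\pr{\gri}\leq c_\gamma\,\abs{a_{\gri}}^\gamma\,\norm{\eps_{\gr{0}}}_{2\gamma,0}^\gamma.
\end{equation*}

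Squaring, summing over the shell $\norm{\gri}_\infty=j$ and taking the square root now gives
\begin{equation*}
\pr{\sum_{\gri\in\Z^d,\,\norm{\gri}_\infty=j}\delta_{2,0}\pr{\gri}^2}^{1/2}\leq c_\gamma\,\norm{\eps_{\gr{0}}}_{2\gamma,0}^\gamma\pr{\sum_{\gri\in\Z^d,\,\norm{\gri}_\infty=j}\abs{a_{\gri}}^{2\gamma}}^{1/2},
\end{equation*}
and inserting this into the right-hand side of Corollary~\ref{cor:coeff_phys_dep_union_of_rectangles} produces the asserted inequality, with $j$-th weight $\pr{j+1}^d L\pr{j}^{1/p}$. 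The $\eps_{\gr{0}}$-dependent factor is finite: on a probability space $\el_{2/\gamma,d-1}\subseteq\el_{2\gamma,0}$ because $2\gamma\leq2/\gamma$ for $\gamma\leq1$ and $0\leq d-1$ (a comparison of the Orlicz functions for large argument), so $\norm{\eps_{\gr{0}}}_{2\gamma,0}\leq C\norm{\eps_{\gr{0}}}_{2/\gamma,d-1}<\infty$.

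Because Theorem~\ref{thm:cas_fonctionnelle_union_rectangles} and Corollary~\ref{cor:coeff_phys_dep_union_of_rectangles} carry all the genuine difficulty, the only delicate points are bookkeeping ones: the Orlicz/Hölder interaction $\norm{\abs{W}^\gamma}_{2,0}=\norm{W}_{2\gamma,0}^\gamma$ (here transparent since $\norm{\cdot}_{2,0}=\norm{\cdot}_2$, so both sides equal $\pr{\E{\abs{W}^{2\gamma}}}^{1/2}$), and the verification that the hypothesis $\eps_{\gr{0}}\in\el_{2/\gamma,d-1}$ is precisely what renders the right-hand side finite. I expect no substantial obstacle; the same scheme proves the companion Corollary~\ref{cor:LLI_linear}, the sole change being that one must run the Orlicz comparison against $\norm{\cdot}_{2,d-1}$ instead of $\norm{\cdot}_{2}$, for which the estimate $\norm{\abs{W}^\gamma}_{2,d-1}\leq c_\gamma\norm{W}_{2\gamma,d-1}^\gamma$ replaces the transparent identity above.
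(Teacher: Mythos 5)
Your proposal is correct and follows essentially the same route as the paper's own proof: the paper likewise invokes Corollary~\ref{cor:coeff_phys_dep_union_of_rectangles} and then estimates the coefficients $\delta_{2,0}\pr{\gri}$ through the H\"older property of $g$, i.e.\ a version of \eqref{eq:estimate_diff_fct_de_processus_lineaire} with $\varphi_{2,d-1}$ replaced by $\varphi_{2,0}$, which is transparent since $\norm{\cdot}_{2,0}=\norm{\cdot}_2$. One remark: your (correct) computation produces the factor $\norm{\eps_{\gr{0}}}_{2\gamma,0}^{\gamma}$ rather than $\norm{\eps_{\gr{0}}}_{2\gamma,0}$ as displayed in the statement, but this cosmetic power-of-$\gamma$ discrepancy is already present in the paper itself (its stated Corollary~\ref{cor:LLI_linear} versus what its own estimate $\varphi_{2,d-1}\pr{t^\gamma}\leq c_{d,\gamma}\varphi_{2\gamma,d-1}\pr{t}$ actually yields), so it is not a gap in your argument.
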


\subsection{Functional of a Gaussian linear random field} 
Let $\pr{\eps_{\gri}}_{\gri \in \Z^d}$ be an i.i.d. random field where $\eps_{\gr{0}}$ has 
a standard normal distribution. Let 
\begin{equation}
Y_{\grj}:=\sum_{ \gri \in\Z^d  }a_{\gri}\eps_{\grj-\gri}, 
\end{equation}
where $a_{\gri}, \gri\in\Z^d$, are real numbers such that $\sum_{ \gri \in\Z^d  }a_{\gri}^2=1$.
Then $Y_{\grj}=f\pr{\pr{\eps_{\grj -\gri}}_{\gri\in\Z^d}}$ where 
$f\pr{\pr{x_{\gru}}_{\gru\in\Z^d}  }=\sum_{\gru\in\Z^d}a_{\gru}x_{\gru}$  if $\sum_{\gru\in\Z^d}a_{\gru}x_{\gru}$ converges 
(in the sense that $\lim_{m\to +\infty}\sum_{\substack{\gru\in\Z^d \\ \norm{\gru}_\infty\leq m } }a_{\gru}x_{\gru}$ exists)
and $0$ otherwise. Let $\eps'_{\gr{0}}$ be a random variable having the same law as $\eps_{\gr{0}}$ and 
independent of the random field $\pr{\eps_{\gri}}_{\gri \in \Z^d}$ and 
$Y_{\grj}^*=f\pr{\pr{\eps^*_{\grj -\gri}}_{\gri\in\Z^d}}$, where 
$\eps^*_{\gru}=\eps_{\gru}$ if $\gru\neq \gr{0}$ and $\eps^*_{\gr{0}}=\eps'_{\gr{0}}$.

Let $\Hca_0=\R$ and let $\Hca_1$ be the first Wiener chaos defined as the closed subspace of $\mathbb L^2$ 
generated by the random variables $Y_{\grj}$ and $Y^*_{\grj}$, $\grj\in\Z^d$. For $q\geq 2$, let 
$\Hca_q$ be the closed subspace of $\mathbb L^2$ generated by the random variables 
$H_q\pr{Y_{\grj}}$ and $H_q\pr{Y^*_{\grj}}$, $\grj\in\Z^d$, where $H_q$ is the 
$q$-th Hermite polynomial defined by 
\begin{equation}
H_q\pr{x}=\pr{-1}^q\exp\pr{\frac{x^2}2}\frac{d^q}{dx^q}\exp\pr{-\frac{x^2}2}.
\end{equation}

Given a function $f\colon \R\to\R$ such that $f\pr{Y_{\gr{0}}}\in\mathbb L^2$ and 
$\E{f\pr{Y_{\gr{0}}}}=0$, the following expansion holds: 
\begin{equation}
f\pr{Y_{\grj}}=\sum_{q=1}^{+\infty}c_q\pr{f}H_q\pr{Y_{\grj}},
\end{equation}
where 
\begin{equation}\label{eq:def_de_cqf}
c_q\pr{f}=\frac{1}{q!}\E{f\pr{Y_{\gr{0}}}H_q\pr{Y_{\gr{0}}}},
\end{equation}
provided that $\sum_{q=1}^{+\infty}q!c_q\pr{f}^2$ converges.

We are now in position to state the following consequence of Corollary~\ref{cor:coeff_phys_dep} 
for functionals of a Gaussian linear random field.

\begin{Corollary}\label{cor:fct_de_champs_Gaussiens_Hermite}
Let $\pr{\eps_{\gri}}_{\gri\in\Z^d}$ be an i.i.d. random field, 
\begin{equation}
Y_{\grj}:=\sum_{ \gri \in\Z^d  }a_{\gri}\eps_{\grj-\gri}, 
\end{equation}
where $a_{\gri}, \gri\in\Z^d$, are real numbers such that $\sum_{ \gri \in\Z^d  }a_{\gri}^2=1$. Let $f\colon \R\to \R$ be a function such that 
$\E{f\pr{Y_{\gr{0}}}}=0$ and 
\begin{equation}
C\pr{f}:=\sum_{q=1}^{+\infty}\sqrt{ q!}q^{d-\frac 12}\abs{c_q\pr{f}}<+\infty,
\end{equation}
where $c_q\pr{f}$ is defined by \eqref{eq:def_de_cqf}. Then for all $1<p<2$, 
\begin{multline}
\norm{ 
\sup_{\grn\in \N^d}\frac 1{ \sqrt{\abs{\grn} L L\pr{\abs{\grn}}
}    }
\abs{\sum_{\gr{1}\imd\gri \imd\grn  }f\pr{Y_{\gr{i}}   }}
}_p\\
\leq c_{p,d}\sum_{j\geq 0}\pr{j+1}^{d/2}\pr{\sum_{\gri\in\Z^d, \norm{\gri}_\infty=j }a_{\gri}^2  }^{1/2}  C\pr{f} ,
\end{multline}
where $c_{p,d}$ depends only on $p$ and $d$.
\end{Corollary}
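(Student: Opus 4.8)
The plan is to deduce the statement from Corollary~\ref{cor:coeff_phys_dep} applied to the random field $X_{\gri}=f\pr{Y_{\gri}}$, which is a functional of the i.i.d. field $\pr{\eps_{\gru}}_{\gru\in\Z^d}$ since $Y_{\gri}=f\pr{\pr{\eps_{\gri-\gru}}_{\gru\in\Z^d}}$. Comparing the right-hand side of Corollary~\ref{cor:coeff_phys_dep} with the one claimed here, it suffices to prove that for every $j\geq 0$,
\[
\sqrt{\sum_{\gri\in\Z^d,\,\norm{\gri}_\infty=j}\delta_{2,d-1}\pr{\gri}^2}\leq K_d\,C\pr{f}\pr{\sum_{\gri\in\Z^d,\,\norm{\gri}_\infty=j}a_{\gri}^2}^{1/2},
\]
with $\delta_{2,d-1}\pr{\gri}$ as in \eqref{eq:definition_phy_dep_meas}; plugging this into Corollary~\ref{cor:coeff_phys_dep} and absorbing $K_d$ into $c_{p,d}$ yields the result. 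So the whole problem reduces to a pointwise estimate of the physical dependence coefficients by $\abs{a_{\gri}}C\pr{f}$.

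To estimate $\delta_{2,d-1}\pr{\gri}=\norm{f\pr{Y_{\gri}}-f\pr{Y^*_{\gri}}}_{2,d-1}$, I would first record that, $\eps'_{\gr{0}}$ being an independent copy of $\eps_{\gr{0}}$, the vector $\pr{Y_{\gri},Y^*_{\gri}}$ is jointly centered Gaussian with unit variances, and since $Y_{\gri}-Y^*_{\gri}=a_{\gri}\pr{\eps_{\gr{0}}-\eps'_{\gr{0}}}$ its correlation is $\rho_{\gri}:=\E{Y_{\gri}Y^*_{\gri}}=1-a_{\gri}^2$. Using the Hermite expansion of $f$ together with the orthogonality relation $\E{H_q\pr{Y_{\gri}}H_{q'}\pr{Y^*_{\gri}}}=\mathbf 1_{\ens{q=q'}}q!\,\rho_{\gri}^q$ valid for jointly standard Gaussian pairs, and the elementary inequality $1-\pr{1-x}^q\leq qx$ on $[0,1]$, I obtain
\[
\norm{H_q\pr{Y_{\gri}}-H_q\pr{Y^*_{\gri}}}_2^2=2\,q!\,\pr{1-\rho_{\gri}^q}\leq 2\,q!\,q\,a_{\gri}^2 .
\]

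The next step is to pass from the $\mathbb L^2$-norm to the Orlicz norm $\norm{\cdot}_{2,d-1}$. As $H_q\pr{Y_{\gri}}-H_q\pr{Y^*_{\gri}}$ lies in the $q$-th chaos $\Hca_q$, I would invoke a hypercontractivity estimate in the spaces $\el_{2,r}$, namely $\norm{Z}_{2,d-1}\leq K_d\,q^{\pr{d-1}/2}\norm{Z}_2$ for every $Z\in\Hca_q$, which together with the previous display gives $\norm{H_q\pr{Y_{\gri}}-H_q\pr{Y^*_{\gri}}}_{2,d-1}\leq K_d\sqrt{q!}\,q^{d/2}\abs{a_{\gri}}$. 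Applying the triangle inequality in $\el_{2,d-1}$ to the Hermite series (legitimate precisely because the resulting series is summable when $C\pr{f}<\infty$) and using $q^{d/2}\leq q^{d-1/2}$ for $q\geq 1$, I get
\[
\delta_{2,d-1}\pr{\gri}\leq \sum_{q\geq 1}\abs{c_q\pr{f}}\,\norm{H_q\pr{Y_{\gri}}-H_q\pr{Y^*_{\gri}}}_{2,d-1}\leq K_d\,\abs{a_{\gri}}\sum_{q\geq 1}\sqrt{q!}\,q^{d-\frac12}\abs{c_q\pr{f}}=K_d\,\abs{a_{\gri}}\,C\pr{f}.
\]
Squaring, summing over $\gri$ with $\norm{\gri}_\infty=j$ and taking square roots gives the per-$j$ bound displayed above, completing the reduction.

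The only genuinely delicate ingredient is the Orlicz hypercontractivity estimate $\norm{Z}_{2,d-1}\leq K_d\,q^{\pr{d-1}/2}\norm{Z}_2$ on $\Hca_q$; I expect this to be the main obstacle. It should follow from the sharp moment inequalities $\norm{Z}_p\leq\pr{p-1}^{q/2}\norm{Z}_2$ on the $q$-th chaos (in the spirit of Corollary~\ref{cor:Burlholder_Orlicz}) combined with the definition of $\norm{\cdot}_{2,d-1}$: the point is to control $\E{\pr{Z/\lambda}^2\pr{1+\log\pr{1+\abs{Z}/\lambda}}^{d-1}}$ and to check that the logarithmic weight only inflates the norm by the polynomial factor $q^{\pr{d-1}/2}$ rather than by an exponential one, so that choosing $\lambda$ of order $q^{\pr{d-1}/2}\norm{Z}_2$ makes the expectation at most $1$. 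Everything else (the Gaussian correlation computation, the Hermite orthogonality, and the two summations) is routine once this estimate is in hand.
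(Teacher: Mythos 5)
Your overall route coincides with the paper's: reduce via Corollary~\ref{cor:coeff_phys_dep} to the pointwise bound $\delta_{2,d-1}\pr{\gri}\leq K_d\abs{a_{\gri}}C\pr{f}$, expand in Hermite polynomials, bound the $\mathbb L^2$-distance of Hermite differences by $\sqrt{q}\sqrt{q!}\,\abs{a_{\gri}}$, and upgrade to the Orlicz norm by hypercontractivity. Two of your variations are fine and even slightly cleaner: you derive the $\mathbb L^2$-bound self-containedly from the correlation $\rho_{\gri}=1-a_{\gri}^2$, Mehler-type orthogonality $\E{H_q\pr{Y_{\gri}}H_{q'}\pr{Y^*_{\gri}}}=\mathbf 1_{\ens{q=q'}}q!\rho_{\gri}^q$ and Bernoulli's inequality, where the paper simply cites Example~3 of [BD14]; and your target exponent $q^{d/2}$ is sharper than the paper's $q^{d-1/2}$, which you then relax.

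The problem is that the one step you defer — the estimate $\norm{Z}_{2,d-1}\leq K_d\,q^{\pr{d-1}/2}\norm{Z}_2$ for $Z$ in the $q$-th chaos, which you yourself flag as ``the main obstacle'' — is precisely the heart of the paper's proof, and your sketch omits its two essential mechanisms. First, hypercontractivity must be applied at the $q$-dependent exponent $p=2+q^{-1}$, so that the constant $\pr{p-1}^{q/2}=\pr{1+q^{-1}}^{q/2}\leq e^{1/2}$ stays bounded; with any fixed $p>2$ the factor $\pr{p-1}^{q/2}$ grows exponentially in $q$ and the argument collapses. Second, the logarithmic weight is absorbed by the substitution $u=v^{1/q}$: from $\pr{1+\ln\pr{1+u}}^{d-1}\leq c_d\pr{1+u}$ and $\ln\pr{1+v^{1/q}}\geq q^{-1}\ln\pr{1+v}$ one gets
\begin{equation*}
\varphi_{2,d-1}\pr{v}\leq c_d\,q^{d-1}\pr{v^2+v^{2+1/q}},
\end{equation*}
whence $\norm{X}_{2,d-1}\leq c_d\,q^{\pr{d-1}/2}\norm{X}_{2+q^{-1}}$ for an \emph{arbitrary} random variable $X$ (take $\lambda=Kq^{\pr{d-1}/2}\norm{X}_{2+q^{-1}}$; both terms are then $O\pr{K^{-2}q^{-\pr{d-1}}}$). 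Combining this with $\norm{H_q\pr{Y_{\gri}}-H_q\pr{Y^*_{\gri}}}_{2+q^{-1}}\leq\sqrt{e\,q\cdot q!}\,\norm{Y_{\gri}-Y^*_{\gri}}_2$ yields exactly your claimed lemma, so the gap is fillable — but as written your proposal asserts the lemma rather than proves it, and the choice $p=2+q^{-1}$ is nowhere in your text. A second, minor, gap: the termwise triangle inequality for the infinite Hermite series in $\el_{2,d-1}$ needs a justification (the paper extracts an a.s.\ convergent subsequence of the $\mathbb L^2$-partial sums and applies Fatou's lemma for the Orlicz norm); summability of the majorizing series alone does not identify the $\el_{2,d-1}$-limit with $f\pr{Y_{\gri}}-f\pr{Y^*_{\gri}}$.
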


\begin{Corollary}\label{cor:fct_de_champs_Gaussiens_Hermite_sum_on_rect}
Let $\pr{\eps_{\gri}}_{\gri\in\Z^d}$ be an i.i.d. random field, 
\begin{equation}
Y_{\grj}:=\sum_{ \gri \in\Z^d  }a_{\gri}\eps_{\grj-\gri}, 
\end{equation}
where $a_{\gri}, \gri\in\Z^d$, are real numbers such that $\sum_{ \gri \in\Z^d  }a_{\gri}^2=1$. Let $f\colon \R\to \R$ be a function such that 
$\E{f\pr{Y_{\gr{0}}}}=0$ and 
\begin{equation}
C\pr{f}:=\sum_{q=1}^{+\infty}\sqrt{ q!}q^{\frac 12}\abs{c_q\pr{f}}<+\infty,
\end{equation}
where $c_q\pr{f}$ is defined by \eqref{eq:def_de_cqf}.
Let $\pr{\Gamma_n}_{n\geq 1}$ be a 
sequence of subsets of $\Z^d$ such that for each $n\geq 1$, $\Gamma_n$ is the disjoint union 
of $\Gamma_{n}\pr{w}, 1\leq w\leq J_n$ and 
\begin{equation}
 \Gamma_{n}\pr{w}=\Z^d\cap\prod_{q=1}^d \left[\underline{n}_q\pr{w,n},\overline{n}_q\pr{w,n}\right].
\end{equation}
Assume that $\ell_n:=\operatorname{Card}\pr{\Gamma_n}$ satisfies $\ell_{n+1}\geq \ell_n\geq \exp\pr{n^\delta}$,
$\sum_{k=1}^n\sqrt{\ell_k/LL\pr{\ell_k}}\leq C\ell_n/LL\pr{\ell_n}$, where $C>0$ and $\delta>0$ are independent of $n$, and 
$\overline{n}_q\pr{w,n}-\underline{n}_q\pr{w,n}\geq 4$ for each $n\geq 1$, $1\leq w\leq J_n$, $1\leq q\leq d$.

Then for all $1<p<2$, the following inequality holds:
\begin{multline} 
\norm{ 
\sup_{n\geq 1}\frac 1{ \sqrt{\ell_n LL\pr{\ell_n}  }    }
\abs{\sum_{\gri\in \Gamma_n }f\pr{Y_{\gr{i}} }  }
}_p\\ 
\leq K\pr{p,d,C,\delta}C\pr{f}\sum_{j\geq 0} \pr{j+1}^{d}L\pr{j}^{1/p}\pr{
\sum_{\gri\in\Z^d,\norm{\gri}_\infty=j}  \abs{a_{\gri }}^{2}}^{1/2} .
\end{multline}
\end{Corollary}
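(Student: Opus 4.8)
The plan is to apply Theorem~\ref{thm:cas_fonctionnelle_union_rectangles} to the centered field $X_\gri := f\pr{Y_\gri}$, which is of the required functional form $X_\gri = \tilde f\pr{\pr{\eps_{\gri - \gru}}_{\gru\in\Z^d}}$ with $\tilde f\pr{\pr{x_\gru}_{\gru\in\Z^d}} := f\pr{\sum_{\gru\in\Z^d} a_\gru x_\gru}$, and whose hypotheses on $\pr{\Gamma_n}_{n\ge 1}$ are exactly the ones assumed here. By \eqref{eq:control_fct_max_somme_sur_ens} it then suffices to bound $\norm{X_{\gr{0}, j}}_2$, with $X_{\gr{0}, j}$ given by \eqref{eq:definitionde_X0j}--\eqref{eq:definitionde_X00}, by $C\pr{f}\pr{\sum_{\norm{\gri}_\infty = j} a_\gri^2}^{1/2}$; feeding this into \eqref{eq:control_fct_max_somme_sur_ens} yields the claim directly.

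To estimate $\norm{X_{\gr{0}, j}}_2$ I would use the chaos expansion $f\pr{Y_{\gr{0}}} = \sum_{q\ge 1} c_q\pr{f} H_q\pr{Y_{\gr{0}}}$. Writing $\Fca_j := \sigma\ens{\eps_\gru : \norm{\gru}_\infty \le j}$ and $\al_j^2 := \sum_{\norm{\gri}_\infty \le j} a_\gri^2$ (so that $\al_j \uparrow 1$), one decomposes $Y_{\gr{0}} = \al_j U_j + \beta_j V_j$, where $\beta_j^2 = 1 - \al_j^2$, the variable $U_j$ is the normalized $\Fca_j$-measurable part of $Y_{\gr{0}}$, and $V_j$ is standard normal and independent of $\Fca_j$. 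The key ingredient is the Hermite projection formula $\E{H_q\pr{\al U + \beta V} \mid U} = \al^q H_q\pr{U}$, valid for independent standard normal $U,V$ with $\al^2 + \beta^2 = 1$; it follows by integrating the generating identity $\sum_{q\ge 0}\frac{t^q}{q!}H_q\pr{x} = \exp\pr{tx - t^2/2}$ in $V$. Hence $\E{H_q\pr{Y_{\gr{0}}} \mid \Fca_j} = \al_j^q H_q\pr{U_j}$, and since conditional expectation with respect to a sub-$\sigma$-algebra generated by Gaussian variables preserves the Wiener chaos, the summands indexed by $q$ stay mutually orthogonal. Combining this with the martingale-difference structure in $j$ gives
\[
\norm{X_{\gr{0}, j}}_2^2 = \sum_{q\ge 1} c_q\pr{f}^2\, q!\,\pr{\al_j^{2q} - \al_{j-1}^{2q}}.
\]

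The remaining step is elementary. Using $0 \le \al_{j-1} \le \al_j \le 1$ together with $x^q - y^q = \pr{x-y}\sum_{m=0}^{q-1} x^m y^{q-1-m} \le q\pr{x-y}$ for $x = \al_j^2$, $y = \al_{j-1}^2$, one gets $\al_j^{2q} - \al_{j-1}^{2q} \le q\sum_{\norm{\gri}_\infty = j} a_\gri^2$, whence
\[
\norm{X_{\gr{0}, j}}_2 \le \pr{\sum_{q\ge 1} q!\, q\, c_q\pr{f}^2}^{1/2}\pr{\sum_{\norm{\gri}_\infty = j} a_\gri^2}^{1/2} \le C\pr{f}\pr{\sum_{\norm{\gri}_\infty = j} a_\gri^2}^{1/2},
\]
the last bound being the embedding $\ell^2 \hookrightarrow \ell^1$ applied to the sequence $\pr{\sqrt{q!}\,q^{1/2}\abs{c_q\pr{f}}}_{q\ge 1}$.

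The only genuinely delicate point is the chaos computation leading to the displayed identity for $\norm{X_{\gr{0}, j}}_2^2$ --- namely the projection formula and the orthogonality used to separate the contributions of the different chaoses. This is the same mechanism as in the proof of Corollary~\ref{cor:fct_de_champs_Gaussiens_Hermite}; the essential simplification here is that Theorem~\ref{thm:cas_fonctionnelle_union_rectangles} only requires the $\mathbb L^2$-norm of $X_{\gr{0}, j}$ rather than the Orlicz norm $\norm{\cdot}_{2,d-1}$, which is precisely why the lighter weight $q^{1/2}$ in $C\pr{f}$ replaces the heavier weight $q^{d-1/2}$ of that corollary.
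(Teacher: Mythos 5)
Your proof is correct, but it takes a genuinely different route from the paper's. The paper deduces this corollary from Corollary~\ref{cor:coeff_phys_dep_union_of_rectangles} (the physical-dependence-measure version of Theorem~\ref{thm:cas_fonctionnelle_union_rectangles}): one bounds $\delta_{2,0}\pr{\gri}=\norm{f\pr{Y_{\gri}}-f\pr{Y_{\gri}^*}}_2$ by expanding in Hermite polynomials and invoking the coupling inequality \eqref{eq:variance_Hermite}, $\norm{H_q\pr{Y_{\grj}}-H_q\pr{Y_{\grj}^*}}_2\leq \sqrt{q}\sqrt{q!}\,\norm{Y_{\grj}-Y_{\grj}^*}_2$, together with $\norm{Y_{\grj}-Y_{\grj}^*}_2=\sqrt{2}\abs{a_{\grj}}$; this makes the proof a one-line adaptation of Corollary~\ref{cor:fct_de_champs_Gaussiens_Hermite}, reusing already established machinery. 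You instead apply Theorem~\ref{thm:cas_fonctionnelle_union_rectangles} directly and compute $\norm{X_{\gr{0},j}}_2$ exactly via the Mehler-type projection formula $\E{H_q\pr{\alpha U+\beta V}\mid U}=\alpha^q H_q\pr{U}$ and orthogonality of the Wiener chaoses, obtaining the identity $\norm{X_{\gr{0},j}}_2^2=\sum_{q\geq 1}c_q\pr{f}^2 q!\pr{\alpha_j^{2q}-\alpha_{j-1}^{2q}}$ before the elementary estimate $\alpha_j^{2q}-\alpha_{j-1}^{2q}\leq q\sum_{\norm{\gri}_\infty=j}a_{\gri}^2$; all the steps (interchange of the $\el^2$-convergent expansion with conditional expectation, the Pythagoras step using $\E{H_q\pr{U_j}\mid\Fca_{j-1}}=\pr{\alpha_{j-1}/\alpha_j}^qH_q\pr{U_{j-1}}$, and the $\ell^2\hookrightarrow\ell^1$ embedding) check out, and both arguments produce the same weight $\sqrt{q!}\,q^{1/2}$ in $C\pr{f}$. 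What your route buys is precision and self-containedness: an exact formula for the projection norms rather than an upper bound routed through the substitution coupling, and no need for the intermediate Corollary~\ref{cor:coeff_phys_dep_union_of_rectangles}. What it costs is a heavier use of the Gaussian structure (the projection formula and chaos preservation under conditioning), whereas the paper's coupling argument is the one that generalizes to the non-Gaussian functionals treated elsewhere in the paper. One small point of rigor worth adding: in the degenerate case $\alpha_{j-1}=0$ (or $j=0$) the variable $U_{j-1}$ is undefined, so you should state the convention that the corresponding term is $\E{f\pr{Y_{\gr{0}}}\mid\Fca_{j}}$ itself, which your identity covers with $\alpha_{-1}:=0$.
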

 
\subsection{Volterra processes}

Volterra random fields of second order are defined in the following way. 
Let $\pr{\eps_{\gri}}_{\gri
\in\Z^d}$ be an i.i.d. collection of centered random variables and $\pr{a_{\gr{s_1},\gr{s_2}}
}_{\gr{s_1},\gr{s_2}\in\Z^d}$ be a family of real numbers such that $
a_{\gr{s_1},\gr{s_2}}=0$ if $\gr{s_1}=\gr{s_2}$ and $\sum_{\gr{s_1},\gr{s_2}\in\Z^d}a^2_{\gr{s_1},\gr{s_2}}$ is finite. Define 
\begin{equation}\label{eq:dfn_Volterra}
X_{\grj}:=\sum_{\gr{s_1},\gr{s_2}\in\Z^d}a_{\gr{s_1},\gr{s_2}}
\eps_{\grj-\gr{s_1}}\eps_{\grj-\gr{s_2}}.
\end{equation}

One can bound the term $\norm{X_{\gr{0},j  }}_{2,d-1}$, which leads to 
the following result.

\begin{Corollary}\label{cor:LLI_Volterra}
Let $\pr{X_{\gri}}_{\gri\in\Z^d}$ be a Volterra random field defined by 
\eqref{eq:dfn_Volterra}, where  $\pr{\eps_{\gri}}_{\gri
\in\Z^d}$ is i.i.d. and $\eps_{\gr{0}}\in \mathbb L_{2,d-1}$. Then for  
$1<p<2$, the following inequality holds:
\begin{multline}
\norm{ 
\sup_{n\geq 1}\frac 1{ \sqrt{\ell_n LL\pr{\ell_n}  }    }
\abs{\sum_{\gri\in \Gamma_n }X_{\gr{i}}   }
}_p\\
\leq c_{p,d}\sum_{j\geq 0} \pr{j+1}^{d/2} 
\pr{\sum_{\norm{\gr{s_1}}_\infty=j}
\sum_{\norm{\gr{s_2}}_\infty\leq j}
\pr{a_{\gr{s_1},\gr{s_2}}^2+ a_{\gr{s_2},\gr{s_1}}^2}        }^{1/2}\norm{\eps_{\gr{0}}}_{2,d-1}^2,
\end{multline}
where $c_{p,d}$ depends only on $p$ and $d$.
\end{Corollary}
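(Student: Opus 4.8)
The plan is to reduce the statement, via Theorem~\ref{thm:cas_fonctionnelle_iid}, to the single estimate
\begin{equation}\label{eq:plan_goal}
\norm{X_{\gr{0},j}}_{2,d-1}\leq c_d\pr{\sum_{\norm{\gr{s_1}}_\infty=j}\sum_{\norm{\gr{s_2}}_\infty\leq j}\pr{a_{\gr{s_1},\gr{s_2}}^2+a_{\gr{s_2},\gr{s_1}}^2}}^{1/2}\norm{\eps_{\gr{0}}}_{2,d-1}^2,
\end{equation}
since substituting \eqref{eq:plan_goal} into the right-hand side of Theorem~\ref{thm:cas_fonctionnelle_iid} produces exactly the claimed inequality. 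The first task is to identify $X_{\gr{0},j}$ explicitly. With $\Fca_j:=\sigma\ens{\eps_{\gru},\norm{\gru}_\infty\leq j}$, and using that for $\gr{s_1}\neq\gr{s_2}$ the variables $\eps_{-\gr{s_1}}$ and $\eps_{-\gr{s_2}}$ are independent and centered, one gets $\E{\eps_{-\gr{s_1}}\eps_{-\gr{s_2}}\mid\Fca_j}=\eps_{-\gr{s_1}}\eps_{-\gr{s_2}}$ when $\max\pr{\norm{\gr{s_1}}_\infty,\norm{\gr{s_2}}_\infty}\leq j$ and $0$ otherwise; subtracting the analogous conditional expectation given $\Fca_{j-1}$ leaves the finite sum
\begin{equation}\label{eq:plan_X0j}
X_{\gr{0},j}=\sum_{\substack{\gr{s_1}\neq\gr{s_2}\\ \max\pr{\norm{\gr{s_1}}_\infty,\norm{\gr{s_2}}_\infty}=j}}a_{\gr{s_1},\gr{s_2}}\eps_{-\gr{s_1}}\eps_{-\gr{s_2}}.
\end{equation}

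To prove \eqref{eq:plan_goal} I would fix the lexicographic order $\lex$ on $\Z^d$ and rewrite the diagonal-free chaos \eqref{eq:plan_X0j} as a single sum of martingale differences. Grouping the two symmetric contributions of each unordered pair and summing over the $\lex$-larger index,
\begin{equation}\label{eq:plan_mds}
X_{\gr{0},j}=\sum_{\gr{t}}\eps_{-\gr{t}}D_{\gr{t}},\qquad D_{\gr{t}}:=\sum_{\substack{\gr{s}\lex\gr{t}\\ \max\pr{\norm{\gr{s}}_\infty,\norm{\gr{t}}_\infty}=j}}\pr{a_{\gr{t},\gr{s}}+a_{\gr{s},\gr{t}}}\eps_{-\gr{s}}.
\end{equation}
Setting $\Gca_{\gr{t}}:=\sigma\ens{\eps_{-\gr{s}},\gr{s}\lex\gr{t}}$, the variable $D_{\gr{t}}$ is $\Gca_{\gr{t}}$-measurable while $\eps_{-\gr{t}}$ is independent of $\Gca_{\gr{t}}$ and centered, so $\pr{\eps_{-\gr{t}}D_{\gr{t}}}_{\gr{t}}$ is a (finite) martingale difference sequence.

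I would then remove the two $\eps$-factors by two applications of the Orlicz Burkholder--Rosenthal inequality of Corollary~\ref{cor:Burlholder_Orlicz}. Applied to \eqref{eq:plan_mds}, the first bounds $\norm{X_{\gr{0},j}}_{2,d-1}$ by the $\norm{\cdot}_{2,d-1}$-norm of the predictable quadratic variation $\pr{\sum_{\gr{t}}\E{\eps_{-\gr{t}}^2D_{\gr{t}}^2\mid\Gca_{\gr{t}}}}^{1/2}=\norm{\eps_{\gr{0}}}_2\pr{\sum_{\gr{t}}D_{\gr{t}}^2}^{1/2}$, up to a maximal term of the same nature and lower order. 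Letting $\gamma_{\gr{s}}$ denote the $\ell^2$-vector whose $\gr{t}$-th coordinate is $a_{\gr{t},\gr{s}}+a_{\gr{s},\gr{t}}$ when $\gr{s}\lex\gr{t}$ and $\max\pr{\norm{\gr{s}}_\infty,\norm{\gr{t}}_\infty}=j$ and is $0$ otherwise, one has $\pr{\sum_{\gr{t}}D_{\gr{t}}^2}^{1/2}=\norm{\sum_{\gr{s}}\eps_{-\gr{s}}\gamma_{\gr{s}}}_{\ell^2}$, an $\ell^2$-valued sum of independent centered terms; an $\ell^2$-valued version of the same inequality gives $\norm{\pr{\sum_{\gr{t}}D_{\gr{t}}^2}^{1/2}}_{2,d-1}\leq c\norm{\eps_{\gr{0}}}_{2,d-1}\pr{\sum_{\gr{s}}\norm{\gamma_{\gr{s}}}_{\ell^2}^2}^{1/2}$. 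Since $\norm{\eps_{\gr{0}}}_2\leq\norm{\eps_{\gr{0}}}_{2,d-1}$ and $\sum_{\gr{s}}\norm{\gamma_{\gr{s}}}_{\ell^2}^2=\sum_{\gr{s}\lex\gr{t},\,\max=j}\pr{a_{\gr{t},\gr{s}}+a_{\gr{s},\gr{t}}}^2\leq 2\sum_{\norm{\gr{s_1}}_\infty=j}\sum_{\norm{\gr{s_2}}_\infty\leq j}\pr{a_{\gr{s_1},\gr{s_2}}^2+a_{\gr{s_2},\gr{s_1}}^2}$, assembling the two bounds yields \eqref{eq:plan_goal}.

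The main difficulty is the second, $\ell^2$-valued application: one must control, in the Orlicz norm $\norm{\cdot}_{2,d-1}$, the square root of the quadratic variation of the inner chaos so that the $\ell^2$-aggregation of the coefficients is exact. In particular the argument has to guarantee that the $O\pr{j^{d-1}}$ sites with $\norm{\cdot}_\infty=j$ generate no spurious powers of $j$ beyond the weight $\pr{j+1}^{d/2}$ already carried by Theorem~\ref{thm:cas_fonctionnelle_iid}, and that the two independent $\eps$-factors recombine into exactly $\norm{\eps_{\gr{0}}}_{2,d-1}^2$ rather than a higher power of the logarithmic weight; this is precisely the point at which a genuinely vector-valued form of the Orlicz Burkholder--Rosenthal inequality is required.
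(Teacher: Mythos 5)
Your overall skeleton agrees with the paper's: reduce via Theorem~\ref{thm:cas_fonctionnelle_iid} to a bound on $\norm{X_{\gr{0},j}}_{2,d-1}$, identify $X_{\gr{0},j}$ as the diagonal-free chaos supported on the shell $\max\pr{\norm{\gr{s_1}}_\infty,\norm{\gr{s_2}}_\infty}=j$ (your one-line formula is exactly the paper's three-term decomposition), and then exploit a martingale structure together with the Orlicz Burkholder--Rosenthal inequality. The genuine gap is the step you yourself flag as ``the main difficulty'': after the first application of Corollary~\ref{cor:Burlholder_Orlicz}, you control $\norm{\pr{\sum_{\gr{t}}D_{\gr{t}}^2}^{1/2}}_{2,d-1}$ by invoking an $\ell^2$-valued version of that corollary. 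No such vector-valued inequality is available: Proposition~\ref{prop:Burlholder_Orlicz} and Corollary~\ref{cor:Burlholder_Orlicz} are scalar statements deduced from Burkholder's scalar lemmas, and you do not prove the extension. As written, the argument is therefore incomplete exactly at the step that produces the coefficient sum, and the maximal term, which you dismiss as lower order, is also left without justification.

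The point you are missing is that no vector-valued inequality is required; the scalar tools already in the paper close your own argument. By Lemma~\ref{lem:norm_Orlicz_puissances} and the triangle inequality in $\el_{1,d-1}$,
\begin{equation*}
\norm{\pr{\sum_{\gr{t}}D_{\gr{t}}^2}^{1/2}}_{2,d-1}\leq c_{d}\norm{\sum_{\gr{t}}D_{\gr{t}}^2}_{1,d-1}^{1/2}\leq c_{d}\pr{\sum_{\gr{t}}\norm{D_{\gr{t}}^2}_{1,d-1}}^{1/2}\leq c_{d}'\pr{\sum_{\gr{t}}\norm{D_{\gr{t}}}_{2,d-1}^2}^{1/2},
\end{equation*}
and each $D_{\gr{t}}$ is a finite sum of independent centered terms, so the second (Rosenthal-type) inequality of Corollary~\ref{cor:Burlholder_Orlicz} gives $\norm{D_{\gr{t}}}_{2,d-1}^2\leq K_{d}\sum_{\gr{s}}\pr{a_{\gr{t},\gr{s}}+a_{\gr{s},\gr{t}}}^2\norm{\eps_{\gr{0}}}_{2,d-1}^2$; combining these two displays yields your target estimate on $\norm{X_{\gr{0},j}}_{2,d-1}$ with purely scalar arguments. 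The maximal term is handled by the same chain, since $\max_{\gr{t}}\abs{\eps_{-\gr{t}}D_{\gr{t}}}\leq\pr{\sum_{\gr{t}}\eps_{-\gr{t}}^2D_{\gr{t}}^2}^{1/2}$ and, by independence and submultiplicativity of $\varphi_{2,d-1}$, $\norm{\eps_{-\gr{t}}D_{\gr{t}}}_{2,d-1}\leq c_d\norm{\eps_{\gr{0}}}_{2,d-1}\norm{D_{\gr{t}}}_{2,d-1}$. This is precisely the mechanism of the paper's proof: there the martingale differences are indexed by the shell sites $\norm{\gr{s_2}}_\infty=j$ (so each difference is $\eps_{-\gr{s_2}}$ times a linear form in the inner variables), the square-function term is pushed through $\norm{\cdot}_{1,d-1}$ by the triangle inequality and Lemma~\ref{lem:norm_Orlicz_puissances}, and a second scalar application of Corollary~\ref{cor:Burlholder_Orlicz} to the inner linear forms produces the sum of squared coefficients; your lexicographic indexing would work equally well once this replacement is made.
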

 
\begin{Corollary}\label{cor:LLI_Volterra_sum_on_rect}
Let $\pr{X_{\gri}}_{\gri\in\Z^d}$ be a Volterra random field defined by 
\eqref{eq:dfn_Volterra}, where  $\pr{\eps_{\gri}}_{\gri
\in\Z^d}$ is i.i.d. and $\eps_{\gr{0}}\in \mathbb L^{2}$. Then for  
$1<p<2$, the following inequality holds:
\begin{multline}
\norm{ 
\sup_{\grn\in \N^d}\frac 1{ \sqrt{\abs{\grn} L L\pr{\abs{\grn}}}        }
\abs{\sum_{\gr{1}\imd\gri \imd\grn  }X_{\gr{i}}   }
}_p\\
\leq K\pr{p,d,C,\delta}\sum_{j\geq 0} \pr{j+1}^{d}L\pr{j}^{1/p} 
\pr{\sum_{\norm{\gr{s_1}}_\infty=j}
\sum_{\norm{\gr{s_2}}_\infty\leq j}
\pr{a_{\gr{s_1},\gr{s_2}}^2+ a_{\gr{s_2},\gr{s_1}}^2}        }^{1/2}\norm{\eps_{\gr{0}}}_{2,0}^2.
\end{multline}
\end{Corollary}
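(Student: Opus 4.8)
The plan is to identify the Volterra field as an admissible functional of the i.i.d.\ field $\pr{\eps_{\gru}}_{\gru\in\Z^d}$, apply Theorem~\ref{thm:cas_fonctionnelle_union_rectangles}, and then estimate $\norm{X_{\gr{0},j}}_2$ directly from the coefficients. First I would put $f\pr{\pr{x_{\gru}}_{\gru\in\Z^d}}:=\sum_{\gr{s_1},\gr{s_2}\in\Z^d}a_{\gr{s_1},\gr{s_2}}x_{\gr{s_1}}x_{\gr{s_2}}$, so that $X_{\gri}=f\pr{\pr{\eps_{\gri-\gru}}_{\gru\in\Z^d}}$. Because $a_{\gr{s_1},\gr{s_2}}=0$ on the diagonal and the $\eps_{\gru}$ are centered and independent, each summand has zero mean, so $\E{X_{\gr{0}}}=0$; together with $\eps_{\gr{0}}\in\mathbb L^2$ and $\sum_{\gr{s_1},\gr{s_2}}a_{\gr{s_1},\gr{s_2}}^2<\infty$ this gives $X_{\gr{0}}\in\mathbb L^2$. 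Hence the hypotheses of Theorem~\ref{thm:cas_fonctionnelle_union_rectangles} hold, and it remains only to bound the $\mathbb L^2$-norms of the projection differences $X_{\gr{0},j}$.

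Second, I would compute $X_{\gr{0},j}$ explicitly. With $\Fca_j:=\sigma\ens{\eps_{\gru},\norm{\gru}_\infty\leq j}$, the variable $\eps_{-\gr{s}}$ is $\Fca_j$-measurable precisely when $\norm{\gr{s}}_\infty\leq j$; independence and centering then show that, for $\gr{s_1}\neq\gr{s_2}$, the quantity $\E{\eps_{-\gr{s_1}}\eps_{-\gr{s_2}}\mid\Fca_j}$ equals $\eps_{-\gr{s_1}}\eps_{-\gr{s_2}}$ when both $\norm{\gr{s_1}}_\infty\leq j$ and $\norm{\gr{s_2}}_\infty\leq j$, and vanishes otherwise. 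Consequently $\E{X_{\gr{0}}\mid\Fca_j}=\sum_{\norm{\gr{s_1}}_\infty\leq j,\ \norm{\gr{s_2}}_\infty\leq j}a_{\gr{s_1},\gr{s_2}}\eps_{-\gr{s_1}}\eps_{-\gr{s_2}}$, and subtracting the analogous expression at level $j-1$ isolates the shell
\[
X_{\gr{0},j}=\sum_{\max\pr{\norm{\gr{s_1}}_\infty,\norm{\gr{s_2}}_\infty}=j}a_{\gr{s_1},\gr{s_2}}\eps_{-\gr{s_1}}\eps_{-\gr{s_2}}.
\]

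Third comes the $\mathbb L^2$ estimate, obtained from the orthogonality of off-diagonal products. For $\gr{s_1}\neq\gr{s_2}$ and $\gr{t_1}\neq\gr{t_2}$ the mixed moment $\E{\eps_{-\gr{s_1}}\eps_{-\gr{s_2}}\eps_{-\gr{t_1}}\eps_{-\gr{t_2}}}$ vanishes unless every index occurs with even multiplicity, which forces $\pr{\gr{t_1},\gr{t_2}}=\pr{\gr{s_1},\gr{s_2}}$ or $\pr{\gr{t_1},\gr{t_2}}=\pr{\gr{s_2},\gr{s_1}}$, each contributing $\norm{\eps_{\gr{0}}}_2^4$. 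Expanding the square over the shell therefore yields $\E{X_{\gr{0},j}^2}=\norm{\eps_{\gr{0}}}_2^4\sum\pr{a_{\gr{s_1},\gr{s_2}}^2+a_{\gr{s_1},\gr{s_2}}a_{\gr{s_2},\gr{s_1}}}$. Using $a_{\gr{s_1},\gr{s_2}}a_{\gr{s_2},\gr{s_1}}\leq\frac12\pr{a_{\gr{s_1},\gr{s_2}}^2+a_{\gr{s_2},\gr{s_1}}^2}$, the invariance of the shell under transposition, and the decomposition of $\ens{\max\pr{\norm{\gr{s_1}}_\infty,\norm{\gr{s_2}}_\infty}=j}$ into the two pieces $\ens{\norm{\gr{s_1}}_\infty=j,\ \norm{\gr{s_2}}_\infty\leq j}$ and $\ens{\norm{\gr{s_2}}_\infty=j,\ \norm{\gr{s_1}}_\infty\leq j}$ (relabelling the indices in the second piece), I obtain
\[
\norm{X_{\gr{0},j}}_2\leq c\,\norm{\eps_{\gr{0}}}_2^2\pr{\sum_{\norm{\gr{s_1}}_\infty=j}\ \sum_{\norm{\gr{s_2}}_\infty\leq j}\pr{a_{\gr{s_1},\gr{s_2}}^2+a_{\gr{s_2},\gr{s_1}}^2}}^{1/2}.
\]
Inserting this into the conclusion of Theorem~\ref{thm:cas_fonctionnelle_union_rectangles} and recalling $\norm{\cdot}_{2,0}=\norm{\cdot}_2$, the claimed bound follows after absorbing the numerical constant $c$ into $K\pr{p,d,C,\delta}$.

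I expect the main obstacle to be the bookkeeping in the third step: matching the symmetric shell $\ens{\max\pr{\norm{\gr{s_1}}_\infty,\norm{\gr{s_2}}_\infty}=j}$ that defines $X_{\gr{0},j}$ with the asymmetric double sum over $\ens{\norm{\gr{s_1}}_\infty=j,\ \norm{\gr{s_2}}_\infty\leq j}$ in the statement, while tracking the diagonal and transposed contributions that produce the combination $a_{\gr{s_1},\gr{s_2}}^2+a_{\gr{s_2},\gr{s_1}}^2$. Since only the $\mathbb L^2$-norm is needed, plain second-order orthogonality suffices and no hypercontractivity or Orlicz-space Burkholder inequality (as required for the companion $\norm{\cdot}_{2,d-1}$ estimate) is involved.
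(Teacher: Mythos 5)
Your proposal is correct and follows essentially the paper's own route: the paper proves this corollary by repeating the computation from Corollary~\ref{cor:LLI_Volterra} (identifying $X_{\gr{0},j}$ as the sum over the shell $\max\pr{\norm{\gr{s_1}}_\infty,\norm{\gr{s_2}}_\infty}=j$) but replacing the Orlicz-space Burkholder inequality of Corollary~\ref{cor:Burlholder_Orlicz} by plain $\mathbb L^2$ orthogonality, and then invoking the union-of-rectangles bound of Theorem~\ref{thm:cas_fonctionnelle_union_rectangles} — exactly your steps, your direct fourth-moment expansion being an equivalent formulation of that orthogonality argument. You also read the statement the intended way: the left-hand side as printed (a supremum over rectangles, with a constant $K\pr{p,d,C,\delta}$ that only makes sense in the $\Gamma_n$ setting) appears to have been swapped with that of Corollary~\ref{cor:LLI_Volterra}, and your proof addresses the corrected, union-of-rectangles version.
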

\section{Proofs}

\subsection{Tools for the proofs}

\subsubsection{Global ideas of proofs}

Let us explain the main steps in the proof of the results.
For Theorem~\ref{thm:cas_fonctionnelle_iid}, we proceed as follows.
\begin{enumerate}
\item As a first step, we prove Theorem~\ref{thm:cas_fonctionnelle_iid} in the 
i.i.d. case. To this aim, we show that the maximal function involved in 
Theorem~\ref{thm:cas_fonctionnelle_iid} can be replaced by another one where 
the supremum is taken only over elements of $\N^d$ having dyadic coordinates. 
Then we apply an appropriated exponential inequality in order to control the moments 
of this new maximal function. This gives Theorem~\ref{thm:iid}.
\item In order to prove Theorem~\ref{thm:cas_fonctionnelle_iid}, we bound the maximal 
function by a series of maximal functions associated to an i.i.d. random field. The contribution 
of these maximal functions can be estimated by using Theorem~\ref{thm:iid}.
\end{enumerate}
 
For Theorem~\ref{thm:linear_process_arbitrary_subsets}, we reduce the problem to the i.i.d. case.  
We use a truncation argument and the deviation 
inequality given in Proposition~\ref{prop:Freedman}. 

Theorem~\ref{thm:cas_fonctionnelle_union_rectangles} rests on Theorem~\ref{thm:linear_process_arbitrary_subsets} 
and a bound on the maximal function by maximal function of an i.i.d. random field. However, we need to 
reduce the summation on subsets of the union of rectangles and check that these sets satisfy the assumptions of 
Theorem~\ref{thm:linear_process_arbitrary_subsets} with the explicit constants.
 
\subsubsection{Weak $\mathbb L^p$-spaces}

The results of the paper provide a control of 
the $\el^p$ norm of a maximal function. However, 
it will sometimes be more convenient to work directly 
with tails. To this aim, we will consider weak
 $\el^p$-spaces.

\begin{Definition}
Let $p>1$. The weak $\el^p$-space, denoted by $\el^{p,w}$, is the 
space of random variables $X$ such that $\sup_{t>0}t^p\PP\ens{\abs X>t}$ 
is finite. 
\end{Definition}

These spaces can be endowed with a norm.

\begin{Lemma}\label{lem:lp_faibles}
Let $1<p< 2$. Define the following norm on $\el^{p,w}$
\begin{equation}
 \norm{X}_{p,w}:=\sup\ens{ \PP\pr{A}^{1/p-1}\E{\abs{X}\mathbf 1_A}   }.
\end{equation}

For all random variable $X\in \el^{p,w}$, the following inequality holds:
\begin{equation}\label{eq:comp_lp_weak}
c_p\norm{X}_{p,w}\leq \pr{\sup_{t>0}t^p\PP\ens{\abs X>t}  }^{1/p}
\leq C_p\norm{X}_{p,w}\leq C_p\norm{X}_{p},
\end{equation}
where $c_p$ and $C_p$ depend only on $p$.
\end{Lemma}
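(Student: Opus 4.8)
The plan is to establish the three inequalities of \eqref{eq:comp_lp_weak} one at a time, writing $N\pr{X}:=\pr{\sup_{t>0}t^p\PP\ens{\abs X>t}}^{1/p}$ for the middle quantity and restricting the supremum in the definition of $\norm{\cdot}_{p,w}$ to sets $A$ with $\PP\pr{A}>0$ (for which the expression $\PP\pr A^{1/p-1}\E{\abs X\mathbf 1_A}$ is well defined).

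The rightmost inequality $\norm X_{p,w}\leq \norm X_p$ is immediate from Hölder's inequality: for any such $A$ one has $\E{\abs X\mathbf 1_A}\leq \norm X_p\PP\pr{A}^{1-1/p}$, so that $\PP\pr{A}^{1/p-1}\E{\abs X\mathbf 1_A}\leq\norm X_p$, and it remains to take the supremum over $A$.

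For the inequality $N\pr X\leq C_p\norm X_{p,w}$ I would simply test the definition of $\norm{\cdot}_{p,w}$ on the super-level sets. Fixing $t>0$ and taking $A=\ens{\abs X>t}$, the elementary bound $\E{\abs X\mathbf 1_A}\geq t\PP\pr A$ gives $\norm X_{p,w}\geq t\PP\pr A^{1/p}$, hence $t^p\PP\ens{\abs X>t}\leq \norm X_{p,w}^p$; taking the supremum over $t$ yields $N\pr X\leq \norm X_{p,w}$, so one may take $C_p=1$.

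The only step requiring genuine work is the leftmost inequality, equivalently an upper bound $\norm X_{p,w}\leq c_p^{-1}N\pr X$. Here I would fix $A$ with $a:=\PP\pr A>0$ and use the layer-cake identity $\E{\abs X\mathbf 1_A}=\int_0^{+\infty}\PP\pr{A\cap\ens{\abs X>t}}\,dt$. The integrand is dominated by $\min\ens{a,N\pr X^p t^{-p}}$, the first term being the relevant bound for small $t$ and the second (which is precisely the content of the definition of $N$) for large $t$. Splitting the integral at the crossover point $s:=N\pr X a^{-1/p}$ and integrating each piece gives $\E{\abs X\mathbf 1_A}\leq \frac{p}{p-1}N\pr X\,a^{1-1/p}$; multiplying by $\PP\pr A^{1/p-1}=a^{1/p-1}$ and taking the supremum over $A$ produces $\norm X_{p,w}\leq \frac{p}{p-1}N\pr X$, so that $c_p=\pr{p-1}/p$ works. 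The main (and essentially only) subtlety is the choice of the splitting point, which is what makes both contributions scale like $a^{1-1/p}$, together with the convergence of the tail integral $\int_s^{+\infty}t^{-p}\,dt$; this is exactly where the hypothesis $p>1$ enters, whereas the upper bound $p<2$ plays no role in this lemma.
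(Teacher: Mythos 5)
Your proof is correct in all three steps: the Hölder bound for $\norm{X}_{p,w}\leq \norm{X}_p$, the test sets $A=\ens{\abs X>t}$ giving $N\pr{X}\leq \norm{X}_{p,w}$ with $C_p=1$, and the layer-cake argument with the splitting point $s=N\pr{X}\,\PP\pr{A}^{-1/p}$, which indeed yields $\norm{X}_{p,w}\leq \frac{p}{p-1}N\pr{X}$ and correctly isolates $p>1$ as the only hypothesis used. The paper itself states this lemma without proof, treating it as a standard fact on weak $\el^p$ (Lorentz) norms; your argument is precisely the classical one that fills this gap, so there is nothing in the paper to diverge from.
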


\subsubsection{Deviation inequalities}

The following deviation inequality is consequence of Theorem~2.1 in 
\cite{MR2462551}, which states that for a square integrable 
martingale differences sequence $\pr{d_j}_{j=1}^n$ with respect to a filtration $\pr{\Fca_j}_{j\geq 0}$, 
  \begin{equation}\label{eq:Bercu_Touati}
  \PP\pr{\ens{ \abs{ \sum_{j=1}^nd_j}>x}\cap \ens{\sum_{j=1}^n 
 \pr{ d_j^2 +\E{d_j^2\mid \Fca_{j-1}}}
   \leq y} 
  }\leq 2\exp\pr{-\frac{x^2}{2y}  }.
  \end{equation} 

 \begin{Proposition}\label{prop:inegalite_deviation_martingales}
  Let $\pr{d_j}_{j\geq 1}$ be a square integrable independent sequence. Then for all positive numbers $x$ and $y$, 
  the following inequality holds:
  \begin{equation}
  \PP\pr{\ens{ \abs{ \sum_{j=1}^nd_j}>x}\cap \ens{\sum_{j=1}^n 
  d_j^2 
   \leq y} 
  }\leq 2\exp\pr{-\frac{x^2}{2\pr{y+V^2}}  },
  \end{equation}   
  where $V^2=\sum_{j=1}^n\E{d_j^2}$.
 \end{Proposition}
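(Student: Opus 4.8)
The plan is to deduce the statement directly from the martingale deviation inequality \eqref{eq:Bercu_Touati}, exploiting the two simplifications that independence brings. First I would equip the sequence with its natural filtration $\Fca_j:=\sigma\pr{d_1,\dots,d_j}$, taking $\Fca_0$ to be the trivial $\sigma$-algebra. Viewing the (centered) independent sequence $\pr{d_j}_{j=1}^n$ as a square integrable martingale differences sequence with respect to $\pr{\Fca_j}_{j\geq 0}$, the inequality \eqref{eq:Bercu_Touati} applies to it.

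The key observation is that, since $d_j$ is independent of $\Fca_{j-1}=\sigma\pr{d_1,\dots,d_{j-1}}$, one has $\E{d_j^2\mid \Fca_{j-1}}=\E{d_j^2}$ almost surely. Consequently the predictable part of the quadratic variation appearing in \eqref{eq:Bercu_Touati} becomes deterministic, namely
\[
\sum_{j=1}^n\pr{d_j^2+\E{d_j^2\mid \Fca_{j-1}}}=\sum_{j=1}^nd_j^2+V^2,\qquad V^2=\sum_{j=1}^n\E{d_j^2}.
\]
From this identity I would derive the set inclusion
\[
\ens{\sum_{j=1}^nd_j^2\leq y}\subseteq \ens{\sum_{j=1}^n\pr{d_j^2+\E{d_j^2\mid \Fca_{j-1}}}\leq y+V^2},
\]
because on the left-hand event the right-hand sum equals $\sum_{j=1}^n d_j^2+V^2\leq y+V^2$. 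Intersecting both sides with $\ens{\abs{\sum_{j=1}^nd_j}>x}$ and then applying \eqref{eq:Bercu_Touati} with the threshold $y$ replaced by $y+V^2$ gives the bound $2\exp\pr{-x^2/\pr{2\pr{y+V^2}}}$, which is precisely the claimed inequality.

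I do not expect a genuine obstacle here: the entire content is the remark that, under independence, the conditional second moments entering Bercu and Touati's bound collapse to the constant $V^2$, which can be absorbed into the deviation threshold via the elementary set inclusion above. The only point requiring a word of care is the implicit centering of the $d_j$, which is what allows the independent sequence to be treated as a martingale differences sequence; once this is granted the proof reduces to the one-line application of \eqref{eq:Bercu_Touati} described above.
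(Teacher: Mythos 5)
Your proposal is correct and is essentially identical to the paper's own proof, which likewise applies \eqref{eq:Bercu_Touati} to the natural filtration $\Fca_i=\sigma\pr{d_j,1\leq j\leq i}$ with $y$ replaced by $y+V^2$, the conditional second moments collapsing to $V^2$ by independence. Your explicit set inclusion and the remark on the implicit centering of the $d_j$ simply spell out details the paper leaves tacit.
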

 This follows from an application of \eqref{eq:Bercu_Touati} to $\Fca_i=\sigma\pr{d_j,1\leq j\leq i}$ and 
 $y$ replaced by $y+V^2$.

The following inequality will be needed in the proof of Theorem~\ref{thm:linear_process_arbitrary_subsets}; 
this is a version of Proposition~A.1 in \cite{MR2682267}, stated in this paper for martingale differences but presented here for independent sequences.
 
\begin{Proposition}\label{prop:Freedman}
Let $\pr{d_j}_{j\geq 1}$ be an independent centered sequence such that there exists a $c>0$ for which $\abs{d_j}\leq c$ almost surely. 
Let $n\geq 1$ be an integer and let $y$ be a real number such that $y\geq \sum_{i=1}^n\E{d_i^2}$. For all $x>0$, the 
inequality 
\begin{equation}
 \PP\ens{\abs{\sum_{i=1}^nd_i}>x }\leq 2\exp\pr{-\frac{y}{c^2} h\pr{\frac{xc}y}}
\end{equation}
takes place, where $h\pr{u}=\pr{1+u}\ln\pr{1+u}-u$.
\end{Proposition}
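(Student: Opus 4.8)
The plan is to establish the one-sided tail bound
\[
\PP\ens{\textstyle\sum_{i=1}^n d_i>x}\leq \exp\pr{-\frac{y}{c^2}h\pr{\tfrac{xc}{y}}}
\]
by the Chernoff exponential method, and then to recover the two-sided statement by applying this estimate to the sequence $\pr{-d_i}_{i\geq 1}$, which is again independent, centered and bounded by $c$ with the same variances; summing the two one-sided tail bounds accounts for the factor $2$. Alternatively, since an independent centered sequence is a martingale difference sequence for its natural filtration $\Fca_i=\sigma\pr{d_j,\, 1\leq j\leq i}$, along which the predictable quadratic variation $\sum_{i=1}^n\E{d_i^2\mid \Fca_{i-1}}$ coincides with the deterministic quantity $\sum_{i=1}^n\E{d_i^2}\leq y$, the proposition is a direct specialization of the martingale version (Proposition~A.1 in \cite{MR2682267}); I nonetheless sketch the self-contained argument.

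The key ingredient is a sharp exponential moment estimate for a single bounded centered variable: if $\E{d}=0$ and $\abs{d}\leq c$ almost surely, then for every $\lambda>0$,
\[
\E{\exp\pr{\lambda d}}\leq \exp\pr{\frac{e^{\lambda c}-1-\lambda c}{c^2}\E{d^2}}.
\]
To obtain this I would use the elementary pointwise bound $e^{\lambda t}\leq 1+\lambda t+\frac{e^{\lambda c}-1-\lambda c}{c^2}t^2$ valid for all $t\in[-c,c]$, which follows from the monotonicity on $\R$ of the map $t\mapsto \pr{e^{\lambda t}-1-\lambda t}/t^2$ (extended by continuity at $0$); evaluating at $t=d$, taking expectations, invoking $\E{d}=0$ and then $1+u\leq e^u$ yields the displayed inequality.

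By independence we then have $\E{\exp\pr{\lambda\sum_{i=1}^n d_i}}=\prod_{i=1}^n\E{\exp\pr{\lambda d_i}}$, and since $e^{\lambda c}-1-\lambda c\geq 0$ the product is bounded, using $\sum_{i=1}^n\E{d_i^2}\leq y$, by $\exp\pr{\frac{y}{c^2}\pr{e^{\lambda c}-1-\lambda c}}$. Markov's inequality applied to $\exp\pr{\lambda\sum_i d_i}$ gives, for each $\lambda>0$,
\[
\PP\ens{\textstyle\sum_{i=1}^n d_i>x}\leq \exp\pr{-\lambda x+\frac{y}{c^2}\pr{e^{\lambda c}-1-\lambda c}}.
\]
It remains to minimize the exponent over $\lambda>0$: differentiation gives the optimizer $\lambda=\frac1c\ln\pr{1+\tfrac{xc}{y}}$, and substituting it, with $u=xc/y$, collapses the exponent to $-\frac{y}{c^2}\pr{\pr{1+u}\ln\pr{1+u}-u}=-\frac{y}{c^2}h\pr{\tfrac{xc}{y}}$, which is precisely the one-sided bound.

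The only genuinely delicate point is the single-variable exponential moment lemma, and within it the monotonicity of $t\mapsto \pr{e^{\lambda t}-1-\lambda t}/t^2$ underlying Bennett's inequality; once that is secured, the remaining work is the routine Chernoff optimization and the elementary simplification identifying the function $h$. I do not anticipate any real obstacle, the content being a classical Bennett--Freedman estimate transcribed from the martingale framework of \cite{MR2682267} to the simpler independent setting.
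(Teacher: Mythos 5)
Your proof is correct. Note first that the paper gives no self-contained argument for this proposition at all: it simply remarks that the statement is a version of Proposition~A.1 in \cite{MR2682267}, which is proved there for martingale differences, and that independent centered sequences are covered by it. Your second (alternative) observation --- that an independent centered sequence is a martingale difference sequence for its natural filtration $\Fca_i=\sigma\pr{d_j,\,1\leq j\leq i}$, along which the predictable quadratic variation $\sum_{i=1}^n\E{d_i^2\mid\Fca_{i-1}}$ is the deterministic quantity $\sum_{i=1}^n\E{d_i^2}\leq y$ --- is therefore exactly the paper's route. Your primary argument goes further and is genuinely different in character: it is the classical self-contained Bennett--Chernoff proof, resting on the single-variable bound $\E{e^{\lambda d}}\leq\exp\pr{\frac{e^{\lambda c}-1-\lambda c}{c^2}\E{d^2}}$, the factorization of the moment generating function by independence, Markov's inequality, optimization at $\lambda=\frac1c\ln\pr{1+\frac{xc}{y}}$, and symmetrization via $\pr{-d_i}$ to get the factor $2$; all of these steps check out, including the Chernoff algebra collapsing the exponent to $-\frac{y}{c^2}h\pr{\frac{xc}{y}}$. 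The one ingredient you invoke without proof, the monotonicity of $t\mapsto\pr{e^{\lambda t}-1-\lambda t}/t^2$, is indeed classical and can be justified in one line by the integral representation $\pr{e^{s}-1-s}/s^2=\int_0^1\pr{1-u}e^{su}\,\mathrm du$, which is visibly increasing in $s$. What each approach buys: the paper's citation is shorter and defers the analytic work to the literature, while your Chernoff argument makes the result fully self-contained and elementary, at the modest cost of reproving a standard exponential-moment estimate.
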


\begin{Lemma}\label{lem:Lemma_weak_type_estimate}
  Assume that $X$ and $Y$ are two 
  non-negative random variables such that for each positive $x$, 
  we have 
  \begin{equation}\label{eq:weak_type_assumption}
   x\PP\ens{X>x}\leqslant\mathbb 
  E\left[Y \mathbf 1\ens{X\geqslant x}\right].
  \end{equation}

  Then for each $t$, the following inequality holds:
  \begin{equation}
   \PP\ens{X>2t}\leqslant \int_1^{+\infty}\PP\ens{Y>st}\mathrm ds.
  \end{equation}
\end{Lemma}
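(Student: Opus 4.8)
The plan is to reduce the right-hand side to a truncated first moment of $Y$ and then feed the hypothesis into it at levels strictly below $2t$. I would work with $t>0$ (for $t\leq 0$ the inequality is trivial: then $\PP\ens{X>2t}\leq 1$ while the right-hand side is infinite, unless $Y=0$ almost surely, in which case the hypothesis forces $x\PP\ens{X>x}\leq 0$ for every $x>0$, hence $X=0$ almost surely). For $t>0$, the change of variable $u=st$ together with $Y\geq 0$ gives
\begin{equation*}
\int_1^{+\infty}\PP\ens{Y>st}\,\mathrm ds=\frac1t\int_t^{+\infty}\PP\ens{Y>u}\,\mathrm du=\frac1t\E{\pr{Y-t}_+},
\end{equation*}
so it suffices to establish $t\,\PP\ens{X>2t}\leq\E{\pr{Y-t}_+}$.

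Next I would exploit the hypothesis \eqref{eq:weak_type_assumption} together with the elementary splitting $Y=\pr{Y-t}_+ + Y\wedge t$, valid since $Y\geq 0$. Bounding the indicator by $1$ in the first piece and using $Y\wedge t\leq t$ in the second, one gets, for every $x>0$,
\begin{equation*}
x\,\PP\ens{X>x}\leq\E{Y\mathbf 1\ens{X\geq x}}\leq\E{\pr{Y-t}_+}+t\,\PP\ens{X\geq x}.
\end{equation*}

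The delicate point, and the step I expect to be the main obstacle, is that applying this directly at $x=2t$ produces $\PP\ens{X\geq 2t}$ on the right but only $\PP\ens{X>2t}$ on the left, and when $X$ has an atom at $2t$ these cannot be absorbed into one another. I would circumvent this by applying the displayed inequality at $x<2t$ and letting $x\uparrow 2t$: since $\ens{X>x}\downarrow\ens{X\geq 2t}$ and $\ens{X\geq x}\downarrow\ens{X\geq 2t}$ as $x\uparrow 2t$, continuity from above yields
\begin{equation*}
2t\,\PP\ens{X\geq 2t}\leq\E{\pr{Y-t}_+}+t\,\PP\ens{X\geq 2t},
\end{equation*}
whence $t\,\PP\ens{X\geq 2t}\leq\E{\pr{Y-t}_+}$. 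Combining this with $\PP\ens{X>2t}\leq\PP\ens{X\geq 2t}$ and the reduction of the first paragraph closes the argument. The only facts to verify carefully are the set limits $\lim_{x\uparrow 2t}\PP\ens{X>x}=\PP\ens{X\geq 2t}$ and $\lim_{x\uparrow 2t}\PP\ens{X\geq x}=\PP\ens{X\geq 2t}$, both immediate from monotone convergence for decreasing events; everything else is routine bookkeeping.
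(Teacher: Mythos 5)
Your proof is correct and complete. There is actually nothing in the paper to compare it against: Lemma~\ref{lem:Lemma_weak_type_estimate} is stated there without proof (it is a standard weak-type-to-tail estimate), so your argument stands on its own. The route you take --- rewriting the right-hand side as $\frac1t\E{\pr{Y-t}_+}$, splitting $Y=\pr{Y-t}_++Y\wedge t$ inside the hypothesis, and then removing the term $t\PP\ens{X\geq x}$ --- is the natural one, and your handling of the atom is genuinely needed rather than pedantic: applying \eqref{eq:weak_type_assumption} directly at $x=2t$ only yields $2\PP\ens{X>2t}-\PP\ens{X\geq 2t}\leq\frac1t\E{\pr{Y-t}_+}$, which is strictly weaker whenever $\PP\ens{X=2t}>0$, and the limit $x\uparrow 2t$ repairs exactly this defect. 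Two further points in your favour: because your intermediate bound $\E{\pr{Y-t}_+}+t\PP\ens{X\geq x}$ depends on $x$ only through a probability, the passage to the limit needs no integrability assumption on $Y$ (the case $\E{\pr{Y-t}_+}=+\infty$ being vacuous), whereas trying instead to upgrade the hypothesis itself to $x\PP\ens{X\geq x}\leq\E{Y\mathbf 1\ens{X\geq x}}$ by a similar limit would run into dominated-convergence issues; and your argument in fact delivers the slightly stronger conclusion $\PP\ens{X\geq 2t}\leq\int_1^{+\infty}\PP\ens{Y>st}\,\mathrm ds$.
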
 
 
\begin{Lemma}\label{lem:lem_moment_tronques_esp_cond}
Let $X$ be an integrable non-negative random variable and let $\Gca$ be a sub-$\sigma$-algebra of 
$\Fca$. For all real number $y$, the following inequality holds:
\begin{equation}
\int_{1/2}^{+\infty}\PP\ens{\E{X\mid \Gca}> yu}\mathrm du\leq 
2\int_{1/2}^{+\infty}\PP\ens{X> yu}\mathrm du.
\end{equation} 
 
\end{Lemma}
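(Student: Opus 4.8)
The plan is to rewrite both integrals by the layer-cake (Fubini--Tonelli) identity and then to compare the two resulting truncated first moments by a single use of the conditional Jensen inequality. Before doing so I would clear away the degenerate range $y\leq 0$. Since $X\geq 0$ forces $\E{X\mid \Gca}\geq 0$ almost surely, for $y<0$ every $u\geq 1/2$ satisfies $yu<0$, so both $\ens{\E{X\mid \Gca}>yu}$ and $\ens{X>yu}$ are almost sure and both integrals equal $+\infty$; for $y=0$ the integrands are the constants $\PP\ens{\E{X\mid \Gca}>0}$ and $\PP\ens{X>0}$, and since $X=0$ almost surely whenever $\PP\ens{X>0}=0$ (which then also forces $\E{X\mid \Gca}=0$ almost surely), the two integrals vanish or diverge together. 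In all these cases the inequality is immediate, so from now on I assume $y>0$.

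For $y>0$ I would substitute $t=yu$ and apply Tonelli's theorem to get, for any nonnegative random variable $W$,
\begin{equation*}
\int_{1/2}^{+\infty}\PP\ens{W>yu}\,\mathrm du=\frac1y\int_{y/2}^{+\infty}\PP\ens{W>t}\,\mathrm dt=\frac1y\E{\pr{W-y/2}^+}.
\end{equation*}
Using this identity once with $W=\E{X\mid \Gca}$ and once with $W=X$ turns the claim into the single scalar inequality
\begin{equation*}
\E{\pr{\E{X\mid \Gca}-y/2}^+}\leq 2\,\E{\pr{X-y/2}^+}.
\end{equation*}
All the quantities appearing here are finite because $X\in\mathbb L^1$ and $0\leq \pr{X-y/2}^+\leq X$, and the same domination transfers to $\E{X\mid \Gca}$.

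The last display is the core step, and it is short. The map $\phi(x)=\pr{x-y/2}^+$ is convex, so the conditional Jensen inequality gives
\begin{equation*}
\pr{\E{X\mid \Gca}-y/2}^+=\phi\pr{\E{X\mid \Gca}}\leq \E{\phi(X)\mid \Gca}=\E{\pr{X-y/2}^+\mid \Gca}\quad\text{almost surely,}
\end{equation*}
and taking expectations yields $\E{\pr{\E{X\mid \Gca}-y/2}^+}\leq \E{\pr{X-y/2}^+}$. This is the desired bound already with the constant $1$, hence a fortiori with the constant $2$ claimed in the statement. I do not anticipate a genuine obstacle: the only care needed is the integrability bookkeeping that legitimises conditional Jensen (supplied by $X\in\mathbb L^1$ and $0\leq\phi(X)\leq X$) and the separate treatment of $y\leq 0$; the factor $2$ is simply slack in this argument.
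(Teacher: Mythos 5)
Your proof is correct, and it takes a genuinely different route from the paper's. The paper normalizes to $y=1$ and works with the truncated first moment: it uses the identity $\E{Z\mathbf 1\ens{Z>1/2}}=\int_{1/2}^{\infty}\PP\ens{Z>t}\mathrm dt+\tfrac12\PP\ens{Z>1/2}$, replaces $\E{X\mid\Gca}$ by $X$ inside the expectation via the defining property of conditional expectation on the $\Gca$-measurable event $\ens{\E{X\mid\Gca}>1/2}$, and then splits $\E{X\mathbf 1\ens{\E{X\mid\Gca}>1/2}}$ into tails, absorbing the leftover term $\tfrac12\PP\ens{\E{X\mid\Gca}>1/2}$ back into the left-hand side; this absorption is precisely what produces the factor $2$. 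You instead convert both sides, by the layer-cake identity, into $\frac1y\E{\pr{W-y/2}^+}$ with $W=\E{X\mid\Gca}$ and $W=X$ respectively, and compare them in one stroke by conditional Jensen applied to the convex map $x\mapsto\pr{x-y/2}^+$; the integrability bookkeeping ($0\leq\pr{X-y/2}^+\leq X\in\mathbb L^1$) is exactly right. This is shorter, avoids any event-splitting, and yields the inequality with constant $1$, strictly sharper than the stated constant $2$. It is also more robust: the paper's absorption step as written is delicate (the claimed bound $\tfrac12\PP\ens{\E{X\mid\Gca}>1/2}\leq\tfrac12\int_{1/2}^{+\infty}\PP\ens{\E{X\mid\Gca}>u}\mathrm du$ fails, e.g., when $\E{X\mid\Gca}\equiv 1$, and the accounting needs to be redone by keeping the layer-cake identity as an equality), whereas Jensen sidesteps the issue entirely. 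Both arguments ultimately rest on the tower property; yours packages it more efficiently, and your separate treatment of $y\leq 0$ covers a case the paper's normalization $X\mapsto X/y$ tacitly ignores.
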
 
\begin{proof}
Replacing $X$ by $X/y$, there is no loss of generality by assuming that 
$y=1$. For any non-negative random variable $Z$, we have 
\begin{equation}
\E{Z\mathbf 1\ens{Z>1/2}}=\int_{1/2}^{\infty}\PP\ens{Z>t}\mathrm{d}t+
\frac 12\PP\ens{Z>1/2}.
\end{equation}
Therefore, 
\begin{equation}
\int_{1/2}^{+\infty}\PP\ens{\E{X\mid \Gca}> u}\mathrm du
\leq \E{\E{X\mid \Gca}  \mathbf 1\ens{\E{X\mid \Gca}>1/2}}
\end{equation}
and by definition of condition expectation, we get 
\begin{equation}\label{eq:lem_moment_tronques_esp_cond_etape_inter1}
\int_{1/2}^{+\infty}\PP\ens{\E{X\mid \Gca}> u}\mathrm du
\leq \E{X  \mathbf 1\ens{\E{X\mid \Gca}>1/2}}.
\end{equation}
 The last expectation can be written as 
 \begin{equation*}
 \int_{0}^{+\infty}
 \PP\pr{\ens{X >t}\cap \ens{\E{X\mid \Gca}>1/2}     }\mathrm dt
 \leq \frac 12\PP \ens{\E{X\mid \Gca}>1/2}  +\int_{1/2}^{+\infty}
     \PP \ens{X >t} \mathrm dt.
 \end{equation*}
 The first term of the right hand side does not exceed $1/2\int_{1/2}^{+\infty}\PP\ens{\E{X\mid \Gca}> u}\mathrm du$ hence 
 \begin{equation}\label{eq:lem_moment_tronques_esp_cond_etape_inter2}
 \E{X  \mathbf 1\ens{\E{X\mid \Gca}>1/2}}\leq 
 \frac 12\int_{1/2}^{+\infty}\PP\ens{\E{X\mid \Gca}> u}\mathrm du.
 \end{equation}
 We finish the proof by combining \eqref{eq:lem_moment_tronques_esp_cond_etape_inter1} 
 with \eqref{eq:lem_moment_tronques_esp_cond_etape_inter2}.
\end{proof}

 \begin{Proposition}\label{prop:extension_thm_ergodique_max}
 Let $\pr{Y_{\gri}}_{\gri\in\Z^d}$ be a strictly stationary random field
 such that each $Y_{\gri}$ is a non-negative random variable.
  Then for all positive $y$, 
 \begin{equation}
\PP\ens{\sup_{\grn \in \N^d}\frac 1{\abs{\grn}}
\sum_{\gr{1}\imd\gri\imd \grn }  Y_{\gri} 
>y  }
\leq   \int_{1}^{+\infty}\PP\ens{Y_{\gr{1}}>yu2^{-d}}
\pr{\log u}^{d-1}\mathrm du.
 \end{equation}
 \end{Proposition}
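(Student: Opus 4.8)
The plan is to reduce the $d$-dimensional maximal average to a composition of $d$ one-dimensional maximal ergodic operators and then to iterate Lemma~\ref{lem:Lemma_weak_type_estimate}. Write $M:=\sup_{\grn\in\N^d}\abs{\grn}^{-1}\sum_{\gr 1\imd\gri\imd\grn}Y_{\gri}$. I would set $Z^{(0)}:=\pr{Y_{\gri}}_{\gri\in\Z^d}$ and, for $1\le q\le d$, define the partial maximal average obtained by averaging out one coordinate anchored at $1$,
\begin{equation*}
Z^{(q)}_{\gri}:=\sup_{n\geq 1}\frac 1n\sum_{k=1}^n Z^{(q-1)}_{\pr{\gri,k}},\quad \gri\in\Z^{d-q},
\end{equation*}
so that $Z^{(q)}$ is a field indexed by $\Z^{d-q}$ and $Z^{(d)}$ is a single random variable. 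Averaging the coordinates of the rectangular sum one at a time from the innermost and bounding each Cesàro average by its maximal counterpart gives $M\le Z^{(d)}$, so it suffices to bound the tail of $Z^{(d)}$. A short induction shows that each $Z^{(q)}$ is again strictly stationary, since it is a fixed measurable functional of the stationary field $Z^{(q-1)}$.

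For each $q$, the value $Z^{(q)}_{\gr 1}$ is exactly the Hardy--Littlewood maximal function of the one-dimensional strictly stationary sequence $\pr{Z^{(q-1)}_{\pr{\gr 1,k}}}_{k\ge 1}$ obtained by freezing the remaining coordinates at $1$. The classical maximal ergodic (Hopf/Wiener) inequality then gives $x\PP\ens{Z^{(q)}_{\gr 1}>x}\le \E{Z^{(q-1)}_{\gr 1}\mathbf 1\ens{Z^{(q)}_{\gr 1}\ge x}}$ for every $x>0$; truncating $Z^{(q-1)}$ and passing to the limit by monotone convergence removes any a priori integrability requirement. This is precisely the weak-type hypothesis \eqref{eq:weak_type_assumption} of Lemma~\ref{lem:Lemma_weak_type_estimate} applied to $X=Z^{(q)}_{\gr 1}$ and $Y=Z^{(q-1)}_{\gr 1}$, whence
\begin{equation*}
\PP\ens{Z^{(q)}_{\gr 1}>2t}\le \int_1^{+\infty}\PP\ens{Z^{(q-1)}_{\gr 1}>st}\,\mathrm ds,\quad t>0.
\end{equation*}

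Writing $g_q(t):=\PP\ens{Z^{(q)}_{\gr 1}>t}$ and setting $G_q(t):=g_q\pr{2^qt}$ absorbs the factor $2$ and turns the recursion into the clean form $G_q(t)\le \int_1^{+\infty}G_{q-1}(st)\,\mathrm ds$, with $G_0(t)=\PP\ens{Y_{\gr 1}>t}$. Iterating $d$ times and using Tonelli yields $G_d(t)\le \int_{[1,+\infty)^d}G_0\pr{s_1\cdots s_d\,t}\,\mathrm ds_1\cdots \mathrm ds_d$. The step I expect to be the main computational point is collapsing this $d$-fold product integral to a single one: the substitution $s_i=e^{x_i}$ together with the fact that $\ens{x\in[0,+\infty)^d:\sum_i x_i\le v}$ has Lebesgue measure $v^d/d!$ gives, for any nonincreasing $\phi$,
\begin{equation*}
\int_{[1,+\infty)^d}\phi\pr{s_1\cdots s_d}\,\mathrm ds_1\cdots \mathrm ds_d=\frac{1}{(d-1)!}\int_1^{+\infty}\phi(u)\pr{\log u}^{d-1}\,\mathrm du\le \int_1^{+\infty}\phi(u)\pr{\log u}^{d-1}\,\mathrm du.
\end{equation*}
Applying this with $\phi=G_0(\,\cdot\,t)$ and recalling $G_d(t)=\PP\ens{Z^{(d)}>2^dt}$, the choice $y=2^dt$ gives $\PP\ens{Z^{(d)}>y}\le \int_1^{+\infty}\PP\ens{Y_{\gr 1}>yu2^{-d}}\pr{\log u}^{d-1}\,\mathrm du$, and $M\le Z^{(d)}$ finishes the proof. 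Beyond this change of variables, the only obstacles are organizational: checking that partial averaging preserves strict stationarity, and bookkeeping the factor-$2$ losses and the combinatorial integration so that exactly $2^{-d}$ and $\pr{\log u}^{d-1}$ emerge.
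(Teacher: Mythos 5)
Your proof is correct and follows essentially the same route as the paper: the paper's proof also defines the partial maximal averages one coordinate at a time (its $\widetilde{Y}$ is your $Z^{(1)}$), invokes the one-dimensional maximal ergodic theorem together with Lemma~\ref{lem:Lemma_weak_type_estimate} at each stage, and then rearranges the resulting iterated integrals, with your explicit simplex-volume change of variables simply making precise the paper's ``rearranging the integrals'' step (and even yielding the slightly sharper factor $1/(d-1)!$, which you rightly discard to match the statement).
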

 \begin{proof}
 The proof is done by induction on the dimension. For $d=1$, this follows from a 
 combination of the maximal ergodic theorem 
 with Lemma~\ref{lem:Lemma_weak_type_estimate}.
Suppose now that for some $d\geq 2$, Proposition~\ref{prop:extension_thm_ergodique_max}
holds for all $\pr{d-1}$-dimensional random
fields and let  $\pr{Y_{\gri}}_{\gri\in\Z^d}$ be 
a strictly stationary random field
 such that each $Y_{\gri}$ is a non-negative 
 random variable. For $i_1,\dots,i_{d-1}\in \N$, 
 define 
 \begin{equation}
 \widetilde{Y}_{i_1,\dots,i_{d-1}}:=
 \sup_{n\geq 1}\frac 1n\sum_{i=1}^nY_{i_1,i_2,\dots,i_{d-1},i}.
 \end{equation}
Then 
\begin{equation}
\sup_{\grn \in \N^d}\frac 1{\abs{\grn}}
\sum_{\gr{1}\imd\gri\imd \grn }  Y_{\gri}\leq 
\sup_{\grn \in \N^{d-1}}\frac 1{\abs{\grn}}
\sum_{\gr{1}\imd\gri \imd\grn}\widetilde{Y_{\gri}}. 
\end{equation}
Applying the induction hypothesis to the strictly stationary random field 
$\pr{\widetilde{Y_{\gri}}}_{\gri\in \Z^{d-1}}$, we get 
 \begin{equation}
\PP\ens{\sup_{\grn \smd \gr{1}}\frac 1{\abs{\grn}}
\sum_{\gr{1}\imd\gri\imd \grn }  Y_{\gri} 
>y  }
\leq \int_{1}^{+\infty}
\PP\ens{ \widetilde{Y_{\gr{0}}} >yu2^{1-d}}\pr{\log u}^{d-2}\mathrm du.
 \end{equation}
 Applying the one dimensional case to the strictly stationary sequence 
 $\pr{Y_{0,\dots,0,i}}_{i\geq 1}$ gives 
 \begin{equation}
\PP\ens{\sup_{\grn \in\N^d}\frac 1{\abs{\grn}}
\sum_{\gr{1}\imd\gri\imd \grn }  Y_{\gri} 
>y  }
\leq  \int_{1}^{+\infty}\int_{1}^{+\infty}\PP\ens{  Y_{\gr{0}} >yuv2^{ -d}}\pr{\log u}^{d-2}\mathrm dv\mathrm du.
 \end{equation}
 
 and rearranging the integrals ends the proof.
 \end{proof}
 
\subsubsection{Facts on Orlicz spaces} 
 
 \begin{Lemma}\label{lem:norme_Orlicz_c_fois_fct_YOug}
 Let $p\geq 1$ and $r\geq 0$. Let $\varphi:=\varphi_{p,q}$ and 
 let $a>0$ be a constant. There exists a constant $c$ depending only on 
 $a$, $p$ and $q$ such that for all random variable $X$, 
 \begin{equation}
 \norm{X}_{\varphi}\leq c\norm{X}_{a\varphi}.
 \end{equation}
 \end{Lemma}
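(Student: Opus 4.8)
The plan is to compare the two Luxemburg-type norms directly through their defining level sets. First I would observe that since $a\varphi$ and $\varphi_{p,r}$ differ only by the positive multiplicative constant $a$, they generate the same Orlicz space, so $\norm{X}_{a\varphi}<+\infty$ if and only if $\norm{X}_{\varphi}<+\infty$; in particular one may assume $\mu:=\norm{X}_{a\varphi}<+\infty$, as otherwise the claimed inequality is trivial. The case $a\geq 1$ is immediate: then $a\varphi\pr{x}\geq \varphi\pr{x}$ for all $x\geq 0$, so $\E{a\varphi\pr{X/\lambda}}\leq 1$ forces $\E{\varphi\pr{X/\lambda}}\leq 1$, whence the set of admissible $\lambda$ for $a\varphi$ is contained in that for $\varphi$ and $\norm{X}_{\varphi}\leq \norm{X}_{a\varphi}$. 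Thus $c=1$ works when $a\geq 1$, and the real content lies in the regime $0<a<1$.

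For $0<a<1$ the key step is a pointwise scaling estimate for $\varphi=\varphi_{p,r}$: for every $c\geq 1$ and $x\geq 0$,
\begin{equation*}
\varphi\pr{x/c}=\pr{x/c}^p\pr{1+\log\pr{1+x/c}}^r\leq c^{-p}x^p\pr{1+\log\pr{1+x}}^r=c^{-p}\varphi\pr{x},
\end{equation*}
where I use that $c\geq 1$ gives both $\pr{x/c}^p=c^{-p}x^p$ and, since $x/c\leq x$ and $\log$ is nondecreasing, $1+\log\pr{1+x/c}\leq 1+\log\pr{1+x}$. Applying this with $c=a^{-1/p}>1$ and $\lambda=\mu$ yields $\E{\varphi\pr{X/\pr{c\mu}}}\leq c^{-p}\E{\varphi\pr{X/\mu}}$, and I would bound the right-hand factor using that the infimum defining $\norm{X}_{a\varphi}$ is attained, that is $\E{a\varphi\pr{X/\mu}}\leq 1$, so $\E{\varphi\pr{X/\mu}}\leq 1/a$. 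Combining, $\E{\varphi\pr{X/\pr{c\mu}}}\leq c^{-p}/a=a/a=1$, which shows $\norm{X}_{\varphi}\leq c\mu=a^{-1/p}\norm{X}_{a\varphi}$.

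Putting the two regimes together, the constant $c=\max\ens{1,a^{-1/p}}$ works, and it depends only on $a$ and $p$ (indeed not on $r$ at all, consistent with the stated dependence). The one point requiring care — the closest thing to an obstacle — is justifying that $\E{a\varphi\pr{X/\mu}}\leq 1$ at the infimal value $\mu$; I would handle this by noting that $\lambda\mapsto \E{\varphi\pr{X/\lambda}}$ is nonincreasing and, by monotone convergence, right-continuous in $\lambda$, so the admissible set $\ens{\lambda>0\mid \E{a\varphi\pr{X/\lambda}}\leq 1}$ contains its left endpoint $\mu$. Everything else reduces to the elementary scaling computation above.
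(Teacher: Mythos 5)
Your proof is correct, and it shares the paper's overall strategy (a Luxemburg-norm scaling argument: show that $c\,\norm{X}_{a\varphi}$ is admissible for $\varphi$ by bounding $\E{\varphi\pr{X/\pr{c\norm{X}_{a\varphi}}}}$ by $1$), but the key inequality is genuinely different. The paper normalizes to $\norm{X}_{a\varphi}=1$ and invokes submultiplicativity, $\varphi\pr{uv}\leq K\varphi\pr{u}\varphi\pr{v}$, choosing $c$ so that $K\varphi\pr{1/c}/a\leq 1$; this argument works for any submultiplicative Orlicz function, but the resulting constant is implicit and depends on $r$ through the logarithmic factor in $\varphi\pr{1/c}$. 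You instead split into the trivial regime $a\geq 1$ and the regime $a<1$, where you use the pointwise bound $\varphi\pr{x/c}\leq c^{-p}\varphi\pr{x}$ for $c\geq 1$, valid because the power part scales exactly while the logarithmic factor is monotone and can be absorbed into $\varphi\pr{x}$ rather than estimated by a constant; this exploits the specific form of $\varphi_{p,r}$ and buys you the explicit constant $c=\max\ens{1,a^{-1/p}}$, independent of $r$. A further point in your favor: the paper asserts $\E{a\varphi\pr{X}}=1$ after normalization, whereas at the infimal value $\mu$ one can only guarantee $\E{a\varphi\pr{X/\mu}}\leq 1$, and even that requires the monotone-convergence argument you spell out; your handling of this attainment issue is the more careful of the two. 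The only micro-omission in your write-up is the degenerate case $\norm{X}_{a\varphi}=0$, where $X=0$ almost surely and the inequality is trivial; this is not a genuine gap.
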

 
 \begin{proof}
 By homogeneity, it suffices to prove that for all random variable $X$ 
 such that $\norm{X}_{a\varphi}=1$, the equality $ \norm{X}_{\varphi}\leq 
 c$ holds for some $c$ depending only on 
 $a$, $p$ and $q$. Let $X$ be a random variable such that 
 $\norm{X}_{a\varphi}=1$. Then we know that $\E{
 a\varphi\pr{X}}=1$. Since there exists a constant $K$ such that 
 $\varphi\pr{uv}\leq K\varphi\pr{u}\varphi\pr{v}$, we derive that 
 \begin{equation*}
  \E{\varphi\pr{X/ c}}
  \leq \frac 1a aK\E{\varphi\pr{X}\varphi\pr{1/c}}
  = K\frac 1a\varphi\pr{1/b}.
 \end{equation*}
 We choose $c$ such that $K\frac 1a\varphi\pr{1/c}\leq 1$; for such 
 a $c$, inequality $\norm{X}_{\varphi}\leq c$ holds, which ends 
 the proof.
 \end{proof}

\begin{Lemma}\label{lem:norm_Orlicz_puissances}
Let $r\geq 0$. There exists a constant $c_r$ such that for any random variable $X$, 
\begin{equation}\label{eq:norm_Orlicz_carre}
\norm{X^2}_{1,r}\leq c_r\norm{X}_{2,r}^2;
\end{equation}
\begin{equation}\label{eq:norm_Orlicz_racine}
\norm{X^{1/2}}_{2,r}\leq c_r\norm{X}_{1,r}^{1/2}.
\end{equation}
\end{Lemma}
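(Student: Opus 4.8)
The plan is to reduce both inequalities to two elementary pointwise comparisons between the Young functions $\varphi_{1,r}$ and $\varphi_{2,r}$ after composition with the squaring and the square-root maps, and then to turn these pointwise bounds into norm inequalities by invoking Lemma~\ref{lem:norme_Orlicz_c_fois_fct_YOug}. Concretely, the first step is to establish, for every $t\geq 0$,
\begin{equation}
\varphi_{1,r}\pr{t^2}\leq 2^r\varphi_{2,r}\pr{t}, \qquad
\varphi_{2,r}\pr{\sqrt t}\leq \pr{1+\log 2}^r\varphi_{1,r}\pr{t}.
\end{equation}
For the first, one writes $\varphi_{1,r}\pr{t^2}=t^2\pr{1+\log\pr{1+t^2}}^r$ and uses $1+t^2\leq\pr{1+t}^2$ to get $\log\pr{1+t^2}\leq 2\log\pr{1+t}$, hence $1+\log\pr{1+t^2}\leq 2\pr{1+\log\pr{1+t}}$. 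For the second, $\varphi_{2,r}\pr{\sqrt t}=t\pr{1+\log\pr{1+\sqrt t}}^r$, and since $\sqrt t\leq 1+t$ one has $1+\sqrt t\leq 2\pr{1+t}$, so $1+\log\pr{1+\sqrt t}\leq\pr{1+\log 2}\pr{1+\log\pr{1+t}}$. The only subtlety is that these logarithmic comparisons must hold uniformly over all $t\geq 0$, including small $t$; the additive constant $1$ inside the factor $\pr{1+\log\pr{1+\cdot}}$ supplies exactly the cushion needed near $t=0$.

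\textbf{First inequality.} Since $\norm{\cdot}_{p,r}$ depends only on $\abs X$, set $\lambda:=\norm{X}_{2,r}$ (the case $\lambda=0$ being trivial). By definition $\E{\varphi_{2,r}\pr{\abs X/\lambda}}\leq 1$, so applying the first pointwise bound with $t=\abs X/\lambda$ gives
\begin{equation}
\E{\varphi_{1,r}\pr{X^2/\lambda^2}}=\E{\varphi_{1,r}\pr{\pr{\abs X/\lambda}^2}}
\leq 2^r\E{\varphi_{2,r}\pr{\abs X/\lambda}}\leq 2^r.
\end{equation}
Thus $\E{2^{-r}\varphi_{1,r}\pr{X^2/\lambda^2}}\leq 1$, which means $\norm{X^2}_{2^{-r}\varphi_{1,r}}\leq\lambda^2$. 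Lemma~\ref{lem:norme_Orlicz_c_fois_fct_YOug}, applied with $\varphi=\varphi_{1,r}$ and $a=2^{-r}$, then yields $\norm{X^2}_{1,r}\leq c\,\norm{X^2}_{2^{-r}\varphi_{1,r}}\leq c\,\norm{X}_{2,r}^2$, which is \eqref{eq:norm_Orlicz_carre}.

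\textbf{Second inequality.} Here $X\geq 0$. Set $\mu:=\norm{X}_{1,r}$, so that $\E{\varphi_{1,r}\pr{X/\mu}}\leq 1$. Using the second pointwise bound with $t=X/\mu$,
\begin{equation}
\E{\varphi_{2,r}\pr{X^{1/2}/\mu^{1/2}}}=\E{\varphi_{2,r}\pr{\pr{X/\mu}^{1/2}}}
\leq\pr{1+\log 2}^r\E{\varphi_{1,r}\pr{X/\mu}}\leq\pr{1+\log 2}^r.
\end{equation}
With $b:=\pr{1+\log 2}^{-r}$ this reads $\norm{X^{1/2}}_{b\varphi_{2,r}}\leq\mu^{1/2}$, and Lemma~\ref{lem:norme_Orlicz_c_fois_fct_YOug} with $\varphi=\varphi_{2,r}$ and $a=b$ gives $\norm{X^{1/2}}_{2,r}\leq c\,\norm{X^{1/2}}_{b\varphi_{2,r}}\leq c\,\norm{X}_{1,r}^{1/2}$, which is \eqref{eq:norm_Orlicz_racine}.

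The one place requiring care is the interaction between the quadratic rescaling (the Orlicz norm of $X^2$ scales like $\lambda^2$, that of $X^{1/2}$ like $\mu^{1/2}$) and the fixed multiplicative constants $2^r$ and $\pr{1+\log 2}^r$ produced in the pointwise step; this is precisely what Lemma~\ref{lem:norme_Orlicz_c_fois_fct_YOug} is designed to absorb, since multiplying the Young function by a constant $a>0$ alters the Orlicz norm only by a factor depending on $a$, $p$ and $r$. I expect the pointwise comparisons to be the sole genuinely new computation, with the constant $c_r$ emerging as the product of the constant furnished by Lemma~\ref{lem:norme_Orlicz_c_fois_fct_YOug} and a power of $2$ (respectively of $1+\log 2$).
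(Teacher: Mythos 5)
Your proof is correct and follows essentially the same route as the paper: a pointwise comparison between $\varphi_{1,r}\pr{t^2}$ and $\varphi_{2,r}\pr{t}$, followed by an appeal to Lemma~\ref{lem:norme_Orlicz_c_fois_fct_YOug} to absorb the resulting multiplicative constant into the Orlicz norm. The only difference is cosmetic: the paper merely asserts the two-sided pointwise equivalence, whereas you supply the explicit elementary derivation with constants $2^r$ and $\pr{1+\log 2}^r$.
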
 
\begin{proof}
We use the fact that there exists a constant $c$ depending only on 
$r$ such that for all positive $x$, 
\begin{equation}
 c^{-1}\varphi_{1,r}\pr{x^2}\leq 
\varphi_{2,r}\pr{x}\leq c\varphi_{1,r}\pr{x^2}
\end{equation}
 
Let us prove \eqref{eq:norm_Orlicz_carre}. By homogeneity, we assume that 
$\norm{X}_{2,r}=1$. Then  
\begin{equation}
\E{\varphi_{1,r}\pr{X^2 }}\leqslant c
\E{\varphi_{2,r}\pr{X }}\leq c,
\end{equation}
which shows that $\norm{X^2}_{ \varphi_{2,r}/c}\leq 1$
and we conclude by applying Lemma~\ref{lem:norme_Orlicz_c_fois_fct_YOug}.

The proof of \eqref{eq:norm_Orlicz_racine} follows exactly the 
same lines. 

\end{proof}

A tool will be the following version of Burkholder's inequality, which is a consequence 
of the combination of Lemmas~3.1 and 6.1 in \cite{MR0365692} (which is valid, 
since $\varphi_{2,r}$ satisfies $\sup_{t>0}\varphi_{2,r}\pr{2t}/\varphi_{2,r}\pr{t}<+\infty$).

\begin{Proposition}\label{prop:Burlholder_Orlicz}
Let $r\geq 0$ be a real number and let $\pr{d_k}_{k=1}^n$ be a martingale 
differences sequence with respect to the filtration $\pr{\Fca_k} _{k=0}^n$. 
The following inequality holds 
\begin{equation}
\norm{\sum_{k=1}^nd_k}_{2,r}\leq C_r \norm{\max_{1\leq k\leq n}\abs{d_k}}_{2,r}
+C_r\norm{\pr{\sum_{k=1}^n\E{d_k^2\mid \Fca_{k-1}}}^{1/2}}_{2,r}.
\end{equation} 
\end{Proposition}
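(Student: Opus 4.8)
The plan is to recognise the inequality as a Burkholder-type bound in the Orlicz space $\el_{2,r}$ and to derive it from the distribution-function inequalities of \cite{MR0365692}. The preliminary observation is that $\varphi_{2,r}$ is a moderate Young function: it vanishes at $0$, is nondecreasing, and, as noted, satisfies $\sup_{t>0}\varphi_{2,r}\pr{2t}/\varphi_{2,r}\pr{t}<+\infty$. This $\Delta_2$ growth is exactly the hypothesis making the $\Phi$-mean martingale inequalities of \cite{MR0365692} applicable, and, through Lemma~\ref{lem:norme_Orlicz_c_fois_fct_YOug}, it also renders the control of $\E{\varphi_{2,r}\pr{X/\lambda}}$ equivalent, up to constants depending only on $r$, to the control of $\norm{X}_{2,r}$. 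I would therefore argue with $\Phi$-means and return to the Orlicz norm only at the very end.

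First I would reduce the left-hand side to the square function $S:=\pr{\sum_{k=1}^n d_k^2}^{1/2}$. The Burkholder--Davis--Gundy comparison in $\Phi$-mean, valid for moderate $\Phi$, gives $\norm{\sum_{k=1}^n d_k}_{2,r}\leq C_r\norm{S}_{2,r}$, so the problem becomes that of estimating $\norm{S}_{2,r}$ by the maximal jump $d^*:=\max_{1\leq k\leq n}\abs{d_k}$ and the conditional square function $\sigma:=\pr{\sum_{k=1}^n\E{d_k^2\mid \Fca_{k-1}}}^{1/2}$. Passing to squares via \eqref{eq:norm_Orlicz_racine} of Lemma~\ref{lem:norm_Orlicz_puissances} applied to $S^2$, one has $\norm{S}_{2,r}\leq c_r\norm{S^2}_{1,r}^{1/2}$, and it suffices to bound $\norm{\sum_{k=1}^n d_k^2}_{1,r}$ in $\el_{1,r}$.

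I would then split $\sum_{k=1}^n d_k^2=T_n+\sigma^2$, where $T_n:=\sum_{k=1}^n e_k$ with $e_k:=d_k^2-\E{d_k^2\mid \Fca_{k-1}}$ is a martingale and $\sigma^2=\sum_{k=1}^n\E{d_k^2\mid \Fca_{k-1}}$. The second summand is harmless: estimate \eqref{eq:norm_Orlicz_carre} of Lemma~\ref{lem:norm_Orlicz_puissances} yields $\norm{\sigma^2}_{1,r}\leq c_r\norm{\sigma}_{2,r}^2$, which furnishes the second term of the announced bound.

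The main obstacle is the martingale of squares $T_n$, and this is where the good-$\lambda$ machinery of \cite{MR0365692} is needed. Applying the $\Phi$-mean comparison for $\Phi=\varphi_{1,r}$ to $T_n$ reduces matters to its square function $\pr{\sum_{k=1}^n e_k^2}^{1/2}$; bounding the jumps by $\abs{e_k}\leq \pr{d^*}^2+\sigma^2$ and their sum by $\sum_{k=1}^n\abs{e_k}\leq S^2+\sigma^2$, and using $\sum_{k=1}^n e_k^2\leq \pr{\max_{1\leq k\leq n}\abs{e_k}}\sum_{k=1}^n\abs{e_k}$, produces a cross term of the form $d^*\, S$ in which the square function $S$ reappears. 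The delicate point is thus an absorption argument: one invokes the distributional good-$\lambda$ inequality of \cite{MR0365692} and converts it into a $\Phi$-mean statement by the conversion lemma there, legitimate because $\varphi_{2,r}$ is $\Delta_2$, in order to transfer the $\norm{S}_{2,r}$ contribution back to the left-hand side and retain only $\norm{d^*}_{2,r}$ and $\norm{\sigma}_{2,r}$. Combining this with the two previous reductions and passing from $\Phi$-means back to Orlicz norms via Lemma~\ref{lem:norme_Orlicz_c_fois_fct_YOug} delivers the stated inequality with a constant $C_r$ depending only on $r$.
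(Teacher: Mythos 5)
Your overall architecture is genuinely different from the paper's treatment, so it is worth comparing. The paper does not reprove anything: Proposition~\ref{prop:Burlholder_Orlicz} is justified there by a one-line citation, namely that the inequality \emph{is} Burkholder's $\Phi$-moment martingale inequality for the moderate function $\varphi_{2,r}$, obtained by combining the two cited lemmas of \cite{MR0365692} (a distribution-function inequality involving the conditional square function and the maximal jump, and the lemma converting such good-$\lambda$ inequalities into $\Phi$-moment inequalities). You instead reconstruct the result from more primitive ingredients: the BDG comparison $\norm{\sum_k d_k}_{2,r}\leq C_r\norm{S}_{2,r}$ with $S=\pr{\sum_k d_k^2}^{1/2}$, the passage to squares via Lemma~\ref{lem:norm_Orlicz_puissances}, the splitting $S^2=T_n+\sigma^2$ with $T_n=\sum_k\pr{d_k^2-\E{d_k^2\mid\Fca_{k-1}}}$ a martingale, a second BDG application to $T_n$ in $\el_{1,r}$, and an absorption. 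All of these reductions, including the bounds $\max_k\abs{e_k}\leq\pr{d^*}^2+\sigma^2$ and $\sum_k\abs{e_k}\leq S^2+\sigma^2$, are correct.

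The gap is in the final absorption step, which is the crux, and as written it is not an executable argument. A good-$\lambda$ inequality plus the conversion lemma yields a comparison $\E{\Phi\pr{g}}\leq C\,\E{\Phi\pr{h}}$ between two \emph{fixed} random variables; it cannot "transfer the $\norm{S}_{2,r}$ contribution back to the left-hand side", which is an algebraic operation on norms, not a distributional one. Worse, the only distribution-function inequality in \cite{MR0365692} that relates a martingale to $d^*$ and $\sigma$ is precisely the statement being proved, so invoking it at this point is circular and makes your whole construction redundant (that invocation, done at the outset, is exactly the paper's proof). What actually closes your argument is elementary and should replace that sentence. First, an Orlicz--H\"older inequality $\norm{XY}_{1,r}\leq c_r\norm{X}_{2,r}\norm{Y}_{2,r}$, which follows from the pointwise bound $\varphi_{1,r}\pr{xy}\leq 2^r\pr{\varphi_{2,r}\pr{x}+\varphi_{2,r}\pr{y}}$ (for $x\leq y$ use $xy\leq y^2$ and $1+\log\pr{1+y^2}\leq 2\pr{1+\log\pr{1+y}}$) together with Lemma~\ref{lem:norme_Orlicz_c_fois_fct_YOug}; this controls all four terms of $\pr{d^*+\sigma}\pr{S+\sigma}$. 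Second, the quadratic absorption: writing $a=\norm{S}_{2,r}$, $b=\norm{\max_{1\leq k\leq n}\abs{d_k}}_{2,r}$, $c=\norm{\sigma}_{2,r}$, your chain of estimates gives $a^2\leq C_r\pr{b+c}\pr{a+c}+C_rc^2$, hence $a\leq C_r'\pr{b+c}$ \emph{provided} $a<+\infty$; this finiteness must be checked, and it holds because $S\leq\sqrt n\,\max_k\abs{d_k}$ gives $a\leq\sqrt n\,b$, while if $b=+\infty$ there is nothing to prove. With these two points spelled out, your proof is complete and constitutes an honest, more self-contained alternative to the citation used in the paper, at the price of a longer argument and worse constants.
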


Using Lemma~\ref{lem:norme_Orlicz_c_fois_fct_YOug}, we derive that 
\begin{multline}
\norm{\pr{\sum_{k=1}^n\E{d_k^2\mid \Fca_{k-1}}}^{1/2}}_{2,r}\leq 
c_r \norm{ \sum_{k=1}^n\E{d_k^2\mid \Fca_{k-1}}}_{1,r}^{1/2}\\
\leq c_r\pr{\sum_{k=1}^n \norm{ \E{d_k^2\mid \Fca_{k-1}}}_{1,r}}^{1/2}
\end{multline}
and the following corollary.
\begin{Corollary}\label{cor:Burlholder_Orlicz}
Let $r\geq 0$ be a real number and let $\pr{d_k}_{k=1}^n$ be a martingale 
differences sequence with respect to the filtration $\pr{\Fca_k} _{k=0}^n$. 
The following inequalities hold
\begin{equation}
\norm{\sum_{k=1}^nd_k}_{2,r}\leq C_r \norm{\max_{1\leq k\leq n}\abs{d_k}}_{2,r}
+C_r\pr{\sum_{k=1}^n \norm{ \E{d_k^2\mid \Fca_{k-1}}}_{1,r}}^{1/2};
\end{equation} 
\begin{equation}
\norm{\sum_{k=1}^nd_k}_{2,r}^2\leq K_r \sum_{k=1}^n\norm{d_k}_{2,r}^2 ,
\end{equation}
where $C_r$ and $K_r$ depend only on $r$.
\end{Corollary}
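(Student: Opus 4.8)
The plan is to treat the two inequalities separately, since the first one is essentially already carried out in the computation displayed just before the statement. For the first inequality I would start from Proposition~\ref{prop:Burlholder_Orlicz} and feed in that displayed chain: applying \eqref{eq:norm_Orlicz_racine} of Lemma~\ref{lem:norm_Orlicz_puissances} to $X=\sum_{k=1}^n\E{d_k^2\mid\Fca_{k-1}}$ replaces the $\norm{\cdot}_{2,r}$-norm of the square root of the quadratic characteristic by the square root of its $\norm{\cdot}_{1,r}$-norm, and the subadditivity of $\norm{\cdot}_{1,r}$ (the triangle inequality for the Orlicz norm) distributes this norm over the sum. Substituting into Proposition~\ref{prop:Burlholder_Orlicz} gives the first displayed inequality, with $C_r$ absorbing the constant from Lemma~\ref{lem:norm_Orlicz_puissances}.

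For the second inequality I would square the first one using $\pr{a+b}^2\leq 2a^2+2b^2$ and bound each of the two resulting terms by $\sum_{k=1}^n\norm{d_k}_{2,r}^2$. For the maximal term, apply \eqref{eq:norm_Orlicz_racine} to $X=\pr{\max_{1\leq k\leq n}\abs{d_k}}^2$ to obtain $\norm{\max_{1\leq k\leq n}\abs{d_k}}_{2,r}^2\leq c_r\norm{\max_{1\leq k\leq n}d_k^2}_{1,r}$; since $\varphi_{1,r}$ is non-decreasing the norm $\norm{\cdot}_{1,r}$ is monotone, so the pointwise bound $\max_{1\leq k\leq n}d_k^2\leq\sum_{k=1}^nd_k^2$ gives $\norm{\max_{1\leq k\leq n}d_k^2}_{1,r}\leq\norm{\sum_{k=1}^nd_k^2}_{1,r}$, and subadditivity followed by \eqref{eq:norm_Orlicz_carre} bounds this by $c_r\sum_{k=1}^n\norm{d_k}_{2,r}^2$. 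For the conditional variance term I need the contraction $\norm{\E{d_k^2\mid\Fca_{k-1}}}_{1,r}\leq\norm{d_k^2}_{1,r}$, after which \eqref{eq:norm_Orlicz_carre} again gives $\norm{d_k^2}_{1,r}\leq c_r\norm{d_k}_{2,r}^2$. Collecting these two estimates yields a constant $K_r$ depending only on $r$.

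The main obstacle is justifying the contraction of the conditional expectation on $\el_{1,r}$. This holds whenever the defining function is convex, via the conditional Jensen inequality: if $\varphi_{1,r}$ is convex then $\varphi_{1,r}\pr{\E{d_k^2\mid\Fca_{k-1}}/\lambda}\leq\E{\varphi_{1,r}\pr{d_k^2/\lambda}\mid\Fca_{k-1}}$, and taking expectations shows that every $\lambda$ admissible for $d_k^2$ is admissible for $\E{d_k^2\mid\Fca_{k-1}}$. I would therefore check that $\varphi_{1,r}\pr{x}=x\pr{1+\log\pr{1+x}}^r$ is convex on $[0,+\infty)$ (it behaves like $x+rx^2$ near the origin and like $x\pr{\log x}^r$ at infinity, both convex), or, if convexity fails on a bounded initial segment, replace $\varphi_{1,r}$ by an equivalent convex Young function and absorb the loss of constants through Lemma~\ref{lem:norme_Orlicz_c_fois_fct_YOug}. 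Alternatively one may bypass convexity altogether by passing to tails: Lemma~\ref{lem:lem_moment_tronques_esp_cond} compares the truncated tail integrals of $\E{d_k^2\mid\Fca_{k-1}}$ and of $d_k^2$, and combined with the representation of $\E{\varphi_{1,r}\pr{\cdot}}$ as an integral of the tail this delivers the same contraction up to a numerical constant.
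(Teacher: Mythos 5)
Your proof is correct and matches the paper's approach: the first inequality is precisely the computation displayed just before the statement (Proposition~\ref{prop:Burlholder_Orlicz} combined with \eqref{eq:norm_Orlicz_racine} and the triangle inequality in $\el_{1,r}$), and the second follows, as you do it, by squaring, using $\max_{1\leq k\leq n}d_k^2\leq\sum_{k=1}^n d_k^2$ together with \eqref{eq:norm_Orlicz_carre}, and contracting the conditional expectation. Your convexity hedge is unnecessary: $\varphi_{1,r}$ is convex on $[0,+\infty)$ for every $r\geq 0$ (for $0<r<1$ the only negative term in $\varphi_{1,r}''$ is dominated, since the required inequality reduces to $\pr{1+\log\pr{1+x}}\pr{2+x}\geq x\pr{1-r}$, which is clear), so conditional Jensen applies directly and the fallback via Lemma~\ref{lem:lem_moment_tronques_esp_cond} is not needed.
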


Observe also that 
\begin{equation}\label{eq:ineg_norm_1r_du_carre}
\norm{X^2}_{1,r}\leq c_r\norm{X}_{2,r}^2.
\end{equation}

\begin{Lemma}\label{lem:from_weak_type_to_Orlicz}
For all $p>1$ and $r\geq 0$, there exists a constant $c_{p,r}$ such that if 
 $X$ and $Y$ are two 
  non-negative random variables satisfying for each positive $x$, 
  \begin{equation} 
   x\PP\ens{X>x}\leqslant\mathbb 
  E\left[Y \mathbf 1\ens{X\geqslant x}\right],
  \end{equation}
  then $\norm{X}_{p,r}\leq c_{p,r}\norm{Y}_{p,r}$. 
\end{Lemma}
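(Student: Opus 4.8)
The plan is to establish the inequality first at the level of the Orlicz functional $\E{\varphi_{p,r}\pr{\cdot}}$ and then translate it back into a norm bound. By the homogeneity of the hypothesis under the simultaneous scaling $\pr{X,Y}\mapsto \pr{X/\lambda,Y/\lambda}$, I may assume $\norm{Y}_{p,r}=1$, so that $\E{\varphi_{p,r}\pr{Y}}\leq 1$, and it then suffices to produce a constant $c=c_{p,r}$ for which $\E{\varphi_{p,r}\pr{X/c}}\leq 1$.

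The first step is to feed the weak-type hypothesis into Lemma~\ref{lem:Lemma_weak_type_estimate}, which yields $\PP\ens{X>2t}\leq \int_1^{+\infty}\PP\ens{Y>st}\,\mathrm ds$ for every $t>0$. Writing the Orlicz functional of $X/2$ through the layer-cake formula $\E{\varphi_{p,r}\pr{X/2}}=\int_0^{+\infty}\varphi_{p,r}'\pr{t}\PP\ens{X>2t}\,\mathrm dt$, inserting this tail bound, and applying Fubini's theorem, I obtain $\E{\varphi_{p,r}\pr{X/2}}\leq \int_1^{+\infty}\pr{\int_0^{+\infty}\varphi_{p,r}'\pr{t}\PP\ens{Y>st}\,\mathrm dt}\mathrm ds$. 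The change of variable $u=st$ turns the inner integral into $s^{-1}\int_0^{+\infty}\varphi_{p,r}'\pr{u/s}\PP\ens{Y>u}\,\mathrm du$.

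The crux is the elementary pointwise estimate that for $r\geq 0$ and $p>1$ there is a constant $C_{p,r}$ with $\varphi_{p,r}'\pr{u/s}\leq C_{p,r}\,s^{1-p}\varphi_{p,r}'\pr{u}$ for all $u>0$ and all $s\geq 1$. Differentiating $\varphi_{p,r}\pr{x}=x^p\pr{1+\log\pr{1+x}}^r$ produces two terms: in the leading term the factor $\pr{u/s}^{p-1}=s^{1-p}u^{p-1}$ supplies the gain $s^{1-p}$ while the logarithmic factor only decreases since $u/s\leq u$, and the subleading logarithmic-derivative term is dominated by the same bound after treating separately the cases $r\geq 1$ and $0\leq r<1$. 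Substituting this and recognising $\int_0^{+\infty}\varphi_{p,r}'\pr{u}\PP\ens{Y>u}\,\mathrm du=\E{\varphi_{p,r}\pr{Y}}$, I reach $\E{\varphi_{p,r}\pr{X/2}}\leq C_{p,r}\E{\varphi_{p,r}\pr{Y}}\int_1^{+\infty}s^{-p}\,\mathrm ds$; the $s$-integral equals $\pr{p-1}^{-1}$ and is finite precisely because $p>1$. Hence $\E{\varphi_{p,r}\pr{X/2}}\leq C'_{p,r}$ for a constant depending only on $p$ and $r$.

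It remains to absorb the constant $C'_{p,r}$. Since $\E{\pr{C'_{p,r}}^{-1}\varphi_{p,r}\pr{X/2}}\leq 1$, the random variable $X/2$ has norm at most $1$ for the Orlicz function $\pr{C'_{p,r}}^{-1}\varphi_{p,r}$, whence $\norm{X}_{\pr{C'_{p,r}}^{-1}\varphi_{p,r}}\leq 2$. Applying Lemma~\ref{lem:norme_Orlicz_c_fois_fct_YOug} with $a=\pr{C'_{p,r}}^{-1}$ converts this into a bound on $\norm{X}_{p,r}=\norm{X}_{\varphi_{p,r}}$ by a constant depending only on $p$ and $r$, which combined with the homogeneity reduction yields $\norm{X}_{p,r}\leq c_{p,r}\norm{Y}_{p,r}$. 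The main obstacle is the pointwise derivative inequality together with the convergence of the final integral in $s$: this is exactly where the hypothesis $p>1$ is used and cannot be relaxed.
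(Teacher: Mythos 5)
Your proof is correct and follows essentially the same route as the paper's: the layer-cake representation of $\E{\varphi_{p,r}\pr{X}}$, the tail bound from Lemma~\ref{lem:Lemma_weak_type_estimate}, Tonelli, a scaling estimate for $\varphi_{p,r}$ whose integral over $s\in[1,+\infty)$ converges precisely because $p>1$, and Lemma~\ref{lem:norme_Orlicz_c_fois_fct_YOug} to absorb the resulting constant into the norm. The only cosmetic difference is that you gain the factor $s^{1-p}$ through the pointwise derivative bound $\varphi'_{p,r}\pr{u/s}\leq C_{p,r}s^{1-p}\varphi'_{p,r}\pr{u}$, whereas the paper uses the equivalent submultiplicativity $\varphi_{p,r}\pr{uv}\leq K\varphi_{p,r}\pr{u}\varphi_{p,r}\pr{v}$ together with the convergence of $\int_1^{+\infty}\varphi_{p,r}\pr{1/x}\,\mathrm dx$.
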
 
 
 \begin{proof}
  This follows from a rewritting of $\E{\varphi_{p,r}\pr{X}}$
  using tails of $X$, that is, 
  \begin{equation}
   \E{\varphi_{p,r}\pr{X}}=\int_0^{+\infty}\varphi'_{p,r}\pr{t}
   \PP\ens{X>t}\mathrm dt.
  \end{equation}
  Since $\varphi'_{p,r}\pr{t}\leq ct^{p-1}\pr{1+\log\pr{1+t}}^r
  \leq C\varphi'_{p,r}\pr{t}$, we 
  derive by Lemma~\ref{lem:Lemma_weak_type_estimate} that 
  \begin{multline}
   \E{\varphi_{p,r}\pr{X}}\leq c
   \int_0^{+\infty} t^{p-1}\pr{1+\log\pr{1+t}}^r
\int_1^{+\infty}\PP\ens{Y>xt/2}\mathrm dx\mathrm dt\\
\leq cC\int_1^{+\infty}\E{\varphi_{p,r}\pr{2Y/x}
\mathbf 1\ens{2Y>x}}\mathrm dx.
  \end{multline}
  Since $\varphi_{p,r}\pr{uv}\leq K\varphi_{p,r}\pr{u}
  \varphi_{p,r}\pr{v}$ and the integral $\int_1^{+\infty}
  \varphi_{p,r}\pr{1/x}\mathrm{d}x$ converges, we proved that 
  $\E{\varphi_{p,r}\pr{X}}\leq c_{p,r}\E{\varphi_{p,r}\pr{Y}}$ and 
  we conclude using Lemma~\ref{lem:norme_Orlicz_c_fois_fct_YOug}.
 \end{proof}

 \subsection{Reduction to dyadics}
Let $d$ be a fixed integer and for $0\leq i\leq d-1$ define by  $\N_i$ the elements of 
$ \N^d$ whose coordinates $i+1,\dots,d$ are dyadic numbers. More 
formally, 
\begin{equation}
\N_i:=\ens{\gr{n}\in\N^d\mid  \mbox{ for all }
i+1\leq j\leq d, \exists k_j\in\N\cup\ens{0}\mbox{ such that }n_j=2^{k_j}   }.
\end{equation}
We also define $\N_d$ as $\N^d$. Notice that $\N_0$ is the set of all the elements of 
$\N^d$ such that all the coordinates are powers of $2$ of a non-negative integer.  The goal of this 
subsection is to show that it suffices to prove 
Theorem~\ref{thm:cas_fonctionnelle_iid}  where the supremum over $\N^d$ is replaced by 
the corresponding one over $\N_0$. 

\begin{Proposition}\label{prop:reduction_dyadiques_martingales_lex}
Let $\pr{X_{\gri}}_{\gri\in\Z^d}$ be  an i.i.d. centered random field. Then for all $1<p<2$, the following inequality holds 
\begin{equation}
\norm{\sup_{\gr{n}\in\N^d} 
\frac 1{\sqrt{\abs{\gr{n}}L L\pr{  \abs{\gr{n}     }}   }     }
\abs{\sum_{\gr{1}\imd \gr{i}\imd \gr{n}    }  X_{\gr{i}}   }
        }_{p}\leq c_{p,d}\norm{\sup_{\gr{n}\in\N_0} 
\frac 1{\sqrt{\abs{\gr{n}}L L\pr{  \abs{\gr{n}}     }   }     }
\abs{\sum_{\gr{1}\imd \gr{i}\imd \gr{n}    }  X_{\gr{i}}   }
        }_{p},
\end{equation}
where $c_{p,d}$ depends only on $d$.
\end{Proposition}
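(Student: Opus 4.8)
The plan is to pass from $\N^d=\N_d$ to $\N_0$ along the chain $\N_0\subset\N_1\subset\cdots\subset\N_d$, relaxing one dyadic constraint at a time. Writing $M_i:=\sup_{\grn\in\N_i}\pr{\abs{\grn}LL\pr{\abs{\grn}}}^{-1/2}\abs{S_{\grn}}$, it suffices to prove, for each $1\le i\le d$, an inequality $\norm{M_i}_p\le c\,\norm{M_{i-1}}_p$ with $c$ depending only on $d$; composing these $d$ estimates then yields the proposition with $c_{p,d}$ depending on $d$ alone. Since $\N_i$ and $\N_{i-1}$ differ only in that the $i$-th coordinate ranges over all of $\N$ in the former and over powers of $2$ in the latter, each step is a genuinely one–dimensional dyadic reduction carried out uniformly over the remaining coordinates. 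Throughout I may assume $X_{\gr{0}}\in\el^2$, which is the case relevant to the applications.

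For the $i$-th step I would freeze the coordinates of $\grn$ other than the $i$-th—call this vector $\grm$—together with a dyadic scale $k\ge 0$, and restrict the free coordinate to the block $n_i\in[2^k,2^{k+1})$. The elementary but decisive observation is that the normalization is essentially constant on such a block: if $\grn'$ denotes $\grn$ with its $i$-th coordinate replaced by $2^k$, then $\grn'\in\N_{i-1}$ and $\abs{\grn}\asymp\abs{\grn'}$, whence $LL\pr{\abs{\grn}}\asymp LL\pr{\abs{\grn'}}$ as well. Consequently $M_i$ is bounded, up to a constant depending only on $d$, by the supremum over $k$ and $\grm$ of $\pr{\abs{\grn'}LL\pr{\abs{\grn'}}}^{-1/2}\max_{2^k\le n_i<2^{k+1}}\abs{S_{\pr{\grm,n_i}}}$, so the whole task reduces to replacing this block–maximum by the value of the partial sum at a dyadic endpoint, which lives in $\N_{i-1}$.

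To carry out the replacement I would use that, for fixed $\grm$, the process $n_i\mapsto S_{\pr{\grm,n_i}}$ is a sum of independent centered increments—the increment at level $n_i$ being $\sum_{\gr{1}\imd\grj\imd\grm}X_{\pr{\grj,n_i}}$ over the remaining coordinates—hence a martingale to which a maximal inequality of Ottaviani/Lévy type applies. Estimating the size of a block increment by $\mathrm{Var}\pr{S_{\pr{\grm,2^{k+1}}}-S_{\pr{\grm,2^k}}}=\abs{\grn'}\,\sigma^2$ with $\sigma^2=\E{X_{\gr{0}}^2}$, and taking the threshold of order $\lambda\pr{\abs{\grn'}LL\pr{\abs{\grn'}}}^{1/2}$, which dominates $\sqrt{\abs{\grn'}}\,\sigma$ once $\lambda$ exceeds a constant (here $LL\ge 1$ is used), the anticoncentration hypothesis of Ottaviani's inequality is met and the tail of the block–maximum is controlled by that of $\abs{S_{\pr{\grm,2^{k+1}}}-S_{\pr{\grm,2^k}}}\le\abs{S_{\pr{\grm,2^{k+1}}}}+\abs{S_{\pr{\grm,2^k}}}$, that is, by dyadic values feeding into $M_{i-1}$. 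This coupling is the conceptual heart of the matter: the block fluctuation cannot be bounded in isolation, since with only a second moment a single large summand can push $\max_{2^k\le n_i<2^{k+1}}\abs{S_{\pr{\grm,n_i}}-S_{\pr{\grm,2^k}}}$ to the critical size $\sqrt{2^k\,LL\pr{2^k}}$ infinitely often; but that same summand forces a dyadic partial sum to be comparably large, and the maximal inequality is precisely what makes this quantitative.

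The step I expect to be the main obstacle is turning these per–scale, per–$\grm$ tail bounds into the single estimate $\norm{M_i}_p\le c\,\norm{M_{i-1}}_p$ without a union bound over the infinitely many blocks, which would destroy the constant. Rather than summing tails over $k$, I would phrase the comparison as a weak–type inequality between the distribution functions of $M_i$ and $M_{i-1}$: dominating $\max_{2^k\le n_i<2^{k+1}}\abs{S_{\pr{\grm,n_i}}}$ by the running maximum up to $2^{k+1}$, which is monotone in $k$, and pairing it with the monotone normalization $\pr{\abs{\grn'}LL\pr{\abs{\grn'}}}^{-1/2}$ lets one organize the whole estimate as a single weighted martingale maximal inequality. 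After that, the weak-type transfer of Lemma~\ref{lem:Lemma_weak_type_estimate} together with the equivalence of norms in Lemma~\ref{lem:lp_faibles} returns the bound in $\el^p$. Since each of the $d$ steps costs only the Ottaviani constant and the block–comparability constant, the final constant depends on $d$ alone, as claimed.
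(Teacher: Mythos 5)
Your reduction scheme (one coordinate at a time along $\N_0\subset\N_1\subset\dots\subset\N_d$, comparability of the normalization within dyadic blocks, a weak-type comparison converted to $\el^p$ via Lemmas~\ref{lem:Lemma_weak_type_estimate} and~\ref{lem:lp_faibles}) is exactly the paper's architecture, but your proof has a genuine gap at precisely the point you yourself flag as ``the main obstacle'': the passage from per-block, per-$\grm$ Ottaviani bounds to a single weak-type comparison between $M_i$ and $M_{i-1}$. You assert that monotonicity of the running maximum in $k$ and of the normalization ``lets one organize the whole estimate as a single weighted martingale maximal inequality,'' but no such off-the-shelf inequality exists for this structure (a supremum over all frozen coordinates sitting inside each level of the free coordinate, and weights $\sqrt{\abs{\grn}LL\pr{\abs{\grn}}}$ varying over infinitely many blocks); proving it is the entire content of the paper's Lemma~\ref{lem_Mi_Mi-1}. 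The paper establishes $x\PP\ens{M_i>x}\leq c\,\E{M_{i-1}\mathbf 1\ens{M_i>x}}$ by a first-entrance decomposition $\ens{M_i>x}=\bigcup_N B_N$ in the free coordinate, Abel summation within each dyadic block, and the conditional Jensen step $\E{Y'_{N+1}-Y'_N\mid \Gca_N}\geq 0$ (which uses only independence and centering of the field), and finally the pairwise disjointness of the $B_N$ to reassemble across blocks. Nothing in your sketch substitutes for this: as you note, a union bound over blocks destroys the constant, and monotonicity alone does not produce the cross-block bookkeeping that the first-entrance decomposition provides.

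There is also a secondary defect intrinsic to the Ottaviani route. Anticoncentration requires the threshold to dominate the standard deviation of the block increment, so even if you make the coupling rigorous (say, by a stopping time in the free coordinate plus conditional Chebyshev on the increment up to the next dyadic endpoint), the resulting tail comparison holds only for $x\geq c\,\sigma$ with $\sigma=\norm{X_{\gr{0}}}_2$. To absorb the region $x<c\,\sigma$ into $c_{p,d}\norm{M_{i-1}}_p$ you then need the reverse bound $\sigma\leq C_{p,d}\norm{M_{i-1}}_p$; since $\norm{X_{\gr{0}}}_p$ can be far smaller than $\sigma$ when $p<2$, this amounts to a lower bound for the dyadic LIL maximal function, essentially the hard half of the Hartman--Wintner law of the iterated logarithm along dyadics, which you never address. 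The paper's conditional-Jensen mechanism avoids both problems at once: its weak-type bound holds for every $x>0$, with a constant depending only on the normalization-doubling constant, and requires no second moment, which is why the proposition comes out with $c_{p,d}$ depending only on $p$ and $d$ and with no integrability caveat.
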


\begin{Lemma}\label{lem_Mi_Mi-1}
Let $\pr{a_{\gr{n}}}_{\gr{n}\in \N^d}$ be a family of positive numbers such that 
$a_{\gr{n}}\leq a_{\gr{n'}}$ if $\gr{n}\imd \gr{n'}$ and 
\begin{equation}\label{eq:definition_de_c}
c:=\sup_{\gr{n}\in\N^d}\max_{1\leq i\leq d}\frac{a_{\gr{n}+n_i \gr{e_i}   }}{a_{\gr{n}    }}<+\infty. 
\end{equation}
 Assume that $\pr{X_{\gr{i}}}_{\gr{i}\in\Z^d}$ 
is independent and centered.
Then for any real number number $x$ and any $i\in\ens{1,\dots,d}$, 
\begin{equation}\label{eq:lem_estimate_Mi}
\PP\ens{M_i>x}\leq \int_1^{+\infty}\PP\ens{M_{i-1}> \frac{ux}{2c}     }\mathrm du,
\end{equation}
 where 
 \begin{equation}
 M_i=\sup_{\gr{n}\in\N_i} 
\frac 1{\sqrt{\abs{\gr{n}}L L\pr{  \abs{\gr{n}}     }   }     }
\abs{\sum_{\gr{1}\imd \gr{i}\imd \gr{n}    }  X_{\gr{i}}   }.
 \end{equation}

\end{Lemma}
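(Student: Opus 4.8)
The plan is to derive \eqref{eq:lem_estimate_Mi} from Lemma~\ref{lem:Lemma_weak_type_estimate} applied to $X=M_i$ and $Y=cM_{i-1}$. Concretely, the whole content is the weak-type inequality
\begin{equation*}
x\PP\ens{M_i>x}\leq c\,\E{M_{i-1}\mathbf 1\ens{M_i\geq x}},\qquad x>0 ,
\end{equation*}
for once this is known, Lemma~\ref{lem:Lemma_weak_type_estimate} with $t=x/2$ yields $\PP\ens{M_i>x}\leq\int_1^{+\infty}\PP\ens{cM_{i-1}>ux/2}\mathrm du=\int_1^{+\infty}\PP\ens{M_{i-1}>ux/(2c)}\mathrm du$, which is precisely \eqref{eq:lem_estimate_Mi}.

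To establish the weak-type inequality I would work with the filtration $\pr{\Fca_k}_{k\geq 0}$ associated to the $i$-th direction, $\Fca_k:=\sigma\ens{X_{\gr{j}}\mid j_i\leq k}$, where $j_i$ is the $i$-th coordinate of $\gr{j}$. Two structural facts drive the argument. First, if $\gr{n}\in\N_i$ has $i$-th coordinate $n_i=k$ and $\gr{m}$ denotes $\gr{n}$ with its $i$-th coordinate raised to the smallest power of $2$ that is $\geq k$, then $\gr{m}\in\N_{i-1}$, $m_i\geq k$, and $S_{\gr{n}}=\E{S_{\gr{m}}\mid\Fca_k}$; this is because $S_{\gr{m}}-S_{\gr{n}}$ is the sum of the $X_{\gr{j}}$ over the slab $\ens{k<j_i\leq m_i}$, which is independent of $\Fca_k$ and centered. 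Second, writing $2^{l-1}<n_i\leq 2^l=m_i$, monotonicity gives $a_{\gr{n}}\geq a_{\gr{n}_-}$ for $\gr{n}_-$ equal to $\gr{n}$ with $i$-th coordinate $2^{l-1}$, and the doubling bound \eqref{eq:definition_de_c} then gives $a_{\gr{m}}\leq c\,a_{\gr{n}_-}\leq c\,a_{\gr{n}}$.

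Now fix $x>0$ and set
\begin{equation*}
\tau:=\inf\ens{k\geq 1\mid \exists\,\gr{n}\in\N_i,\ n_i=k,\ \abs{S_{\gr{n}}}>x\,a_{\gr{n}}} .
\end{equation*}
Each $\ens{\tau=k}$ is built from $\Fca_k$-measurable events, so $\tau$ is a stopping time, and $\ens{\tau<+\infty}=\ens{M_i>x}$. On $\ens{\tau<+\infty}$ I would select, using the countability of $\N_i$, a witness $\gr{n}^*$ with $n^*_i=\tau$ and $\abs{S_{\gr{n}^*}}>x\,a_{\gr{n}^*}$, and let $\gr{m}$ be its dyadic lift as above, so that $\gr{m}\in\N_{i-1}$ is $\Fca_\tau$-measurable. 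Combining the two facts with conditional Jensen's inequality, on $\ens{\tau<+\infty}$,
\begin{equation*}
x<\frac{\abs{S_{\gr{n}^*}}}{a_{\gr{n}^*}}
\leq c\,\frac{\abs{\E{S_{\gr{m}}\mid\Fca_\tau}}}{a_{\gr{m}}}
\leq c\,\E{\frac{\abs{S_{\gr{m}}}}{a_{\gr{m}}}\,\Big|\,\Fca_\tau}
\leq c\,\E{M_{i-1}\mid\Fca_\tau},
\end{equation*}
the last inequality because $\abs{S_{\gr{m}}}/a_{\gr{m}}\leq M_{i-1}$ and $a_{\gr{m}}$ is $\Fca_\tau$-measurable. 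Since $\ens{M_i>x}=\ens{\tau<+\infty}\in\Fca_\tau$, multiplying by $\mathbf 1\ens{M_i>x}$ and taking expectations gives $x\PP\ens{M_i>x}\leq c\,\E{M_{i-1}\mathbf 1\ens{M_i>x}}\leq c\,\E{M_{i-1}\mathbf 1\ens{M_i\geq x}}$, the desired weak-type inequality.

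The one delicate point — and the step I expect to require genuine care — is the optional-stopping identity $S_{\gr{n}^*}=\E{S_{\gr{m}}\mid\Fca_\tau}$ at the random time $\tau$ with the random, $\Fca_\tau$-measurable index $\gr{m}$. I would justify it by decomposing over the countably many values of $\pr{\tau,\gr{n}^*}$ and applying the fixed-index identity of the previous paragraph on each set $\ens{\tau=k,\gr{n}^*=\gr{n}_0}\in\Fca_k$, which also forces the witness $\gr{n}^*$ to be chosen in an $\Fca_\tau$-measurable way. The remaining ingredients (the two structural facts and the passage through Lemma~\ref{lem:Lemma_weak_type_estimate}) are routine.
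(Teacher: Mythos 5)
Your proposal is correct. It shares the paper's skeleton: both arguments reduce \eqref{eq:lem_estimate_Mi} to the weak-type bound $x\PP\ens{M_i>x}\leq c\,\E{M_{i-1}\mathbf 1\ens{M_i>x}}$ (the paper performs the integral-splitting conversion inline rather than citing Lemma~\ref{lem:Lemma_weak_type_estimate}, but it is the same computation), and both prove this bound from the slab filtration in direction $i$, the vanishing of conditional expectations of future increments (independence plus centering), and the dyadic lifting of the $i$-th coordinate at the cost of the constant $c$. The implementations differ genuinely, though. The paper never selects a witness rectangle: it introduces the supremum variables $Y_N$ and their rescaled versions $Y'_N$, decomposes $\ens{M_i>x}$ into the first-entrance sets $B_N$ --- which are precisely your sets $\ens{\tau=N}$ --- and, within each dyadic block $2^n<N\leq 2^{n+1}$, performs a summation by parts combined with the submartingale-type property $\E{Y'_{N+1}-Y'_N\mid\Gca_N}\geq 0$ to replace $Y'_N$ on $B_N$ by the block endpoint $Y'_{2^{n+1}}$, which is dominated by $M_{i-1}$. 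You instead stop at $\tau$, choose a measurable witness $\gr{n}^*$, and apply the conditional Jensen inequality to that single random rectangle. Your route dispenses with the Abel summation and with the exchange of supremum and conditional expectation hidden in the paper's submartingale property; the price is the optional-stopping and measurable-selection issue, which you correctly flag as the delicate point and resolve by decomposing over the countably many values of $\pr{\tau,\gr{n}^*}$ --- on each piece $A=\ens{\tau=k,\gr{n}^*=\gr{n}_0}\in\Fca_k$ the fixed-index identity applies, and summing the resulting bounds $x\PP\pr{A}\leq c\,\E{M_{i-1}\mathbf 1_A}$ over the disjoint pieces yields the weak-type inequality without ever needing $\Fca_\tau$ formally. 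Both proofs use the hypotheses in exactly the same way: your two structural facts (the identity $S_{\gr{n}}=\E{S_{\gr{m}}\mid\Fca_k}$ and the estimate $a_{\gr{m}}\leq c\,a_{\gr{n}}$) are precisely the mechanisms underlying the paper's estimates, so the two arguments may be regarded as two bookkeeping schemes for the same idea, with yours arguably the more transparent of the two.
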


\begin{proof}
Let $0\leq i\leq d-1$. Define the random variables 
\begin{equation}
Y_N:= \frac1{a_{n_1,\dots,n_{i-1},N,n_{i+1},\dots,n_d}} \sup_{n_1,\dots,n_{i-1}}\sup_{n_{i+1},\dots,n_d}
\abs{S_{n_1,\dots,n_{i-1},N,n_{i+1},\dots,n_d}},
\end{equation}
\begin{equation}
Y'_N:=\frac{a_{n_1,\dots,n_{i-1},N,n_{i+1},\dots,n_d}}{a_{n_1,\dots,n_{i-1},2^{n+1},n_{i+1},\dots,n_d}}  Y_N,\quad  2^n+1\leq N\leq 2^{n+1}.
\end{equation}
and the following events 
\begin{equation}
A_N:=\ens{Y_N>x},  B_0=\emptyset,  B_N:=A_N\setminus \bigcup_{i=0}^{N-1}A_i,
\end{equation}
\begin{equation}\label{eq:definition_de_C_Nn}
C_{N,n}:=\begin{cases}
\bigcup_{i=2^n+1}^NB_i, &\mbox{ if }2^n+1\leq N\leq 2^{n+1};\\
\emptyset, &\mbox{ if } N\leq 2^n  \mbox{ or }  N>  2^{n+1}.
\end{cases}
\end{equation}
In this way, the set $\ens{M_i>x}$ can be expressed as the disjoint union 
$\bigcup_{N\geq 1}B_N$ hence 
\begin{equation}
\PP\ens{M_i>x}\leq \sum_{N\geq 1}\PP\pr{B_N}=
\sum_{n=0}^{+\infty}\sum_{N=2^n+1}^{2^{n+1}}\PP\pr{B_N}.
\end{equation}
Since $x\mathbf{1}\pr{B_N}\leq Y_N \mathbf{1}\pr{B_N}$, we infer that 
\begin{equation}
x\PP\ens{M_i>x}\leq\sum_{n=0}^{+\infty}\sum_{N=2^n+1}^{2^{n+1}}\E{ 
Y_N\mathbf 1\pr{B_N}
}.
\end{equation}
By definition of $c$ in \eqref{eq:definition_de_c}, we get that 
\begin{equation}\label{eq:estimate_tail_of_Mi}
x\PP\ens{M_i>x}\leq c\sum_{n=0}^{+\infty}\sum_{N=2^n+1}^{2^{n+1}}\E{ 
Y'_N\mathbf 1\pr{B_N}
}.
\end{equation}
Let $n\geq 0$ be fixed. Since $\mathbf 1\pr{B_N}=\mathbf 1\pr{C_{N,n}}-
\mathbf 1\pr{C_{N-1,n}}$ for all $n$ such that $2^n+1\leq N\leq 2^{n+1}$, 
\begin{align*}
\sum_{N=2^n+1}^{2^{n+1}}\E{ 
Y'_N\mathbf 1\pr{B_N}
}&=\sum_{N=2^n+1}^{2^{n+1}}\E{ 
Y'_N\pr{\mathbf 1\pr{C_{N,n}}-
\mathbf 1\pr{C_{N-1,n}}}
}\\
&=\E{\sum_{N=2^n+1}^{2^{n+1}} 
Y'_N \mathbf 1\pr{C_{N,n}}-
\sum_{N=2^n}^{2^{n+1}-1}Y'_{N+1} \mathbf 1\pr{C_{N,n}}}
\\
&\leq \E{Y'_{2^{n+1}} \mathbf 1\pr{C_{2^{n+1},n}} }+
\E{\sum_{N=2^n+1}^{2^{n+1}-1} 
\pr{ Y'_N  -Y'_{N+1} }     \mathbf 1\pr{C_{N,n}}}.
\end{align*}

The set $\mathbf 1\pr{C_{N,n}}$ is measurable with respect to the 
$\sigma$-algebra $\mathcal G_N$ generated by the random variables $\eps_{\gru}$, $\gru\in \Z^d$, $u_i\leq N$,
 and by independence of $\pr{X_{\gr{i}}}_{\gr{i}\in\Z^d}$ the random variable
 $\E{Y'_{N+1}-Y'_N\mid \Gca_N }$ is non-negative and consequently, 
 \begin{align}
 \E{\pr{ Y'_N  -Y'_{N+1} }     \mathbf 1\pr{C_{N,n}}}
 &=\E{\E{\pr{ Y'_N  -Y'_{N+1} }     \mathbf 1\pr{C_{N,n}} \mid \Gca_N}}\\
 &=\E{\mathbf 1\pr{C_{N,n}} \E{Y'_N  -  Y'_{N+1}    \mid \Gca_N }}\leq 0,
 \end{align}
 from which it follows that 
 \begin{equation}
 \sum_{N=2^n+1}^{2^{n+1}}\E{ 
Y'_N\mathbf 1\pr{B_N}
}\leq \E{Y'_{2^{n+1}} \mathbf 1\pr{C_{N,n}} }.
 \end{equation}
 The latter inequality combined with \eqref{eq:estimate_tail_of_Mi} allows to deduce that 
 \begin{equation}\label{eq:estimate_tail_of_Mi_bis}
x\PP\ens{M_i>x}\leq c\sum_{n=0}^{+\infty}\E{Y'_{2^{n+1}} \mathbf 1\pr{C_{2^{n+1},n}} }.
\end{equation}
Observe that for all $n\geq 0$, the random  variable $Y'_{2^{n+1}}$ is bounded by $M_{i-1}$. Combining this 
with the definition of $C_{N,n}$ given by \eqref{eq:definition_de_C_Nn}, we derive that 
\begin{equation}\label{eq:estimate_tail_of_Mi_ter}
x\PP\ens{M_i>x}\leq c\sum_{n=0}^{+\infty}\E{M_{i-1} \mathbf 1\pr{\bigcup_{k=2^n+1}^{2^{n+1}}B_k  } }.
\end{equation}
Since the family $\ens{B_k,k\geq 1}$ is pairwise disjoint, so is the family 
$\ens{\bigcup_{k=2^n+1}^{2^{n+1}}B_k ,n\geq 0}$. Therefore, using again the fact that 
$\ens{M_i>x}$ can be expressed as the disjoint union 
$\bigcup_{N\geq 1}B_N$, we establish the inequality 
\begin{equation}
x\PP\ens{M_i>x}\leq c\E{M_{i-1}\mathbf 1\ens{M_i>x}   }.
\end{equation}
We estimate the right hand side of the previous inequality in the 
following way:
\begin{align*}
 \E{M_{i-1}\mathbf 1\ens{M_i>x}   }&=\int_0^{+\infty} 
 \PP\pr{\ens{M_i>x}\cap \ens{M_{i-1}>t}}\mathrm dt\\
 &\leq \int_0^{x/\pr{2c}} 
 \PP\ens{M_i>x}\ \mathrm dt+\int_{x/\pr{2c}}^{+\infty} 
 \PP \ens{M_{i-1}>t}\mathrm dt\\
 &=\frac{x}{2c}\PP\ens{M_i>x}+\frac{x}{2c}
 \int_{1}^{+\infty} 
 \PP \ens{M_{i-1}>\frac{x}{2c}u}\mathrm du,
\end{align*}
from which \eqref{eq:lem_estimate_Mi} follows.
\end{proof}

\subsection{Proof of Theorem~\ref{thm:cas_fonctionnelle_iid} in the i.i.d. case}

In this Subsection, we will prove Theorem~\ref{thm:cas_fonctionnelle_iid} in the particular case
of an i.i.d. random field.

\begin{Theorem}\label{thm:iid}
Let $\pr{X_{\gr{i}}}_{\gr{i}\in \Z^d}$ be a centered i.i.d. random field. For all $1< p<2$, the following inequality holds:
\begin{equation}
\norm{ 
\sup_{\grn\in \N^d}\frac 1{ \sqrt{\abs{\grn} L L\pr{\abs{\grn}}   }    }
\abs{\sum_{\gr{1}\imd\gri \imd\grn  }X_{\gr{i}}   }
}_p\leq c_{p,d} \norm{X_{\gr{0}  }}_{2,d-1},
\end{equation}
where $c_{p,d}$ depends only on $p$ and $d$.
\end{Theorem}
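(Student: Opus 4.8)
The plan is to replace the supremum over $\N^d$ by a supremum over dyadic indices, to control the tail of the resulting maximal function by a union bound over dyadic scales combined with a truncation argument, and finally to pass from tails to the $\el^p$-norm through the weak-$\el^p$ and Orlicz machinery of Section~3.

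First I would apply Proposition~\ref{prop:reduction_dyadiques_martingales_lex}: since $\pr{X_{\gri}}_{\gri\in\Z^d}$ is i.i.d. and centered, it suffices to bound $\norm{M_0}_p$, where
\[
M_0:=\sup_{\grn\in\N_0}\frac 1{\sqrt{\abs{\grn}LL\pr{\abs{\grn}}}}\abs{\sum_{\gr{1}\imd\gri\imd\grn}X_{\gri}}
\]
and $\N_0$ is the set of those $\grn$ all of whose coordinates are powers of $2$. Writing such an index as $\grn=\pr{2^{k_1},\dots,2^{k_d}}$ with $k_q\geq 0$ and $m:=k_1+\dots+k_d$, one has $\abs{\grn}=2^m$, and the number of indices of $\N_0$ with $\abs{\grn}=2^m$ is of order $m^{d-1}$. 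Since $LL\pr{2^m}=L\pr{m\ln 2}\asymp \log m$, a Gaussian tail of the form $\exp\pr{-c\lmb^2 LL\pr{2^m}}$ behaves like $m^{-c\lmb^2}$; this polynomial decay, weighted against the combinatorial factor of order $m^{d-1}$, is precisely what will match the logarithmic exponent $d-1$ in the norm $\norm{\cdot}_{2,d-1}$.

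Next, for a fixed scale $m$ I would truncate at height $T_m\asymp \sqrt{2^m/LL\pr{2^m}}$, writing $X_{\gri}=X'_{\gri}+X''_{\gri}$ with $X'_{\gri}$ the centered truncation $X_{\gri}\mathbf 1\ens{\abs{X_{\gri}}\leq T_m}-\E{X_{\gri}\mathbf 1\ens{\abs{X_{\gri}}\leq T_m}}$. For a fixed dyadic $\grn$ with $\abs{\grn}=2^m$, the sum $\sum_{\gr{1}\imd\gri\imd\grn}X'_{\gri}$ is a sum of $2^m$ independent, bounded, centered variables; the choice of $T_m$ is calibrated so that the threshold $x\asymp\lmb\sqrt{2^m LL\pr{2^m}}$ falls in the Gaussian regime of the deviation inequalities of Proposition~\ref{prop:inegalite_deviation_martingales} and Proposition~\ref{prop:Freedman}, yielding
\[
\PP\ens{\abs{\sum_{\gr{1}\imd\gri\imd\grn}X'_{\gri}}>\tfrac{\lmb}{2}\sqrt{2^m LL\pr{2^m}}}\leq 2\exp\pr{-c\lmb^2 LL\pr{2^m}}.
\]
Summing this over the indices of $\N_0$ at scale $m$ and then over $m$ gives, for $\lmb$ large, a rapidly decaying bound on the tail of the bounded part. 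The remainder $X''_{\gri}$ carries the large values; its contribution I would control through moment estimates, using Lemma~\ref{lem:lem_moment_tronques_esp_cond} and the stationary maximal inequality of Proposition~\ref{prop:extension_thm_ergodique_max}, whose $\pr{\log u}^{d-1}$ kernel again produces the exponent $d-1$. The outcome is a weak-type control of $M_0$ by $\abs{X_{\gr{0}}}$ that, via Lemma~\ref{lem:from_weak_type_to_Orlicz} and Lemma~\ref{lem:lp_faibles}, upgrades to $\norm{M_0}_p\leq c_{p,d}\norm{X_{\gr{0}}}_{2,d-1}$; the range $1<p<2$ is exactly what keeps the $\lmb$-integrals defining the $\el^p$-norm convergent and the constant $c_{p,d}$ finite.

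I expect the main obstacle to be the organization of the large-value part: the truncation level $T_m$ depends on the scale $2^m$, so the ``unbounded'' pieces cannot be treated by a single deviation inequality, and one must resum the scale-dependent contributions so that the combinatorial factor of order $m^{d-1}$ produces exactly $d-1$ (and not $d$) logarithmic powers while preserving the linear homogeneity in $\norm{X_{\gr{0}}}_{2,d-1}$. Decomposing $X_{\gr{0}}$ into dyadic magnitude layers, bounding the maximal function of each bounded layer separately by the exponential inequalities above, and then resumming, is the device I would use to reconcile the scale-dependent truncation with the Orlicz norm.
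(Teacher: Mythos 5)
Your opening and closing steps (the dyadic reduction via Proposition~\ref{prop:reduction_dyadiques_martingales_lex}, and the passage from tail bounds to the $\el^p$-norm via Lemmas~\ref{lem:lp_faibles} and~\ref{lem:from_weak_type_to_Orlicz}) coincide with the paper's, but your core mechanism --- scale-dependent truncation plus Freedman --- is a different route. The paper avoids truncation entirely: for each dyadic $\grn$ it introduces the event $B_{\grn}=\ens{2^{-s\pr{\grn}}\sum_{\gri}X_{\gri}^2>x^p}$, bounds $\PP\pr{A_{\grn}\cap B_{\grn}^c}$ by the self-normalized inequality of Proposition~\ref{prop:inegalite_deviation_martingales}, and bounds $\PP\pr{\bigcup_{\grn}B_{\grn}}$ by the maximal ergodic theorem (Proposition~\ref{prop:extension_thm_ergodique_max}) applied to the stationary field $\pr{X_{\gri}^2}_{\gri\in\Z^d}$. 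Note in particular that the exponent $d-1$ in $\norm{X_{\gr{0}}}_{2,d-1}$ comes from that last step, not from the count $\asymp m^{d-1}$ of dyadic indices per scale, which in the paper is absorbed into the polynomial decay produced by condition \eqref{eq:hypothese_sur_x}.

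Your route has a genuine calibration gap in the large-value part. With $T_m\asymp\sqrt{2^m/LL\pr{2^m}}$ independent of the threshold $\lambda$, Freedman is not in the Gaussian regime (one gets $xc/y\asymp\lambda$, hence a Poissonian bound $\exp\pr{-c\lambda\ln\lambda\, LL\pr{2^m}}$ rather than $\exp\pr{-c\lambda^2LL\pr{2^m}}$ --- harmless for the bounded part), but for $X''$ the only available tool is Markov with the first moment, which after resummation yields a tail bound of order $\frac{1}{\lambda}\E{\varphi_{2,d-1}\pr{\abs{X_{\gr{0}}}}}$, i.e.\ decay $\lambda^{-1}$ only; this cannot give $\sup_t t^p\PP\ens{M_0>t}<+\infty$ for any $p>1$. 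The repair --- and it is exactly the calibration used in the paper's proof of Theorem~\ref{thm:linear_process_arbitrary_subsets} --- is to take the truncation level \emph{proportional to} $\lambda$, say $T_m=a\lambda\sqrt{2^m/LL\pr{2^m}}$ with $a$ small depending on $p$: then the resummation $\sum_{m\leq k}m^{d-1}\sqrt{2^m/LL\pr{2^m}}\lesssim k^{d-1}\sqrt{2^k/LL\pr{2^k}}$, applied to $\abs{X_{\gr{0}}}/\pr{a\lambda}$, gains an extra factor $1/\lambda$ and gives $O\pr{\lambda^{-2}}$, while the bounded part still decays like $\lambda^{-c/a}$, which beats $\lambda^{-p}$ for $a$ small. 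Your proposed final device --- splitting $X_{\gr{0}}$ into magnitude layers, bounding each layer's maximal function by an exponential inequality, and resumming --- cannot substitute for this: a union bound over layers forces a splitting $\lambda=\sum_k c_k\lambda$ of the threshold, and for the layer of magnitude $\asymp\sqrt{2^k/LL\pr{2^k}}$ the exponential bounds are vacuous at all scales $2^m\ll 2^k$, so that layer contributes at least $\asymp k^{d-1}\sigma_k^2/\pr{c_k\lambda}$, where $\sigma_k^2$ is the layer's second moment; testing on a distribution concentrated on a single layer shows one would need $c_k\gtrsim 1$ for every $k$, contradicting $\sum_k c_k\leq 1$. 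The per-layer union bound therefore necessarily loses logarithmic factors beyond $d-1$; only the all-at-once Markov bound on the part above $a\lambda\sqrt{2^m/LL\pr{2^m}}$ (or the paper's self-normalization) preserves the norm $\norm{X_{\gr{0}}}_{2,d-1}$.
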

\begin{proof}
We follows the ideas of the proof of Theorem~2.3 in \cite{MR3322323}.
Let us fix $1<p<2$. In view of Proposition~\ref{prop:reduction_dyadiques_martingales_lex} 
and Lemma~\ref{lem:Lemma_weak_type_estimate}, 
it suffices to establish that if $\norm{X_{\gr{0}}}_{2,d-1}=1$, then 
\begin{equation}\label{eq:bonre_martingale_lex_reduite}
x^p\PP\ens{\sup_{\gr{n}\in\N_0} 
\frac 1{\sqrt{\abs{\gr{n}}L L\pr{  \abs{\gr{n}     }}   }     }
\abs{\sum_{\gr{1}\imd \gr{i}\imd \gr{n}   }  X_{\gr{i}}   }
    >x    }   \leq C_{p,d} ,
\end{equation}
It suffices to prove \eqref{eq:bonre_martingale_lex_reduite} for $x$ such that 
\begin{equation}\label{eq:hypothese_sur_x}
\frac{x^2}{2x^p+2}-d\geq\frac{pd}{2-p}.
\end{equation}

 Let us fix such an $x$.
We will use the notation $s\pr{\grn}= 
 \sum_{q=1}^dn_q$ for $\grn\in \N^d$.
 Define the events 
\begin{equation}
A_{\grn}:=\ens{\frac 1{\sqrt{2^{s\pr{\gr{n}}}L \pr{  s\pr{\gr{n}     }}   }     }
\abs{\sum_{\gr{1}\imd \gr{i}\imd 2^{\gr{n} }   }  X_{\gr{i}}   }
    >x    };
\end{equation}
 \begin{equation}
 B_{\grn}:=\ens{\frac 1{ 2^{s\pr{\gr{n}}}       }
\abs{\sum_{\gr{1}\imd \gr{i}\imd 2^{\gr{n} }  }  X_{\gr{i}}^2   }
    >x^p    }.
 \end{equation}
 
 It suffices to establish that 
 \begin{equation}\label{eq:reduction_demo_LIL_martingales_ordre_lexico}
 \sum_{\grn\smd \gr{0}}  \PP\pr{A_{\grn}\cap  B_{\grn}^c }+
 \PP\pr{\bigcup_{\grn\smd\gr{0}}B_{\gr{n}} }\leq C_{p,d}x^{-p}.
 \end{equation}
 Let us fix an integer $N$. Define the sets 
 $J:= \ens{\grn\smd \gr{0}\mid s\pr{\grn} \leq N  }$ and 
 $J':= \ens{\grn\smd \gr{0}\mid s\pr{\grn} > N  }$, where $s\pr{\grn}= 
 \sum_{q=1}^dn_q$. Using Chebyshev's inequality, 
 we get 
 \begin{equation}
  \sum_{\grn\in J}  \PP\pr{A_{\grn}\cap  B_{\grn}^c }\leq 
  x^{-2}\operatorname{Card}\pr{J}\leq  x^{-2}c_d\sum_{k=1}^Nk^{d-1}\leq 
  c_d  x^{-2}N^d.
 \end{equation}
 Now, we control for a fixed $\grn\in J'$ the quantity $\PP\pr{A_{\grn}\cap  B_{\grn}^c }$. 
Let $\pr{d_i}_{i\in \Z}$ be an i.i.d. sequence such that $d_1$ has the same distribution as 
$X_{\gr{1}}$. Then 
\begin{equation}
\PP\pr{A_{\grn}\cap  B_{\grn}^c }=\PP\pr{\ens{\frac 1{\sqrt{2^{s\pr{\gr{n}}}L \pr{  s\pr{\gr{n}     }}   }     }
\abs{\sum_{j=1}^{2^{s\pr{\gr{n}}}   }    d_j   }  
    >x    } 
\cap     \ens{\frac 1{ 2^{s\pr{\gr{n}}}       }
\sum_{j=1}^{2^{s\pr{\gr{n}}}   }    d_j^2    
    \leq x^p    } 
     }
\end{equation}
 and by using Proposition~\ref{prop:inegalite_deviation_martingales} in the following context:
 $n:=2^{s\pr{\grn}}$  and $y=x^p2^{s\pr{\gr{n}}}$,
 we obtain the bound 
 \begin{equation}
 \PP\pr{A_{\grn}\cap  B_{\grn}^c }\leq 2\exp\pr{- 2^{s\pr{\gr{n}}}L\pr{s\pr{\grn}  }\frac{  x^{2}}{2\pr{x^p2^{s\pr{\gr{n}}}
 + 2^{s\pr{\gr{n}}}\E{X_{\gr{1}}^2   }}  }   }.
 \end{equation}
 Due to the assumption \eqref{eq:hypothese_sur_x} and the fact that $\E{X_{\gr{1}}^2   }\leq 1$, we obtain the estimate
 \begin{equation}
 \PP\pr{A_{\grn}\cap  B_{\grn}^c }\leq 2\exp\pr{- L\pr{s\pr{\grn}  }\pr{\frac{pd}{2-p} +d}    }.
 \end{equation}
 As the number of elements $\grn$ of $\N^d$ such that $s\pr{\grn}=k$ is of order 
 $k^{d-1}$ and $x\geq a_{p,d}$, we get that 
 \begin{equation}
  \PP\pr{A_{\grn}\cap  B_{\grn}^c }\leq c_d\sum_{k\geq N+1}k^{d-1-\pr{\frac{pd}{2-p} +d}}
  \leq c_d   \sum_{k\geq N+1}k^{-1-\frac{pd}{2-p}}\leq 
c_{p,d}N^{-\frac{pd}{2-p}}.
 \end{equation}
Using Proposition~\ref{prop:extension_thm_ergodique_max}, we derive that 
\begin{equation}
 \PP\pr{\bigcup_{\grn\smd\gr{0}}B_{\gr{n}} }\leq c_dx^{-p}
 \E{\varphi_{2,d-1}\pr{\abs{X_{\gr{0}}  }}}\leq c_dx^{-p}.
\end{equation}
Choosing $N:=\lfloor x^{\frac{2-p}d}\rfloor$, the previous estimations 
give \eqref{eq:reduction_demo_LIL_martingales_ordre_lexico}.
This ends the proof of Theorem~\ref{thm:iid}.
\end{proof}

\subsection{Proof of Theorem~\ref{thm:cas_fonctionnelle_iid}}

For $\grn\in \N^d$, define $c_{\grn}:= \sqrt{\abs{\gr{n}}
L L\pr{\abs{\grn}    }}  $.
We associate to a random field $\pr{X_{\gri}}_{\gri\in\Z^d}$ a maximal function, namely 
\begin{equation}
M\pr{\pr{X_{\gri}}_{\gri\in\Z^d}    }:=
\sup_{\grn\in\N^d}\frac 1{c_{\grn}}\abs{\sum_{\gr{1}\imd\gri \imd \grn }  
X_{\gri}
}.
\end{equation}

By the martingale convergence theorem, the decomposition 
$X_{\gri}=X_{\gri,0}+\sum_{j=1}^{+\infty}X_{\gri,j}$ holds almost surely where 
$X_{\gri,j}=\E{X_{\gr{i}}\mid \sigma\ens{\eps_{\gr{u}},\norm{\gr{u}-\gri}_\infty\leq 
 j   }}-\E{X_{\gr{i}}\mid \sigma\ens{\eps_{\gr{u}},\norm{\gr{u}-\gri }_\infty\leq 
 j -1  }}$ hence 
\begin{equation}
M\pr{\pr{X_{\gri}}_{\gri\in\Z^d}    }\leq 
M\pr{\pr{X_{\gri,0}}_{\gri\in\Z^d}    }+\sum_{j=1}^{+\infty}
M\pr{\pr{X_{\gri,j}}_{\gri\in\Z^d}    }.
\end{equation}
Therefore, it suffices to control the maximal function associated to 
each random field $\pr{X_{\gri,j}}_{\gri\in\Z^d} $, which has the feature that for all fixed $j$,  
$X_{\gri,j}$ is a function of $\pr{\eps_{\gri-\gru}}_{\norm{\gru}_\infty\leq j   }$. For such a 
random field, the maximal function can be bounded by a finite sum of maximal functions associated 
to an i.i.d. random field.

Let $\kappa$ be a constant such that for all $\gr{N}\smd \gr{1}$, $c_{\gr{N}+\gr{1}}\leq \kappa c_{\gr{N}}$.
\begin{Lemma}\label{lem:etape_inter_chamP-m_dep}
For all positive integer $k_0$, the following inequality holds:
\begin{equation}\label{eq:etape_inter_chamP-m_dep}
M\pr{\pr{X_{\gri}}_{\gri\in\Z^d}    }\leq \kappa k_0^{-d/2}\sum_{\gr{1}
\imd \gr{b}\imd  k_0 \gr{1}}  \widetilde{M}\pr{\pr{X_{k_0\gra+\grb}}_{\gra\in\Z^d}    },
\end{equation}
where 
\begin{equation}
\widetilde{M}\pr{\pr{X_{\gri}}_{\gri\in\Z^d}    }:=
\sup_{\grn\in\N^d}\frac 1{c_{\grn}}\abs{\sum_{\gr{0}\imd\gri \imd \grn }  
X_{\gri}
}.
\end{equation}
\end{Lemma}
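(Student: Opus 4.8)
The plan is to split each rectangular partial sum according to residues modulo $k_0$ and to recognise each residue block as an initial partial sum (starting from $\gr0$) of a decimated field, to which $\widetilde M$ applies directly.

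First I would fix $\grn\smd\gr1$ and use that every $\gri$ with $\gr1\imd\gri\imd\grn$ factors uniquely as $\gri=k_0\gra+\grb$ with $\gr1\imd\grb\imd k_0\gr1$ and $\gra\smd\gr0$ (namely $a_q=\lfloor(i_q-1)/k_0\rfloor$ and $b_q=i_q-k_0a_q\in\ens{1,\dots,k_0}$). Grouping the terms by the residue $\grb$ gives
\[
\sum_{\gr1\imd\gri\imd\grn}X_\gri=\sum_{\gr1\imd\grb\imd k_0\gr1}\ \sum_{\gr0\imd\gra\imd\grm}X_{k_0\gra+\grb},
\]
where $\grm=\grm(\grn,\grb)$ has coordinates $\grm_q=\lfloor(n_q-b_q)/k_0\rfloor$ (a block being simply omitted when $n_q<b_q$). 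The essential observation is that, for each $\grb$, the inner sum is exactly one of the from-the-origin partial sums appearing in the supremum defining $\widetilde M\pr{\pr{X_{k_0\gra+\grb}}_{\gra\in\Z^d}}$, which is precisely why the auxiliary maximal function was introduced with lower summation bound $\gr0$.

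Then, by the triangle inequality followed by the definition of $\widetilde M$, I would bound each complete block by $\abs{\sum_{\gr0\imd\gra\imd\grm}X_{k_0\gra+\grb}}\leq c_\grm\widetilde M\pr{\pr{X_{k_0\gra+\grb}}_{\gra\in\Z^d}}$, so that $c_\grn^{-1}\abs{\sum_{\gr1\imd\gri\imd\grn}X_\gri}\leq\sum_\grb(c_\grm/c_\grn)\,\widetilde M(\cdots)$. Everything then reduces to the normalisation ratio: from $k_0\grm_q\leq n_q-b_q\leq n_q-1$ one gets $k_0^d\abs{\grm}\leq\abs{\grn}$, and since $LL$ is nondecreasing and $\abs{\grm}\leq\abs{\grn}$, this yields $c_\grm\leq k_0^{-d/2}c_\grn$. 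As there are $k_0^d$ residues $\grb$ and the resulting bound is independent of $\grn$, taking the supremum over $\grn\in\N^d$ would give the announced inequality.

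The delicate step — and the reason the constant $\kappa$ with $c_{\grN+\gr1}\leq\kappa c_\grN$ is introduced — is the treatment of the incomplete blocks at the boundary of the rectangle, that is, those residues $\grb$ for which some coordinate $\grm_q$ equals $0$. For such $\grb$ the corner $\grm$ is not in $\N^d$, the quantity $c_\grm$ degenerates to $0$, and the naive ratio bound breaks down; one must instead pass to the nearest admissible corner by shifting by $\gr1$ and absorb the discrepancy through the doubling-type comparison $c_{\grN+\gr1}\leq\kappa c_\grN$, keeping the estimate uniform in $\grn$ and $\grb$. I expect this boundary bookkeeping, rather than the clean residue decomposition, to be the real work.
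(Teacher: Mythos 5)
Your residue decomposition and your treatment of the complete blocks are sound and coincide with the paper's own argument: the paper likewise writes $\grn=k_0\gr{N}+\grb$, enlarges the inner maximum, and trades normalisations through $c_{k_0\gr{N}}\geq k_0^{d/2}c_{\gr{N}}$ and the constant $\kappa$; your ratio bound $c_{\grm}\leq k_0^{-d/2}c_{\grn}$ for blocks whose corner $\grm=\grm\pr{\grn,\grb}$ lies in $\N^d$ is correct. The genuine gap is exactly the step you flag and then postpone: the blocks with a zero coordinate in $\grm$. The repair you sketch --- pass to the corner $\grm+\gr{1}$ and absorb the discrepancy via $c_{\gr{N}+\gr{1}}\leq\kappa c_{\gr{N}}$ --- cannot work, because changing the corner changes the sum: there is no monotonicity making $\abs{\sum_{\gr{0}\imd\gra\imd\grm}X_{k_0\gra+\grb}}$ comparable to $\abs{\sum_{\gr{0}\imd\gra\imd\grm+\gr{1}}X_{k_0\gra+\grb}}$, and a degenerate block such as the single term $X_{\gr{1}}$ (which is the whole of the sum for $\grn=\gr{1}$) simply never occurs among the sums defining the $\widetilde M$'s, whatever normalisation one attaches to it.

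In fact no bookkeeping can close this gap, because \eqref{eq:etape_inter_chamP-m_dep} is false as a pointwise inequality. Take $d=1$, $k_0=2$ and $X_i=\eps_i-\eps_{i-2}$, where $\pr{\eps_i}_{i\in\Z}$ are i.i.d. symmetric signs; this is a centered, strictly stationary functional of an i.i.d. sequence, so it is covered by the standing assumptions. The block sums telescope, $\sum_{a=0}^{n}X_{2a+b}=\eps_{2n+b}-\eps_{b-2}$, so on the positive-probability event $\ens{\eps_{-1}=-1,\ \eps_{1}=1}\cap\bigcap_{n=1}^{K}\ens{\eps_{2n+1}=-1}\cap\bigcap_{n=1}^{K}\ens{\eps_{2n+2}=\eps_{0}}$ the left-hand side of \eqref{eq:etape_inter_chamP-m_dep} is at least $\abs{X_1}/c_{1}=2$ (note $c_{1}=1$), while both maximal functions on the right are at most $2/c_{K+1}$, so the right-hand side is at most $2\sqrt{2}\kappa/c_{K+1}$, which is smaller than $2$ once $K$ is large. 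The same mechanism defeats the last display of the paper's own proof (already $\abs{X_1}\leq\abs{X_1+X_3}+\abs{X_2+X_4}$ fails when $X_3=-X_1$ and $X_2=X_4=0$), so your instinct that the boundary bookkeeping is ``the real work'' is exactly right; but neither your sketch nor the paper supplies it, and for the statement as written it cannot be supplied. A correct repair must modify the statement rather than the constants: for instance, redefine $\widetilde M$ as the supremum over all corners $\grm\smd\gr{0}$ with normalisation $c_{\grm+\gr{1}}$, so that incomplete blocks are admissible; the total weight then remains of order $k_0^{d/2}$ because in a direction with $n_q<k_0$ only the $n_q$ residues $b_q\leq n_q$ produce nonempty blocks, and for strictly stationary fields this enlarged maximal function still satisfies $\norm{\widetilde M}_p\leq\kappa\norm{M}_p$, which is all that the proof of Theorem~\ref{thm:cas_fonctionnelle_iid} actually uses.
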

Observe that the only difference between $M$ and $\widetilde{M}$ lies in the index of summation. 
If $\pr{X_{\gri}}_{\gri\in\Z^d}$ is strictly stationary, then 
\begin{equation}\label{eq:lien_M_tilde_M}
\norm{\widetilde{M}\pr{\pr{X_{\gri}}_{\gri\in\Z^d}    }}_p=\norm{
\sup_{\grn\in\N^d}\frac 1{c_{\grn}}\abs{\sum_{\gr{1}\imd\gri \imd \grn +\gr{1} }  
X_{\gri}
}}_p\leq \kappa \norm{M\pr{\pr{X_{\gri}}_{\gri\in\Z^d}    }}_p.
\end{equation}
\begin{proof}
Let 
\begin{equation}
M'\pr{\pr{X_{\gri}}_{\gri\in\Z^d}    }:= \sup_{\gr{N}\smd\gr{1}}\frac 1{c_{k_0\gr{N}}}\abs{\sum_{\gr{1}\imd\gri \imd k_0\gr{N} }  
X_{\gri}
}
\end{equation}
If $M$ was replaced by $M'$ in the left hand side of \eqref{eq:etape_inter_chamP-m_dep}, then Lemma~\ref{lem:etape_inter_chamP-m_dep} 
would follow from a decomposition of $\sum_{\gr{1}\imd\gri \imd \grn }  
X_{\gri}$ according to the remainder of the Euclidean division by $k_0$ of each $i_q$. We will 
link $M$ and $M'$. We start from the estimate
\begin{equation}
M\pr{\pr{X_{\gri}}_{\gri\in\Z^d}    }\leq 
\sup_{\gr{N}\smd\gr{0}}\max_{\gr{1}\imd \gr{b}\imd k_0\gr{1}} 
\frac 1{c_{k_0\gr{N}+\grb}}\max_{\gr{1}\imd \grn \imd k_0\gr{N}+\grb}
\abs{\sum_{\gr{1}\imd\gri \imd \grn }  
X_{\gri}}.
\end{equation}
Using the fact that $c_{k_0\gr{N}+\grb}\geq c_{k_0\gr{N} }\geq k_0^{d/2}c_{\gr{N} }$
and the inequality 
\begin{equation}
\max_{\gr{1}\imd \gr{b}\imd k_0\gr{1}} \max_{\gr{1}\imd \grn \imd k_0\gr{N}+\grb}\abs{\sum_{\gr{1}\imd\gri \imd \grn }  
X_{\gri}}
\leq \max_{\gr{1}\imd \grn \imd k_0\pr{\gr{N}+\gr{1} }}\abs{\sum_{\gr{1}\imd\gri \imd \grn }  
X_{\gri}},
\end{equation}
we infer that 
\begin{equation}
M\pr{\pr{X_{\gri}}_{\gri\in\Z^d}    }\leq k_0^{-d/2}\sup_{\gr{N}\smd\gr{0}} 
\frac 1{c_{\gr{N}}}    \max_{\gr{1}\imd \grn \imd k_0\pr{\gr{N}+\gr{1} }}\abs{\sum_{\gr{1}\imd\gri \imd \grn }  
X_{\gri}} .
\end{equation}
From the definition of $\kappa$, we get 
\begin{equation}
M\pr{\pr{X_{\gri}}_{\gri\in\Z^d}    }\leq \kappa k_0^{-d/2}\sup_{\gr{N}\smd\gr{1}} 
\frac 1{c_{\gr{N}}}    \max_{\gr{1}\imd \grn \imd k_0 \gr{N}  }\abs{\sum_{\gr{1}\imd\gri \imd \grn }  
X_{\gri}} .
\end{equation}
Using the fact that for all $\grn$ such that $\gr{1}\imd \grn \imd k_0 \gr{N}$, 
\begin{equation}
\abs{\sum_{\gr{1}\imd\gri \imd \grn }  X_{\gri}}\leq \max_{\gr{1}\imd \gr{n'}\imd \gr{N}}
\sum_{\gr{1}\imd\grb\imd k_0\gr{1}}\abs{\sum_{\gr{0}\imd \gr{a}\imd \gr{n'}   }    X_{k_0\gr{a}+\grb} },
\end{equation}
we conclude the proof of Lemma~\ref{lem:etape_inter_chamP-m_dep}.
\end{proof}

For all fixed $j\geq 1$, we apply Lemma~\ref{lem:etape_inter_chamP-m_dep} to 
$\widetilde{X_{\gri}}:= X_{\gri,j}$ and $k_0:= 4j+1$ in order to obtain, after having 
taken the $\mathbb L^p$-norms, 
\begin{multline}
\norm{M\pr{\pr{X_{\gri}}_{\gri\in\Z^d}    }}_p\leq 
\norm{M\pr{\pr{X_{\gri,0}}_{\gri\in\Z^d}    }}_p\\
+\kappa\sum_{j=1}^{+\infty}
\pr{4j+1}^{-d/2}\sum_{\gr{1}
\imd \gr{b}\imd \pr{4j+1}\gr{1}}  
\norm{\widetilde{M}\pr{\pr{X_{\pr{4j+1}\gra+\grb}}_{\gra\in\Z^d}    }}_p.
\end{multline}
We then conclude by applying first \eqref{eq:lien_M_tilde_M} then Theorem~\ref{thm:iid} 
for all $j\geq 1$ and $\grb\in\Z^d$ such that $\gr{1}
\imd \gr{b}\imd \pr{4j+1}\gr{1}$ to the i.i.d. random field $\pr{X_{\pr{4j+1}\gra+\grb}}_{\gra\in\Z^d}$, and 
to the i.i.d. random field $\pr{X_{\gri,0}}_{\gri\in\Z^d}$.

This ends the proof of Theorem~\ref{thm:cas_fonctionnelle_iid}.

\subsection{Proof of Theorem~\ref{thm:linear_process_arbitrary_subsets}}

First observe that by definition of linear processes, 
\begin{equation}
 \sup_{n\geq 1}\frac 1{ \sqrt{\ell_n LL\pr{\ell_n}  }    }
\abs{\sum_{\gri\in \Lambda_n }X_{\gr{i}}   }
\leq \sum_{\grj\in\Z^d}\sup_{n\geq 1}\frac 1{ \sqrt{\ell_n LL\pr{\ell_n}  }    }
\abs{\sum_{\gri\in \Lambda_n }a_{\grj}\eps_{\gr{i}-\grj}   },
\end{equation}
hence 
\begin{equation}
 \norm{\sup_{n\geq 1}\frac 1{ \sqrt{\ell_n LL\pr{\ell_n}  }    }
\abs{\sum_{\gri\in \Lambda_n }X_{\gr{i}}   }}_p
\leq \sum_{\grj\in\Z^d}\abs{a_{\grj}}\norm{\sup_{n\geq 1}\frac 1{ \sqrt{\ell_n LL\pr{\ell_n}  }    }
\abs{\sum_{\gri\in \Lambda_n-\grj }\eps_{\gr{i}}   }}_p,
\end{equation}
where $\Lambda_n-\grj$ is the set of the elements of $\Z^d$ of the form $\gri-\grj$, where 
$\gri\in\Lambda_n$. Since $\Lambda_n-\grj$ has the same number of elements as $\Lambda_n$, it 
suffices to prove Theorem~\ref{thm:linear_process_arbitrary_subsets} in the i.i.d. case.

We thus assume that the random field $\pr{X_{\gri}}_{\gri\in\Z^d}$ is i.i.d., centered and 
$\E{X_{\gr{1}}^2}=1$.
We will essentially follow the steps of the proof Theorem~2.3 in \cite{MR3322323}; nervertheless, Theorem~\ref{thm:linear_process_arbitrary_subsets} 
is not a direct consequence of this result as the sets $\Lambda_n$ may not satisfy any inclusion relation.
As in the proof of Theorem~\ref{thm:cas_fonctionnelle_iid}, it suffices to prove that for each $1<p<2$, there exists some 
positive constants $t_0$ and $c_p$ such that 
\begin{equation}\label{eq:goal_iid_sum_on_sets}
 \sup_{t\geq t_0}t^p\PP\ens{\sup_{n\geq 1}\frac 1{ \sqrt{\ell_n LL\pr{\ell_n}  }    }
\abs{\sum_{\gri\in \Lambda_n }X_{\gr{i}}   }>t}\leq c_pC\delta^{-p/2} .
\end{equation}
We will take 
\begin{equation}
 t_0:=\sqrt{\frac{2}{\pr{2-p}h\pr{1}\delta}},
\end{equation}
where $h$ is like in Proposition~\ref{prop:Freedman}. This will give that 
$\norm{\sup_{n\geq 1}\frac 1{ \sqrt{\ell_n LL\pr{\ell_n}  }    }
\abs{\sum_{\gri\in \Lambda_n }X_{\gr{i}}   }}_{p,w}\leq \pr{c_pC\delta^{-p/2}}^{1/p}+t_0$ and using 
\eqref{eq:comp_lp_weak} with $p+\eps$ will give \eqref{eq:control_fct_max_somme_sur_ens_processus_lineaire}.

In order to show \eqref{eq:goal_iid_sum_on_sets}, we use truncation and introduce the random variables 
\begin{equation}
 X_{\gri,n}:=X_{\gri}\mathbf{1}\ens{\abs{X_{\gri}}\leq at \sqrt{\frac{\ell_n }{LL\pr{\ell_n}  }    }  }
 -\E{X_{\gri}\mathbf{1}\ens{\abs{X_{\gri}}\leq at\sqrt{\frac{\ell_n }{LL\pr{\ell_n}  }    }  }},
\end{equation}
\begin{equation}
 X'_{\gri,n}:=X_{\gri}\mathbf{1}\ens{\abs{X_{\gri}}> at\sqrt{\frac{\ell_n }{LL\pr{\ell_n}  }    }  }
 -\E{X_{\gri}\mathbf{1}\ens{\abs{X_{\gri}}> at\sqrt{\frac{\ell_n }{LL\pr{\ell_n}  }    }  }},
\end{equation}
where $t\geq t_0$ is fixed and $a=\pr{2-p}h\pr{1}\delta/2$, where $h$ is like in Proposition~\ref{prop:Freedman}. Let $N:=\lfloor t^{2-p}\rfloor+1$. Then 

\begin{multline}
 \PP\ens{\sup_{n\geq 1}\frac 1{ \sqrt{\ell_n LL\pr{\ell_n}  }    }
\abs{\sum_{\gri\in \Lambda_n }X_{\gr{i}}   }>2t}\leq 
\sum_{n=1}^N\PP\ens{\frac 1{ \sqrt{\ell_n LL\pr{\ell_n}  }    }
\abs{\sum_{\gri\in \Lambda_n }X_{\gr{i}}   }>2t}\\
+\sum_{n\geq N+1}\PP\ens{\frac 1{ \sqrt{\ell_n LL\pr{\ell_n}  }    }
\abs{\sum_{\gri\in \Lambda_n }X_{\gr{i},n}   }>t}+
\sum_{n\geq N+1}\PP\ens{\frac 1{ \sqrt{\ell_n LL\pr{\ell_n}  }    }
\abs{\sum_{\gri\in \Lambda_n }X'_{\gr{i},n}   }>t}.
\end{multline}
The term $\sum_{n=1}^N\PP\ens{\frac 1{ \sqrt{\ell_n LL\pr{\ell_n}  }    }
\abs{\sum_{\gri\in \Lambda_n }X_{\gr{i}}   }>2t}$ can be controlled by 
Chebyshev's inequality, independence and the assumption of unit variance of $X_{\gri}$. We get that 
\begin{equation}
 \sum_{n=1}^N\PP\ens{\frac 1{ \sqrt{\ell_n LL\pr{\ell_n}  }    }
\abs{\sum_{\gri\in \Lambda_n }X_{\gr{i}}   }>2t}\leq \frac{1}{4t^2}N\leq 
\frac 14 t^{-p}+\frac{1}{4t^{-p}t_0^{2-p}}.
\end{equation}
In order to control $\PP\ens{\frac 1{ \sqrt{\ell_n LL\pr{\ell_n}  }    }
\abs{\sum_{\gri\in \Lambda_n }X_{\gr{i},n}   }>t}$ for $n\geq N+1$, we apply 
Proposition~\ref{prop:Freedman} in the following setting: $c=\frac{2at\sqrt{\ell_n}}{\sqrt{ LL\pr{\ell_n}  }    }$, 
$x=t\sqrt{\ell_n LL\pr{\ell_n}  }  $ and 
$y= t^2a\ell_n$. We obtain that 
\begin{equation}
 \sum_{n\geq N+1}\PP\ens{\frac 1{ \sqrt{\ell_n LL\pr{\ell_n}  }    }
\abs{\sum_{\gri\in \Lambda_n }X_{\gr{i},n}   }>t}
\leq 2 \sum_{n\geq N+1}\exp\pr{-\frac{h\pr{1}}{a}LL\pr{\ell_n  }}.
\end{equation}
From the assumption $\ell_n\geq \exp\pr{n^\delta}$, we get that 
\begin{equation}
  \sum_{n\geq N+1}\PP\ens{\frac 1{ \sqrt{\ell_n LL\pr{\ell_n}  }    }
\abs{\sum_{\gri\in \Lambda_n }X_{\gr{i},n}   }>t}
\leq 2 \sum_{n\geq N+1}\exp\pr{-\frac{h\pr{1}\delta}{a}L \pr{n  }}.
\end{equation}

One gets that the contribution 
of the previous term does not exceed $4 t^{-p}/\pr{2-p}$. It remains to control 
$\sum_{n\geq N+1}\PP\ens{\frac 1{ \sqrt{\ell_n LL\pr{\ell_n}  }    }
\abs{\sum_{\gri\in \Lambda_n }X'_{\gr{i},n}   }>t}$. To do so, we start from Markov's inequality:
\begin{equation}
 \sum_{n\geq N+1}\PP\ens{\frac 1{ \sqrt{\ell_n LL\pr{\ell_n}  }    }
\abs{\sum_{\gri\in \Lambda_n }X'_{\gr{i},n}   }>t}\leq 
\sum_{n\geq N+1}\frac{1}{t\sqrt{\ell_n LL\pr{\ell_n}  } }\sum_{\gri\in \Lambda_n }
\E{\abs{X'_{\gr{i},n}   }}
\end{equation}
and by definition of $X'_{\gr{i},n}$, we get that 
\begin{equation}
 \sum_{n\geq N+1}\PP\ens{\frac 1{ \sqrt{\ell_n LL\pr{\ell_n}  }    }
\abs{\sum_{\gri\in \Lambda_n }X'_{\gr{i},n}   }>t}\leq \frac{2}t 
\sum_{n\geq 1}b_n
\E{\abs{X_{\gr{1}  }}\mathbf{1}\ens{\abs{X_{\gr{1}}}>  atb_n   }  },
\end{equation}
where $b_n:= \frac{2^{n/2}}{\sqrt{L\pr{n}} }$. For a random variable $X$, 
\begin{equation}
 \sum_{n\geq 1}b_n\mathbf{1}\ens{\abs{X}>b_n}= \sum_{n\geq 1}b_n\sum_{k\geq n}\mathbf{1}\ens{b_k<\abs{X}\leq b_{k+1}}
 \leq C\abs{X},
\end{equation}
as $\sum_{n=1}^kb_n\leq Cb_k$. Applying this to $X=\abs{X_{\gr{1}}}/\pr{at}$ gives that 
\begin{equation}
 \sum_{n\geq N+1}\PP\ens{\frac 1{ \sqrt{\ell_n LL\pr{\ell_n}  }    }
\abs{\sum_{\gri\in \Lambda_n }X'_{\gr{i},n}   }>t}\leq \frac{4C}{\pr{2-p}\delta h\pr{1}t^pt_0^{2-p}} 
\end{equation}
and we get \eqref{eq:goal_iid_sum_on_sets} in view of the previous estimates. This ends the proof of Theorem~\ref{thm:linear_process_arbitrary_subsets}.

\subsection{Proof of Theorem~\ref{thm:cas_fonctionnelle_union_rectangles}}

From the result of the previous Subsection, we got that for an i.i.d. centered random field $\pr{Y_{\gri}}_{\gri\in\Z^d}$ 
and subsets $\Lambda_n$ of $\Z^d$ of cardinal $\ell_n$
\begin{equation}\label{eq:control_fct_max_somme_sur_ens_iid}
\norm{ 
\sup_{n\geq 1}\frac 1{ \sqrt{\ell_n LL\pr{\ell_n}  }    }
\abs{\sum_{\gri\in \Lambda_n }Y_{\gr{i}}   }
}_p\leq C_p C^{1/p}\delta^{-1/2} \norm{Y_{\gr{0}}}_2,
\end{equation}
provided that $\ell_{n+1}\geq \ell_n\geq \exp\pr{n^\delta}$ and
$\sum_{k=1}^n\sqrt{\ell_k/LL\pr{\ell_k}}\leq C\ell_n/LL\pr{\ell_n}$.
By the martingale convergence theorem, the decomposition 
$X_{\gri}=X_{\gri,0}+\sum_{j=1}^{+\infty}X_{\gri,j}$ holds almost surely where 
$X_{\gri,j}=\E{X_{\gr{i}}\mid \sigma\ens{\eps_{\gr{u}},\norm{\gr{u}-\gri}_\infty\leq 
 j   }}-\E{X_{\gr{i}}\mid \sigma\ens{\eps_{\gr{u}},\norm{\gr{u}-\gri }_\infty\leq 
 j -1  }}$ hence 
\begin{equation}\label{eq:control_fct_max_somme_sur_union_rect}
\norm{ 
\sup_{n\geq 1}\frac 1{ \sqrt{\ell_n LL\pr{\ell_n}  }    }
\abs{\sum_{\gri\in \Lambda_n }X_{\gr{i}}   }
}_p\leq \sum_{j\geq 0}\norm{ 
\sup_{n\geq 1}\frac 1{ \sqrt{\ell_n LL\pr{\ell_n}  }    }
\abs{\sum_{\gri\in \Lambda_n }X_{\gr{i},j}   }
}_p.
\end{equation}
For each fixed $j\geq 1$, $X_{\gr{i},j}$ is a function of the random variables 
$\eps_{\gru}$, where $\gru$ runs over the $\gru\in\Z^d$ such that 
$\norm{\gru-\gri}\leq j$. Although $\pr{X_{\gr{i},j}}_{\gri\in\Z^d}$ is 
not an independent random field, one can cut the summation over $\Gamma_n$ 
according to the remainder with respect to the Euclidean division 
of each coordinate by $4j+2$. Indeed, for $j\geq 1$ fixed, define the sets 
\begin{equation}
 \Gamma_n^{\gra,j}:=\ens{\gri\in \Z^d, \pr{4j+2}\gr{1}+\gra\in \Gamma_n}, \gra=\pr{a_q}_{q=1}^d, 
 0\leq a_q\leq 4j+1
\end{equation}
and the random fields $\pr{Y_{\gri,j}^{\gra}}_{\gri\in\Z}$ by 
\begin{equation}
 Y_{\gri,j}^{\gra}=X_{\pr{4j+2}\gr{i}+\gra ,j}.
\end{equation}

Then the following inequality holds:
\begin{equation}\label{eq:control_fct_max_somme_sur_union_rect_2}
 \norm{ 
\sup_{n\geq 1}\frac 1{ \sqrt{\ell_n LL\pr{\ell_n}  }    }
\abs{\sum_{\gri\in \Gamma_n }X_{\gr{i},j}   }
}_p\leq \sum_{\gr{0}\imd\gra \imd \pr{4j+1}\gr{1}}
 \norm{ 
\sup_{n\geq 1}\frac 1{ \sqrt{\ell_n LL\pr{\ell_n}  }    }
\abs{\sum_{\gri\in \Gamma_n^{\gra} }Y^{\gra}_{\gr{i},j}   }
}_p.
\end{equation}

Now, we have to show that the sets $\Gamma_n^{\gra,j}$ also satisfy the assumptions 
$\ell^{\gra,j}_{n+1}\geq \ell^{\gra,j}_n \geq \exp\pr{n^\delta_{\gra,j}}$ and
$\sum_{k=1}^n\sqrt{\ell^{\gra,j}_k/LL\pr{\ell^{\gra,j}_k}}\leq C_{\gra,j}\ell^{\gra,j}_n/LL\pr{\ell^{\gra,j}_n}$, 
where $\ell^{\gra,j}_n$ denotes the cardinal of $\Gamma_n^{\gra,j}$.
Define 
\begin{equation}
 \Gamma_n^{\gra,j}\pr{w}:=\ens{\gri\in \Z^d, \pr{4j+2}\gr{1}+\gra\in \Gamma_n\pr{w}}.
\end{equation}
By assumption, 
\begin{equation}
 \ell^{\gra,j}_n=\sum_{w=1}^{J_n}\operatorname{Card}\pr{ \Gamma_n^{\gra}\pr{w}}.
\end{equation}
Moreover, 
\begin{equation}
 \operatorname{Card}\pr{ \Gamma_n^{\gra,j}\pr{w}}=\prod_{q=1}^d\pr{
 \left\lfloor \frac{\overline{n}_q\pr{w,n}-a_q }{4j+2}\right\rfloor-\left\lfloor \frac{\underline{n}_q\pr{w,n}-a_q }{4j+2}\right\rfloor+1}
\end{equation}
hence by disjointness of $ \Gamma_n^{\gra}\pr{w}, 1\leq w\leq J_n$ and the assumption $\overline{n}_q\pr{w,n}-\underline{n}_q\pr{w,n}\geq 4$ we derive that 
\begin{equation}\label{eq:bound_card_gamma_nja}
 \frac{1}{\pr{4j+2}^d4^d}\ell_n\leq \ell^{\gra,j}_n\leq \frac{1}{\pr{4j+2}^d}\ell_n.
\end{equation}

In view of \eqref{eq:control_fct_max_somme_sur_union_rect} and \eqref{eq:control_fct_max_somme_sur_union_rect_2}, we derive that 
\begin{multline}\label{eq:control_fct_max_somme_sur_union_rect_3}
\norm{ 
\sup_{n\geq 1}\frac 1{ \sqrt{\ell_n LL\pr{\ell_n}  }    }
\abs{\sum_{\gri\in \Lambda_n }X_{\gr{i}}   }
}_p\\ 
\leq \sum_{j\geq 0}\frac{1}{\pr{4j+2}^{d/2}4^{d/2}} 
\sum_{\gr{0}\imd\gra \imd \pr{4j+1}\gr{1}}
 \norm{ 
\sup_{n\geq 1}\frac 1{ \sqrt{ \ell^{\gra,j}_n LL\pr{ \ell^{\gra,j}_n}  }    }
\abs{\sum_{\gri\in \Gamma_n^{\gra,j} }Y^{\gra}_{\gr{i},j}   }
}_p.
\end{multline}
We now apply Theorem~\ref{thm:linear_process_arbitrary_subsets} for each $j\geq 0$ and each $\gra\in\Z^d$ such that 
$\gr{0}\imd\gra\imd \pr{4j+1}\gr{1}$ in the independent setting, that is, $a_{\gr{0}}=1$ and $a_{\gri}=0$ for $\gri\neq\gr{0}$. 
In view of \eqref{eq:bound_card_gamma_nja}, for each $j\geq 0$, we may take $\delta_j:= \pr{4j+2}^{-d}4^{-d}$.
For $C_j$, we take $C4^d\pr{1+\log\pr{1+\pr{4j+1}^d}+LL\pr{e^2\pr{4j+1}^d}}$, as $\sup_{s>0}LL\pr{st}/LL\pr{s}\leq 1+\pr{1+\log t}+LL\pr{e^2t}$.
This completes the proof of Theorem~\ref{thm:cas_fonctionnelle_union_rectangles}.
\subsection{Proof of Corollary~\ref{cor:LLI_linear}}
 
In view of Corollary~\ref{cor:coeff_phys_dep}, it suffices to estimate $\delta_{d-1}\pr{\gri}$, that is, $\norm{f\pr{ \pr{\eps_{\gri -\gr{u}}}_{\gru\in \Z^d}   }-
f\pr{ \pr{\eps^*_{\gri -\gr{u}}}_{\gru\in \Z^d}   }        }_{2,d-1}$ where 
$f=g\circ h$ and 
$h\pr{\pr{x_{\gru}}_{\gru\in\Z^d}  }=\sum_{\gru\in\Z^d}a_{\gru}x_{\gru}$  if $\sum_{\gru\in\Z^d}a_{\gru}x_{\gru}$ converges 
(in the sense that $\lim_{m\to +\infty}\sum_{\substack{\gru\in\Z^d \\ \norm{\gru}_\infty\leq m } }a_{\gru}x_{\gru}$ exists)
and $0$ otherwise. Since $g$ is $\gamma$-Hölder continuous, the following inequality holds:
\begin{multline*}
\E{\varphi_{2,d-1}\pr{  f\pr{ \pr{\eps_{\gri -\gr{u}}}_{\gru\in \Z^d}   }-
f\pr{ \pr{\eps^*_{\gri -\gr{u}}}_{\gru\in \Z^d}   }   }   }\\ 
\leq 
\E{\varphi_{2,d-1}\pr{   \abs{ h\pr{ \pr{\eps_{\gri -\gr{u}}}_{\gru\in \Z^d}   }-
h\pr{ \pr{\eps^*_{\gri -\gr{u}}}_{\gru\in \Z^d}   }  }^\gamma     }   }
\end{multline*}
hence 
\begin{equation}\label{eq:estimate_diff_fct_de_processus_lineaire}
\E{\varphi_{2,d-1}\pr{  \frac{f\pr{ \pr{\eps_{\gri -\gr{u}}}_{\gru\in \Z^d}   }-
f\pr{ \pr{\eps^*_{\gri -\gr{u}}}_{\gru\in \Z^d}   }   }{\lambda}  }}\leq 
\E{
\varphi_{2,d-1}\pr{\frac{ \abs{a_{\gri}}^\gamma \abs{\eps_{\gr{0} }-
\eps'_{\gr{0} }}^\gamma    }{\lambda}}
}.
\end{equation}
Then, using the inequality $\varphi_{2,d-1}\pr{t^\gamma}\leq c_{d,\gamma}\varphi_{2\gamma,d-1}\pr{t}$, we derive that 
$\delta_{d-1}\pr{\gri}\leq K_{d,\gamma} \abs{a_{\gri}}^\gamma\norm{\eps_{\gr{0}}  }_{2\gamma,d-1}$, which ends the 
proof of Corollary~\ref{cor:LLI_linear}.

\subsection{Proof of Corollary~\ref{cor:fct_de_champs_Gaussiens_Hermite}}
 
Here again, we have to estimate $\delta_{d-1}\pr{\gri}$. We will essentially follows the 
steps given in Example~3 of \cite{MR3256190}, where a bound on the physical dependence 
measure with the $\mathbb L^p$-norm instead of the one given by $\varphi_{2,d-1}$ was obtained.

 First, according to Theorem~1.1.1 of \cite{MR2498953}, the expansions 
 \begin{equation}
 f\pr{Y_{\grj}}=\sum_{q=1}^{+\infty}c_q\pr{f}H_q\pr{Y_{\grj}},\quad 
  f\pr{Y^*_{\grj}}=\sum_{q=1}^{+\infty}c_q\pr{f}H_q\pr{Y^*_{\grj}}
 \end{equation}
hold in $\mathbb L^2$ hence after having extracted almost surely convergent subsequences 
$\pr{\sum_{q=1}^{Q_k}c_q\pr{f}H_q\pr{Y_{\grj}}}_{k\geq 1}$ and 
$\pr{\sum_{q=1}^{Q_k}c_q\pr{f}H_q\pr{Y^*_{\grj}}}_{k\geq 1}$, we derive by Fatou's lemma that 
\begin{multline}
\norm{ f\pr{Y_{\grj}}- f\pr{Y^*_{\grj}}}_{2,d-1}\leq 
\liminf_{k\to +\infty}\norm{ \sum_{q=1}^{Q_k}c_q\pr{f}\pr{H_q\pr{Y_{\grj}} -H_q\pr{Y^*_{\grj}}      }  }_{2,d-1}\\
\leq \sum_{q=1}^{+\infty} \abs{c_q\pr{f}}\norm{ H_q\pr{Y_{\grj}} -H_q\pr{Y^*_{\grj}}        }_{2,d-1}.
\end{multline}
Therefore, it suffices to prove that there exists a constant $C$ depending only on $d$ such that 
\begin{equation}\label{eq:goal_Hermite}
\norm{ H_q\pr{Y_{\grj}} -H_q\pr{Y^*_{\grj}}        }_{2,d-1}\leq C \sqrt{q! }q^{d-\frac 12}\norm{Y_{\grj}-Y^*_{\grj}   }_2.
\end{equation}
 First, as noticed in Example~3 of \cite{MR3256190}, the following inequality holds for all $q$: 
 \begin{equation}\label{eq:variance_Hermite}
 \norm{ H_q\pr{Y_{\grj}} -H_q\pr{Y^*_{\grj}}        }_{2}\leq \sqrt{q}\sqrt{q!}\norm{Y_{\grj}-Y^*_{\grj}   }_2,
 \end{equation}
because $Y_{\grj}$ and $Y^*_{\grj} $ have unit variance. By hypercontractivity properties (see \cite{MR1102015}, p. 65), 
the following inequality holds for all $p\geq 2$: 
\begin{equation}\label{eq:moments_p_Hermite}
 \norm{ H_q\pr{Y_{\grj}} -H_q\pr{Y^*_{\grj}}        }_{p}\leq \pr{p-1}^{q/2}\norm{ H_q\pr{Y_{\grj}} -H_q\pr{Y^*_{\grj}}        }_{2}.
\end{equation}
 Applying \eqref{eq:moments_p_Hermite} with $p=2+q^{-1}$, taking into account that $\pr{1+q^{-1}}^{q/2}\leq e^{1/2}$ 
 and using \eqref{eq:variance_Hermite} gives 
 \begin{equation}\label{eq:moments_2+1/q_Hermite}
 \norm{ H_q\pr{Y_{\grj}} -H_q\pr{Y^*_{\grj}}        }_{2+q^{-1}}\leq   \sqrt{e q \cdot q!}\norm{Y_{\grj}-Y^*_{\grj}   }_2.
 \end{equation}
Observe that for all $u\geq 0$, $\pr{1+\ln\pr{1+u}}^{d-1}\leq c_d\pr{u+1}$; applying this to $u=v^{1/q}$ and 
using the inequality $\ln\pr{1+v^{1/q}}\geq  \ln\pr{\pr{1+v}^{1/q}}=q^{-1}\ln\pr{1+v}$, we derive that 
\begin{equation}
\varphi_{2,d-1}\pr{v}\leq c_d q^{d-1}\pr{v^2+v^{2+1/q}}
\end{equation}
hence for each random variable $X$, the inequality $\norm{X}_{2,d-1}\leq c_d q^{d-1}\norm{X}_{2+q^{-1}}$ holds. Applying this 
estimate with $X=H_q\pr{Y_{\grj}} -H_q\pr{Y^*_{\grj}} $ and combining with \eqref{eq:moments_2+1/q_Hermite} gives 
\eqref{eq:goal_Hermite}, which ends the proof of Corollary~\ref{cor:fct_de_champs_Gaussiens_Hermite}.

\subsection{Proof of Corollary~\ref{cor:LLI_Volterra}}

 Let us fix $j\geq 1$ and denote $\Gca$ 
 the $\sigma$-algebra generated 
 by the random variables $\eps_{\gru}$, $\norm{\gru}_\infty\leq j$ and  define
 for  $\gr{s_1},\gr{s_2}\in\Z^d$ such that $\gr{s_1}\neq \gr{s_2}$ the 
 random variable  
 \begin{equation}
 Y_{\gr{s_1},\gr{s_2}}:=\E{\eps_{-\gr{s_1}}\eps_{-\gr{s_2} } \mid\Gca}.
 \end{equation}
Case~1: $\norm{\gr{s_1}}_\infty\geq j+1$ and $\norm{\gr{s_2}}_\infty\geq j+1$. 
The random variable $\eps_{-\gr{s_1},-\gr{s_2} }$ is independent of 
$\Gca$ hence $ Y_{\gr{s_1},\gr{s_2}}:=\E{\eps_{-\gr{s_1}} \eps_{-\gr{s_2} } }$. 
Since $\gr{s_1}\neq \gr{s_2}$ the random variables $\eps_{-\gr{s_1}}$ and 
$\eps_{-\gr{s_2}}$ are independent, we derive that $ Y_{\gr{s_1},\gr{s_2}}=0$. 

Case~2: $\norm{\gr{s_1}}_\infty\geq j+1$ and $\norm{\gr{s_2}}_\infty\leq j$. 
Since $\eps_{-\gr{s_2}}$ is $\Gca$-measurable and $\eps_{-\gr{s_1}}$ is independent 
of $\Gca$, it follows that $Y_{\gr{s_1},\gr{s_2}}=0$. 

Case~3: $\norm{\gr{s_2}}_\infty\geq j+1$ and $\norm{\gr{s_1}}_\infty\leq j$. Similarly 
as in case 2, $Y_{\gr{s_1},\gr{s_2}}=0$. 

Case~4:  $\norm{\gr{s_1}}_\infty\leq j$ and $\norm{\gr{s_2}}_\infty\leq j$. Then 
  $\eps_{-\gr{s_1}}$ and 
$\eps_{-\gr{s_2}}$ are both $\Gca$-measurable hence $ Y_{\gr{s_1},\gr{s_2}}=
\eps_{-\gr{s_1}}\eps_{-\gr{s_2}} $.

Therefore, 
\begin{equation}
\E{X_{\gr{0}}\mid \sigma\pr{\eps_{\gru}, \gru\in\Z^d, \norm{\gru}_\infty\leq j}}
= \sum_{\substack{\gr{s_1},\gr{s_2}\in\Z^d\\ 
\norm{\gr{s_1}}_\infty\leq j,\norm{\gr{s_2}}_\infty\leq j
}}a_{\gr{s_1},\gr{s_2}}\eps_{-\gr{s_1}} \eps_{-\gr{s_2} }
\end{equation}
 and consequently, 
\begin{multline}
X_{\gr{0},j}= 
\sum_{\substack{\gr{s_1} \in\Z^d\\ 
\norm{\gr{s_1}}_\infty\leq j-1 
}}\sum_{\substack{ \gr{s_2}\in\Z^d\\ 
  \norm{\gr{s_2}}_\infty=j,
}}
a_{\gr{s_1},\gr{s_2}}\eps_{-\gr{s_1}} \eps_{-\gr{s_2} }\\
+\sum_{\substack{\gr{s_1} \in\Z^d\\ 
  \norm{\gr{s_1}}_\infty = j
}}
\sum_{\substack{ \gr{s_2}\in\Z^d\\ 
  \norm{\gr{s_2}}_\infty\leq j-1}
}
a_{\gr{s_1},\gr{s_2}}\eps_{-\gr{s_1}} \eps_{-\gr{s_2} }+
\sum_{\substack{\gr{s_1} \in\Z^d\\ 
\norm{\gr{s_1}}_\infty=j 
}}\sum_{\substack{ \gr{s_2}\in\Z^d\\ 
  \norm{\gr{s_2}}_\infty=j
}}a_{\gr{s_1},\gr{s_2}}\eps_{-\gr{s_1}} \eps_{-\gr{s_2} }.
\end{multline} 
Let us control the $\el_{2,d-1}$-norm of the first term of the 
right hand side. 
Let $I$ be the set of the elements of $\Z^d$ whose $\ell^\infty$ 
norm is equal to $j$ and let $\tau\colon 
\ens{1,\dots,\operatorname{Card}\pr{I}}\to I$ be a bijection. Define the 
random variable
\begin{equation}
d_k:= \sum_{\substack{ \gr{s_1}\in\Z^d\\ 
  \norm{\gr{s_1}}_\infty\leq j-1
}}a_{\gr{s_1},\gr{\tau\pr{k}}}\eps_{-\gr{s_1}} \eps_{-\gr{\tau\pr{k} }}
\end{equation}
and the $\sigma$-algebras
\begin{equation}
\f_0:=\sigma\pr{\eps_{\gru},\gru\in\Z^d,\norm{\gru}_\infty\leq j-1   };
\end{equation}
\begin{equation}
\f_k:=\sigma\pr{\eps_{\gru},\gru\in\Z^d,\norm{\gru}_\infty\leq j-1   }
\vee \sigma\pr{\eps_{-\gr{\tau\pr{k'}}},1\leq k'\leq k  }.
\end{equation}
The sequence $\pr{d_k}_{1\leq k\leq  \operatorname{Card}\pr{I}}$ is 
a martingale differences sequence with respect to the filtration 
 $\pr{\f_k}_{0\leq k\leq  \operatorname{Card}\pr{I}}$. Moreover, using 
 independence between $\sum_{\substack{ \gr{s_1}\in\Z^d\\ 
  \norm{\gr{s_1}}_\infty\leq j-1
}}a_{\gr{s_1},\gr{\tau\pr{k}}}\eps_{-\gr{s_1}} 
 $ and $\eps_{-\gr{\tau\pr{k} }}$ and the inequality $\varphi_{2,d-1}\pr{uv}\leq 2^{d-1}\varphi_{2,d-1}\pr{u}\varphi_{2,d-1}\pr{v}$, we derive that 
$\norm{d_k}_{2,d-1} \leq 2^{d-1}  \norm{\eps_{\gr{0}}}_{2,d-1}
\norm{\sum_{\substack{ \gr{s_1}\in\Z^d\\ 
  \norm{\gr{s_1}}_\infty\leq j-1
}}a_{\gr{s_1},\gr{\tau\pr{k}}}\eps_{-\gr{s_1}}}_{2,d-1}$. 
Since
 \begin{equation}
 \E{d_k^2\mid \f_{k-1}}= \E{\eps_{\gr{0}}^2} 
 \pr{ \sum_{\substack{ \gr{s_1}\in\Z^d\\ 
  \norm{\gr{s_1}}_\infty\leq j-1
}}a_{\gr{s_1},\gr{\tau\pr{k}}}\eps_{-\gr{s_1}}}^2,
 \end{equation}
Corollary~\ref{cor:Burlholder_Orlicz} implies that 

\begin{multline}
\norm{ 
\sum_{\substack{\gr{s_1},\gr{s_2}\in\Z^d\\ 
\norm{\gr{s_1}}_\infty\leq j-1\\ \norm{\gr{s_2}}_\infty=j,
}}a_{\gr{s_1},\gr{s_2}}\eps_{-\gr{s_1}} \eps_{-\gr{s_2} }
}_{2,d-1}
\leq C_d  \norm{\eps_{\gr{0}}}_{2,d-1}\max_{\norm{\gr{s_2} }_\infty=j}
\norm{\sum_{\substack{ \gr{s_1}\in\Z^d\\ 
  \norm{\gr{s_1}}_\infty\leq j-1
}}a_{\gr{s_1},\gr{s_2}}\eps_{-\gr{s_1}}}_{2,d-1}\\
+C_d\norm{\eps_{\gr{0}}}_2\pr{ \sum_{\norm{\gr{s_2} }_\infty=j}
\norm{  \sum_{\substack{ \gr{s_1}\in\Z^d\\ 
  \norm{\gr{s_1}}_\infty\leq j-1
}}a_{\gr{s_1},\gr{\tau\pr{k}}}\eps_{-\gr{s_1}}^2}_{1,r}
}^{1/2},
\end{multline}
which entails, by Lemma~\ref{lem:norm_Orlicz_puissances}, that 
\begin{multline}
\norm{ \sum_{\substack{\gr{s_1} \in\Z^d\\ 
\norm{\gr{s_1}}_\infty\leq j-1 
}}
\sum_{\substack{ \gr{s_2}\in\Z^d\\ 
  \norm{\gr{s_2}}_\infty=j
}}a_{\gr{s_1},\gr{s_2}}\eps_{-\gr{s_1}} \eps_{-\gr{s_2} }
}_{2,d-1}\\
\leq C_d\norm{\eps_{\gr{0}}}_{2,d-1}\pr{
  \sum_{\norm{\gr{s_2} }_\infty=j}
\norm{   \sum_{\substack{ \gr{s_1}\in\Z^d\\ 
  \norm{\gr{s_1}}_\infty\leq j-1
}}a_{\gr{s_1},\gr{s_2}}\eps_{-\gr{s_1}} }^2_{2,r}
}^{1/2}.
\end{multline}
We conclude by applying an other time Corollary~\ref{cor:Burlholder_Orlicz}.

\subsection{Proof of Corollaries~\ref{cor:LLI_linear_sum_on_disjoint_unions_of_rectangles}, \ref{cor:fct_de_champs_Gaussiens_Hermite_sum_on_rect} and \ref{cor:LLI_Volterra_sum_on_rect}}

These corollaries follow from Corollary~\ref{cor:coeff_phys_dep_union_of_rectangles} and an estimation of the dependence 
coefficients in the same spirit as in the proof of Corollaries~\ref{cor:LLI_linear}, \ref{cor:fct_de_champs_Gaussiens_Hermite} 
and \ref{cor:LLI_Volterra_sum_on_rect}. For the sake of completeness, we give only the key steps of the proof that need 
to be modified. 

For Corollary~\ref{cor:LLI_linear_sum_on_disjoint_unions_of_rectangles}, we only need to use a 
version of \eqref{eq:estimate_diff_fct_de_processus_lineaire} with $\varphi_{2,d-1}$ replaced by 
$\varphi_{2,0}$. 

For Corollary~\ref{cor:fct_de_champs_Gaussiens_Hermite_sum_on_rect}, we can use directly \eqref{eq:variance_Hermite}.

For Corollary~\ref{cor:LLI_Volterra_sum_on_rect}, we can simply use orthogonality of the increments of a martingale difference 
sequence instead of the inequality given in Corollary~\ref{cor:Burlholder_Orlicz}.

\bigbreak 

\textbf{Acknowledgement} This research work is supported by the DFG Collaborative Research Center SFB 823 `Statistical modelling of nonlinear dynamic processes'.

The author would like to thank the referee for remarks improving the quality of the manuscript and 
for the suggestion of considering the summation on disjoin unions of rectangles.


\def\polhk\#1{\setbox0=\hbox{\#1}{{\o}oalign{\hidewidth
  \lower1.5ex\hbox{`}\hidewidth\crcr\unhbox0}}}\def\cprime{$'$}
  \def\polhk#1{\setbox0=\hbox{#1}{\ooalign{\hidewidth
  \lower1.5ex\hbox{`}\hidewidth\crcr\unhbox0}}} \def\cprime{$'$}
\providecommand{\bysame}{\leavevmode\hbox to3em{\hrulefill}\thinspace}
\providecommand{\MR}{\relax\ifhmode\unskip\space\fi MR }
\providecommand{\MRhref}[2]{%
  \href{http://www.ams.org/mathscinet-getitem?mr=#1}{#2}
}
\providecommand{\href}[2]{#2}

\end{document}